\theoremstyle{definition}
\newtheorem{definition}{Definition}[section]
\newtheorem{theorem}{Theorem}[section]
\newtheorem{corollary}{Corollary}[theorem]
\newtheorem{lemma}[theorem]{Lemma}
\newtheorem{remark}{Remark}[section]
\DeclareMathOperator{\sech}{sech}
\begin{document}

\title{Numerical Solution and Bifurcation Analysis of Nonlinear Partial Differential Equations  with Extreme Learning Machines
}


\author{
  Gianluca Fabiani\\
  Scuola Superiore Meridionale\\
  Universit\`a degli Studi di Napoli Federico II\\
  Italy\\
  \texttt{gianluca.fabiani@unina.it} \\
   \And
 Francesco Calabr\`{o} \\
 Dipartimento di Matematica e Applicazioni ``Renato Caccioppoli"\\
 Universit\`a degli Studi di Napoli ``Federico II"\\
 Italy \\
  \texttt{francesco.calabro@unina.it}
  \And
  Lucia Russo\\
  Institute of Sustainable Mobility and Energy\\
  Consiglio Nazionale delle Ricerche\\
  Italy\\
  \texttt{lucia.russo@stems.cnr.it }
  \And
  Constantinos Siettos\\
  Dipartimento di Matematica e Applicazioni ``Renato Caccioppoli"\\
  Universit\`a degli Studi di Napoli ``Federico II"\\
  Italy\\
  \texttt{constantinos.siettos@unina.it}
  }



\maketitle

\begin{abstract}
We address a new numerical scheme based on a class of machine learning methods, the so-called  Extreme Learning Machines (ELM) with both sigmoidal and radial-basis functions, for the computation of steady-state solutions and the construction of (one-dimensional) bifurcation diagrams of nonlinear partial differential equations (PDEs). For our illustrations, we considered two benchmark problems, namely (a) the one-dimensional viscous Burgers with both homogeneous (Dirichlet) and non-homogeneous boundary conditions, and, (b) the one- and two-dimensional Liouville–Bratu–Gelfand PDEs with homogeneous Dirichlet boundary conditions. For the one-dimensional Burgers and Bratu PDEs, exact analytical solutions are available and used for comparison purposes against the numerical derived solutions. Furthermore, the numerical efficiency (in terms of accuracy and size of the grid) of the proposed numerical machine-learning scheme is compared against central finite differences (FD) and Galerkin weighted-residuals finite-element (FEM) methods. We show that the proposed ELM  numerical method outperforms both FD and FEM methods for medium to large sized grids, while provides equivalent results with the FEM for low to medium sized grids; both methods (ELM and FEM) outperform the FD scheme.
\keywords{Extreme Learning Machines \and Machine Learning \and Numerical Analysis \and Nonlinear Partial Differential Equations \and Numerical Bifurcation Analysis}
\end{abstract}

\section{Introduction}

The solution of partial differential equations (PDEs) with the aid of machine learning as an alternative to conventional numerical analysis methods can been traced back in the early '90s. For example, Lagaris et al. \cite{lagaris1998artificial} presented a method based on feedforward neural networks (FNN) that can be used for the numerical solution of linear and nonlinear PDEs. The method is based on the construction of appropriate trial functions, the analytical derivation of the gradient of the error with respect to the network parameters and collocation. The training of the FNN was achieved iteratively with the quasi-Newton BFGS method. Gonzalez-Garcia et al. \cite{gonzalez1998identification} proposed a multilayer neural network scheme that resembles the Runge-Kutta integrator for the identification of dynamical systems described by nonlinear PDEs.\par
Nowadays, the exponentially increasing- over the last decades- computational power and recent theoretical advances, have allowed further developments at the intersection between machine learning and numerical analysis. In particular, on the side of the numerical solution of PDEs, the development of systematic and robust machine-learning methodologies targeting at the solution of large scale systems of nonlinear problems with steep gradients constitutes an open and challenging problem in the area. Very recently \cite{raissi2018numerical,raissi2019physics} addressed the use of numerical Gaussian Processes  and Deep Neural Networks (DNNs) with collocation to solve time-dependent non-linear PDEs circumventing the need for spatial discretization of the differential operators. The proposed approach is demonstrated through the one-dimensional nonlinear Burgers, the Schrödinger and the Allen–Cahn equations. In \cite{han2018solving}, DNNs were used to solve high-dimensional nonlinear parabolic PDEs including the Black–Scholes, the Hamilton–Jacobi–Bellman and the Allen–Cahn equation. In \cite{samaniego2020energy},  DNNs were used to approximate the solution of PDEs arising in engineering problems by exploiting the variational structure that may arise in some of these problems. In \cite{chan2019machine,gebhardt2020framework,hadash2018estimate} DNNs were used to solve high-dimensional semi-linear PDEs; the efficiency of the method was compared against other deep learning schemes. In \cite{wei2018machine}, the authors used FNN to solve modified high-dimensional diffusion equations: the training of the FNN is achieved iteratively using an unsupervised universal machine-learning solver. Most recently, in \cite{fresca2020comprehensive}, the authors have used DNN to construct non-linear reduced-order models of time-dependent parametrized PDEs.\par 
Over the last few years, extreme learning machines (ELMs) have been used as an alternative to other machine learning schemes, thus providing a good generalization at a low computational cost \cite{huang2006extreme}. The idea behind ELMs is to randomly set the values of the weights between the input and hidden layer, the biases and the parameters of the activation/transfer functions  and determine the weights between the last hidden and output layer by solving a least-squares problem. The solution of such a least-squares problem is the whole ``training" procedure; hence, no iterative training is needed for ELMs, in contrast with what happens with the other aforementioned machine learning methods. Extensions to this basic scheme include 
multilayer ELMs \cite{dai2019multilayer,huang2013,tang2015extreme} and deep ELMs \cite{tissera2016deep}. As with conventional neural networks, convolutional networks and deep learning, ELMs have been mainly used for classification purposes \cite{bai2014sparse,chaturvedi2018bayesian,chen2020unsupervised,huang2010optimization,tang2015extreme,wang2011study}.\par On the other hand, the use of ELMs for ``traditional" numerical analysis tasks and in particular  for the numerical solution of PDEs is still widely unexplored. To the best of our knowledge, the only study on the subject is that of \cite{DWIVEDI202096} where the authors however report a failure of ELMs to deal, for example, with PDEs whose solutions exhibit steep gradients. Recently, we have proposed an ELM scheme to deal with such steep gradients appearing in linear PDEs \cite{calabro2020extreme} demonstrating through several benchmark problems that the proposed approach is efficient.\par
Here, we propose a problem-independent new  numerical scheme based on ELMs for the solution of nonlinear PDEs that may exhibit sharp gradients. As nonlinear PDEs may also exhibit non-uniqueness and/or non-existence of solutions, we also show how one can use ELMs for the construction of (one-dimensional) bifurcation diagrams of PDEs. The efficiency of the proposed numerical scheme is demonstrated and discussed through two well-studied benchmark problems:  the one-dimensional viscous Burgers equation, a representative of the class of advection-diffusion problems and the one- and two-dimensional  Liouville–Bratu–Gelfand PDE, a representative of the class of reaction-diffusion problems.  
The numerical accuracy of the proposed scheme is  compared against the analytical solutions and the exact locations of the limit points that are known for the one-dimensional PDEs, but also against the corresponding numerical approximations obtained with central finite differences (FD) and Galerkin finite elements methods (FEM).


\section{Extreme Learning Machines} \label{sec:headings}
ELMs are a class of machine-learning techniques for defining functions derived by artificial neural networks (ANNs) with fixed internal weights and biases.
Thus, ELMs have the same structure of a single hidden layer FNN with $N$ neurons. Next, we report the definition of ELM functions which we denote by ${v}(\bm{x}):\mathbb{R}^d\to \mathbb{R}$. 
\begin{definition}[ELM network with single hidden layer]
\begin{itemize}
Assuming:
\item An infinitely differentiable non polynomial function $\psi$, \emph{the activation (transfer) function} for the neurons in the hidden layer.
    \item A randomly-generated matrix $A\in \mathbb{R}^{N \times d}$ containing the \emph{internal weights matrix} 
    connecting the input layer and the hidden layer.
    \item A randomly-generated vector $\bm{\beta}\in \mathbb{R}^{N} $, containing the \emph{biases} 
    in the hidden layer.
\end{itemize} 
Then, we say that ${v}$ is an ELM function with a single hidden layer, if there exists a choice of $\bm{w}\in \mathbb{R}^{N}$, (the \emph{external weights vector} 
between the hidden layer and the output layer) such that:
\begin{equation}
    {v}(\bm{x};A,\bm{\beta};\bm{w})=\sum_{j=1}^N w_{j} \psi(\bm{\alpha}_{j} \cdot \bm{x} + \beta_{j}),
    \label{eq:ELMframework}
\end{equation}
where $\bm{x}=(x_1,x_2,\dots,x_d) \in \mathbb{R}^d$ is the input vector. 
\end{definition}
We remark that the regularity assumption in the above definition is not mandatory for the approximation properties, but in our case some regularity is needed to write the collocation method, thus for this case, we  also briefly present the necessary theory.
It is well-known, that for ANNs, where $A$ and $\bm{\beta}$ are not a-priori fixed, holds the universal approximation theorem if $\psi$ is a non-polynomial function: the functional space is spanned by the basis functions $\{\psi(\bm{\alpha} \cdot \bm{x} + \beta) , \bm{\alpha}\in \mathbb{R}^d, \beta\in\mathbb{R} \}$ that is dense in $L_2$. Moreover, with some regularity assumptions on the activation function(s), the approximation holds true also for the derivatives (see e.g. Theorem 3.1 and Theorem 4.1 in \cite{pinkus1999approximation}). Besides, fixing $A$ and $\bm{\beta}$ a priori is not a limitation, because the universal approximation is still valid in the setting of ELMs (see Theorem 2 in \cite{huang2015trends}):

\begin{theorem}[Universal approximation] 
Let the coefficients $\bm{\alpha}, \beta$ in the function sequence $\{ \psi(\bm{\alpha}_{j} \cdot \bm{x} + \beta_{j})\}_{j=1}^N $ be randomly generated according to any continuous sampling distribution and call $\tilde{v}^N\in \text{span}\{ \psi(\bm{\alpha}_{j} \cdot \bm{x} + \beta_{j}) \,,\ j=1\dots N \} $ the ELM function determined by ordinary least square solution of $\| f(\bm{x})- \tilde{v}^N(\bm{x}) \| $, where $f$ is a continuous function.\\ Then, one has with probability one that $\lim_{N\to \infty} \|f-\tilde{v}^N\|= 0 $.
\end{theorem}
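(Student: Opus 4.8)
Throughout, fix a compact domain $\Omega\subset\mathbb{R}^{d}$ and work in $L^{2}(\Omega)$, which is the space in which the least-squares norm $\|\cdot\|$ naturally lives (the density theorem recalled below is stated in $C(\Omega)$, and $C(\Omega)$ is dense in $L^{2}(\Omega)$). Write $h_{\theta}(\bm{x}):=\psi(\bm{\alpha}\cdot\bm{x}+\beta)$ for a parameter $\theta=(\bm{\alpha},\beta)\in\mathbb{R}^{d+1}$, let $\theta_{1},\theta_{2},\dots$ be the i.i.d.\ sampled parameters, put $H_{N}:=\operatorname{span}\{h_{\theta_{1}},\dots,h_{\theta_{N}}\}$ and $e_{N}:=f-\tilde v^{N}$, where $\tilde v^{N}$ is the best $L^{2}$ approximation of $f$ from $H_{N}$. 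Since $H_{1}\subseteq H_{2}\subseteq\cdots$, the sequence $\|e_{N}\|$ is nonincreasing, hence converges to some $r\ge 0$, and $\tilde v^{N}\to P_{H_{\infty}}f$ with $H_{\infty}:=\overline{\bigcup_{N}H_{N}}$. The assertion therefore reduces to showing that $r=0$ almost surely, and this would follow if we prove that, almost surely, $H_{\infty}=L^{2}(\Omega)$ --- i.e.\ the randomly drawn features already span a dense subspace.

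Two ingredients would enter. First, the classical universal approximation theorem for non-polynomial activations (Leshno et al.\ / Pinkus, recalled above; see \cite{pinkus1999approximation}) says that the full dictionary $\mathcal{D}:=\{h_{\theta}:\theta\in\mathbb{R}^{d+1}\}$ has dense linear span in $C(\Omega)$, hence in $L^{2}(\Omega)$; so it suffices to show that $H_{\infty}$ contains (a dense subset of) $\mathcal{D}$ almost surely. Second, a continuity/positivity remark: for an infinitely differentiable non-polynomial $\psi$ and compact $\Omega$, the map $\theta\mapsto h_{\theta}$ is continuous from $\mathbb{R}^{d+1}$ into $L^{2}(\Omega)$, so for every $\theta_{0}$ in the support of the sampling law and every $\varepsilon>0$ the set $\{\theta:\|h_{\theta}-h_{\theta_{0}}\|<\varepsilon\}$ contains a neighbourhood of $\theta_{0}$ and therefore has strictly positive probability.

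With these in hand, the argument would run as follows. Fix a countable set $\{\theta^{(k)}\}_{k\ge 1}$ dense in the support of the sampling law. For fixed $k$ and $m\in\mathbb{N}$, the events $\{\|h_{\theta_{j}}-h_{\theta^{(k)}}\|<1/m\}$, $j\ge 1$, are independent with a common positive probability, so the second Borel--Cantelli lemma gives infinitely many occurrences; letting $m\to\infty$, with probability one $\inf_{j}\|h_{\theta_{j}}-h_{\theta^{(k)}}\|=0$, i.e.\ $h_{\theta^{(k)}}\in\overline{\{h_{\theta_{j}}:j\ge 1\}}\subseteq H_{\infty}$. Intersecting these countably many probability-one events, almost surely $H_{\infty}$ contains every $h_{\theta^{(k)}}$, hence --- using continuity of $\theta\mapsto h_{\theta}$ and density of $\{\theta^{(k)}\}$ in the support --- it contains $\{h_{\theta}:\theta\in\operatorname{supp}\}$ and its closed span. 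When that support is rich enough for the density theorem to apply to it --- in particular when it is all of $\mathbb{R}^{d+1}$, as for the Gaussian or broad uniform laws used in practice --- the closed span is $L^{2}(\Omega)$, so $H_{\infty}=L^{2}(\Omega)$, $r=0$ almost surely, and $\|f-\tilde v^{N}\|\to 0$ with probability one.

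The main obstacle is this quantifier management. The tempting one-line proof --- ``a freshly sampled node is non-orthogonal to the current residual with probability one, so the residual strictly decreases'' --- is circular, since the residual depends on the nodes already drawn and hence the null event to be controlled is itself random; the cure is to freeze a countable dense family of \emph{target} parameters before invoking independence and Borel--Cantelli, which is legitimate because $L^{2}(\Omega)$ is separable. The residual technical items --- that $\theta\mapsto h_{\theta}$ is genuinely continuous into $L^{2}(\Omega)$, and that the density theorem still holds when $\theta$ is restricted to the support of the sampling law (automatic when that support is $\mathbb{R}^{d+1}$, an extra hypothesis otherwise) --- are precisely where the assumptions ``$\psi$ infinitely differentiable and non-polynomial'' and ``continuous sampling distribution'' are used.
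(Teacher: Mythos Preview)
The paper does not supply its own proof of this theorem: it is stated and immediately attributed to Theorem~2 in \cite{huang2015trends}. There is therefore no in-paper argument against which to compare your proposal.

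Your route is nonetheless different from Huang's. Huang argues incrementally: conditionally on $\theta_1,\dots,\theta_N$ (and hence on $e_N$), the fresh draw $\theta_{N+1}$ is independent, and one shows that $\{\theta:\langle h_\theta,e_N\rangle=0\}$ has Lebesgue measure zero whenever $e_N\neq 0$, by observing that $\theta\mapsto\langle h_\theta,e_N\rangle$ is real-analytic (for sigmoid or Gaussian $\psi$) and not identically zero (otherwise $e_N$ would be orthogonal to the full dictionary, contradicting the classical density theorem). Your dismissal of this line as ``circular'' is too hasty --- the conditioning is legitimate --- though you are right that the passage from ``strictly decreasing'' to ``limit zero'' needs more care than is sometimes given.

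Your Borel--Cantelli construction is clean on that last point, but it buys the cleanliness at the cost of the extra hypothesis you yourself flag: you need the support of the sampling law to be all of $\mathbb{R}^{d+1}$, or at least large enough for the Leshno--Pinkus density theorem to hold when restricted to it. The statement as written allows \emph{any} continuous sampling distribution, including one supported on a bounded box, and the analyticity argument genuinely covers that case (the zero set of a nonzero real-analytic function has Lebesgue measure zero globally, hence measure zero under any absolutely continuous law, regardless of its support). So your proposal, while internally coherent, proves a strictly weaker statement than the one asserted.
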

We remark that in the ANN framework, the classical way is to optimize the parameters of the network (internal and external weights and biases) iteratively, e.g. by stochastic gradient descent algorithms that have a high computational cost and don't ensure a global but only local convergence.
On the other hand, ELM networks are advantageous because the solution of an interpolation problem leads to a system of linear equations, where the only unknowns are the external weights $\bm{w}$. For example, consider $M$ points $\bm{x}_i$ such that $y_i=v(\bm{x}_i)$ for $i=1,\dots,M$. In the ELM framework (\ref{eq:ELMframework}) the interpolation problem becomes:
\begin{equation*}
    \sum_{j=1}^N w_j \psi_j(\bm{x}_i)=y_i, \qquad i=1,\dots,M
\end{equation*}
where $N$ is the number of neurons and $\psi_j(\bm{x})$ is used to denote $\psi(\bm{\alpha}_j \cdot \bm{x} + \beta_j)$. Thus, this is a system of $M$ equations and $N$ unknowns that in a matrix form can be we written as:
\begin{equation}\label{ESSE}
    S\bm{w}=\bm{y},
\end{equation}
where $\bm{y}=(y_1,\dots,y_M) \in \mathbb{R}^M$ and $S \in \mathbb{R}^{M \times N}$ is the matrix with elements $(S)_{ij}=\psi_j(\bm{x}_i)$.
If the problem is square ($N=M$) and the parameters $\bm{\alpha}_j$ and $\beta_j$ are chosen randomly, it can be proved that the matrix $S$ is invertible with probability 1 (see i.e. Theorem 1 \cite{huang2015trends}) and so, there is a unique solution, than can be numerically found; if one has to deal with an ill-conditioned matrix, one can still attempt to find a numerically robust solution by applying established numerical analysis methods suitable for such a case  (e.g. by constructing the Moore-Penrose pseudoinverse using QR factorization or SVD). If the problem is under-determined ($N>M$), the linear system has (infinite) many solutions and can be solved by applying regularization in order to pick the solution with e.g. the minimal $L_2$ norm. Such an approach provides the best solution to the optimization problem related to the magnitude of the calculated weights (see \cite{huang2011extreme}).\par
Thus, in ELM networks, one has to choose the type of the activation/transfer function and the values of the internal weights and biases. Since the only limitation is that $\psi$ is a non-polynomial function, there are infinitely many choices. The most common choice are the sigmoidal functions (SF) (also referred as ridge functions or plane waves) and the radial basis functions (RBF) \cite{asprone2010particle,pinkus1999approximation}.

Below, we describe the construction procedure and main features of the proposed ELM scheme, based on these two transfer functions. In the case of the logistic sigmoid transfer function this investigation was made in our work for one-dimensional linear PDEs \cite{calabro2020extreme}. Here, we report the fundamental arguments and we extend them to include RBFs and two-dimensional nonlinear problems.\par 
\subsection{ELM with sigmoidal functions}
For the SF case, we select the logistic sigmoid, that is defined by
\begin{equation}
    \psi_j(\bm{x}) \equiv \sigma_j(\bm{x})=\frac{1}{1+\text{exp}(-\bm{\alpha}_j\cdot \bm{x}-\beta_j)}.
    \label{eq:logisticSF}
\end{equation}
For this function, it is straightforward to compute the derivatives. In particular the derivatives with respect to the $x_k$ component are given by:
\begin{equation}\label{eq:der:SF}
\begin{split}
    \frac{\partial}{\partial x_k}\sigma_j(\bm{x})&= \alpha_{j,k}\frac{\text{exp}(z_j)}{(1+\text{exp}(z_j))^2},\\
    \frac{\partial^2}{\partial x_k^2}\sigma_j(\bm{x})&= \alpha_{j,k}^2\frac{\text{exp}(z_j) \cdot (\text{exp}(z_j)-1)}{(1+\text{exp}(z_j))^3},
\end{split}
\end{equation}
where $z_j=\bm{\alpha}_j \cdot \bm{x}+\beta_j$.
\\
A crucial point in the ELM framework is how to fix the values of the internal weights and 
biases in a proper way. Indeed, despite the fact that theoretically any random choice should be good enough, in practice, it is convenient to define an appropriate range of values for the parameters $\alpha_{j,k}$ and $\beta_{j}$ that are strictly related to the selected activation function.  For the one-dimensional case, $\sigma_j$ is a monotonic function such that:
\begin{equation*}
\begin{split}
    \text{ $\alpha_j>0$} & \Rightarrow \qquad \lim_{x\rightarrow +\infty} \sigma_j(x)=1, \qquad \lim_{x\rightarrow -\infty} \sigma_j(x)=0\\
    \text{ $\alpha_j<0$} & \Rightarrow \qquad \lim_{x\rightarrow +\infty} \sigma_j(x)=0, \qquad \lim_{x\rightarrow -\infty} \sigma_j(x)=1.
\end{split}
\end{equation*}
This function has a inflection point, that we call \emph{center} $c_j$ defined by the following property:
\begin{equation}
    \sigma_j(\alpha_j c_j + \beta_j)=\frac{1}{2}.
    \label{eq:cond_center}
\end{equation}
Now since $\sigma(0)=1/2$, the following relation between parameters holds:
\begin{equation*}
    c_j=-\frac{\beta_j}{\alpha_j}.
\end{equation*}
Finally, $\sigma_j$ has a steep transition that is governed by the amplitude of $\alpha_j$: if $|\alpha_j|\rightarrow +\infty$, then $\sigma_j$ approximates the Heaviside function, while if $|\alpha_j|\rightarrow 0$, then $\sigma_j$ becomes a constant function. Now, since in the ELM framework these parameters are fixed a priori, what one needs to avoid is to have some function that can be ``useless"\footnote{
In Huang \cite{Huang} it is suggested to take  in $I=[-1,1]$ the $\alpha_j$  randomly generated in the interval $[-1,1]$ and $\beta_j$ randomly generated in $[0, 1]$. This construction leads to functions that are not well suited for our purposes: ad example if $ \alpha_j=0.1$ and $\beta_j=0.9$, the center is $c_j=-9$. Moreover if $\alpha_j$ is small, the function $\phi_j$  is very similar to a  constant function in $[-1,1]$, therefore this function is useless for our purposes.} in the domain, say $I=[a,b]$.\\
Therefore, for the one-dimensional case, our suggestion is to chose $\alpha_{j}$ uniformly distributed as:
\begin{equation*}
    \alpha_{j} \sim \mathcal{U}\biggl(-\frac{N-55}{10|I|},\frac{N+35}{10|I|}\biggr),
    \label{eq:alphaSF1D}
\end{equation*}
where $N$ is the number of neurons in the the hidden layer and $|I|=b-a$ is the domain length.
Moreover, we also suggest to avoid too small in module coefficients $a_{j}$ by setting:
\begin{equation*}
    |\alpha_{j}|>\frac{1}{2|I|}.
    \label{eq:alphaSF1D_b}
\end{equation*}
Then, for the centers $c_j$, we select equispaced points in the domain $I$, that are given by imposing the $\beta_j$s to be:
\begin{equation*}
    \beta_j=-\alpha_j \cdot c_j.
    \label{eq:betaSF1D}
\end{equation*}
In the two-dimensional case, we denote as $\bm{x}=(x_1,x_2) \in \mathbb{R}^2$ the input and  $A \in \mathbb{R}^{N \times 2}$ the matrix with rows $\bm{\alpha}_j=(\alpha_{j,1},\alpha_{j,2})$. Then, the condition \eqref{eq:cond_center} becomes:
\begin{equation*}
    \sigma_j(x,y)=\sigma(\alpha_{j,1} x_1+\alpha_{j,2} x_2+\beta_j)=\frac{1}{2}
\end{equation*}
So, now we have:
\begin{equation*}
    s \equiv x_2=-\frac{\alpha_{j,1}}{\alpha_{j,2}}x_1-\frac{\beta_j}{\alpha_{j,2}},
\end{equation*}
where $s$ is a straight line of inflection points that we call \emph{central direction}. As the direction parallel to the central direction $\sigma_j$ is constant, while the orthogonal direction to $s$, the sigmoid $\sigma_j$ is exactly the one-dimensional logistic sigmoid.
So considering one point $\bm{c}_j=(c_{j,1},c_{j,2})$ of the straight line $s$, we get the following relation between parameters:
\begin{equation*}
    \beta_j=-\alpha_{j,1}\cdot c_{j,1}-\alpha_{j,2}\cdot c_{j,2}.
    \label{eq:betaSF2D}
\end{equation*}
Now, the difference with the one-dimensional case is the fact that in a domain $I^2 =[a,b]^2$ discretized by a grid of $n \times n$ points, the number of neurons $N=n^2$ grows quadratically, while the distance between two adjacent points decreases linearly, i.e. is given by $|I|/(n-1)$.
Thus, for the two-dimensional case, we take $\alpha_{j,k}$ uniformly distributed as:
\begin{equation*}
    \alpha_{j,k} \sim \mathcal{U}\biggl(-\frac{\sqrt{N}-60}{20|I|},\frac{\sqrt{N}+40}{20|I|}\biggr), \qquad k=1,2
    \label{eq:alphaSF2D}
\end{equation*}
where $N$ is the number of neuron in the network and $|I|=b-a$.

\subsection{ELM with radial basis functions}
Here, for the RBF case, we select the Gaussian kernel, that is defined as follows:
\begin{equation}
    \psi_j(\bm{x}) \equiv \varphi_j(\bm{x})=\text{exp}(-\varepsilon_j^2||\bm{x}-\bm{c}_j||_2^2)=\text{exp}\biggl(-\varepsilon_j^2 \sum_{k=1}^d(x_k-c_{j,k})^2\biggr),
    \label{eq:gaussianRBF}
\end{equation}
where $\bm{c}_j \in \mathbb{R}^d$ is the center point and $\varepsilon_j \in \mathbb{R}$ is the inverse of the standard deviation.
For such functions, we have:
\begin{equation}\label{eq:der:RBF}
\begin{split}
    \frac{\partial}{\partial x_k}\varphi_j(\bm{x})&=-2 \varepsilon_j^2(x_k-c_{j,k}) \text{exp}(-\varepsilon^2_j r_j^2),\\
    \frac{\partial^2}{\partial x_k^2}\varphi_j(\bm{x})&= -2 \varepsilon_j^2 (1-2\varepsilon^2_j (x_k-c_{j,k})^2)\text{exp}(-\varepsilon^2_j r_j^2),
\end{split}
\end{equation}
where $r_j=||\bm{x}-\bm{c}_j||_2$.
In all the directions, the Gaussian kernel is a classical bell function such that:
\begin{equation*}
    \begin{split}
        \lim_{\|\bm{x}-\bm{c}_j\| \rightarrow + \infty} \phi_j(\bm{x})=0, \qquad \phi_j(\bm{c}_j)=1.
    \end{split}
\end{equation*}
Moreover, the parameter $\varepsilon_j^2$ controls the steepness of the amplitude of the bell function: if $\varepsilon_j\rightarrow + \infty$, then $\phi_j$ approximates the Dirac function, while if $\varepsilon \rightarrow 0$, $\phi_j$ approximates a constant function. Thus, in the case of RBFs one can relate the role of $\varepsilon_j$  to the role of $\alpha_{j,k}$ for the case of SF. For RBFs, it is well known that the center has to be chosen as a point internal to the domain and also more preferable to be exactly a grid point, while the steepness parameter $\varepsilon$ is usually chosen to be the same for each function. Here, since we are embedding RBFs in the ELM framework, we take randomly the center $\bm{c}_j$ and the steepness parameter $\varepsilon_j$ in order to have more variability in the functional space. Thus, as for the SF case, we set the parameters $\varepsilon^2_j$ random uniformly distributed as:
\begin{equation*}
    \varepsilon_{j}^2 \sim \mathcal{U}\biggl(\frac{1}{|I|},\frac{N+65}{15|I|}\biggr),
    \label{eq:epsilonRBF1D}
\end{equation*}
where $N$ denotes the number of neurons in the hidden layer and $|I|=b-a$ is the domain length; for the centers $c_j$, we select equispaced points in the domain.
Besides, note that for the RBF case, it is trivial to extend the above into the multidimensional case, since $\varphi_j$ is already expressed with respect to the center. For the two-dimensional case, we do the same reasoning as for the SF taking:
\begin{equation*}
    \varepsilon_{j}^2 \sim \mathcal{U}\biggl(\frac{1}{2|I|},\frac{\sqrt{N}+50}{30|I|}\biggr).
    \label{eq:epsilonRBF2D}
\end{equation*}

\section{Numerical Bifurcation Analysis of Nonlinear Partial Differential Equations with Extreme Learning Machines}\label{Sect:sez2}

In this section, we introduce the general setting for the numerical solution and bifurcation analysis of nonlinear PDEs with ELMs based on basic numerical analysis concepts and tools (see e.g. \cite{brezzi1982finite,chan1982arc,cliffe2000numerical,glowinski1985continuation,quarteroni2008numerical}).
Let's start from a nonlinear PDE of the general form: 
\begin{equation}
L u= f(u,\lambda) \mbox{ in } \Omega,
\label{eq3}
\end{equation}
with boundary conditions:
\begin{equation}
B_l u =g_l, \mbox{ in } \partial \Omega_l\,,\ l=1,2,\dots,m \,,\ \label{eq4}
\end{equation}
where $L$ is the partial differential operator acting on $u$, $f(u,\lambda)$ is a nonlinear function of $u$ and ${\lambda} \in \mathbb{R}^{p}$ is the vector of model parameters, and $\{\partial \Omega_l\}_l$  denotes a partition of the boundary.

\par
A numerical solution $\tilde{u}=\tilde{u}(\lambda)$ to the above problem at particular values of the parameters $\lambda$  is typically found iteratively by applying e.g. Newton-Raphson or matrix-free Krylov-subspace methods (Newton-GMRES) (see e.g. \cite{kelley_2018}) on a finite system of $M$ nonlinear algebraic equations. In general, these equations reflect some zero residual condition, or exactness equation, and thus the numerical solution that is sought is the optimal solution with respect to the condition in the finite dimensional space. Assuming that $\tilde{u}$ is fixed via the degrees of freedom $\bm{w}\in \mathbf{R}^N$ - we  use the notation $\tilde{u}=\tilde{u}(\mathbf{w}) $ - then these degrees of freedom are sought by solving: 
\begin{equation}
F_{k} (w_{1},w_{2}, \dots w_{j} \dots w_{N};\lambda) =0\,,\ k=1,2,...M \ .
\label{Res_Eq}
\end{equation}
Many methods for the numerical solution of Eq. (\ref{eq3}), \eqref{eq4} are written in the above  form after the application of an approximation and discretization technique such as Finite Differences (FD), Finite Elements (FE) and Spectral Expansion (SE), as we detail next. 
\\
The system of $M$ algebraic equations (\ref{Res_Eq}) is solved iteratively (e.g. by Newton's method), that is by solving until a convergence criterion is satisfied, the following linearized system:
\begin{equation}
    \nabla_{\bm{w}} \bm{F}(\bm{w}^{(n)},\lambda) \cdot d\bm{w}^{(n)}=-\bm{F}(\bm{w}^{(n)},\lambda), \quad \bm{w}^{(n+1)}=\bm{w}^{(n)}+d\bm{w}^{(n)}.
    \label{ELMiter}
\end{equation}
$\nabla_{\bm{w}} \bm{F}$ is the Jacobian matrix:
\begin{equation}
\nabla_{\bm{w}} \bm{F} (\bm{w}^{(n)},\lambda)= \left[\frac{\partial F_k}{\partial w_j}\right]_{\big|({\bm{w}^{(n)},\lambda)}}=
\begin{bmatrix}
\frac{\partial F_1}{\partial w_1} & \frac{\partial F_1}{\partial w_2} & \dots & \frac{\partial F_1}{\partial w_j} & \dots &\partial\frac{F_1}{\partial w_N}\\
\frac{\partial F_2}{\partial w_1} & \frac{\partial F_2}{\partial w_2} & \dots & \frac{\partial F_2}{\partial w_j} & \dots &\frac{\partial F_2}{\partial w_N}\\
\vdots  &\vdots & \ddots & \vdots & \ddots &\vdots\\
\frac{\partial F_k}{\partial w_1} & \frac{\partial F_k}{\partial w_2} & \dots & \frac{\partial F_k}{\partial w_j} & \dots &\frac{\partial F_k}{\partial w_N}\\
\vdots  &\vdots & \ddots & \vdots & \ddots &\vdots\\
\frac{\partial F_M}{\partial w_1} & \frac{\partial F_M}{\partial w_2} & \dots & \frac{\partial F_M}{\partial w_j} & \dots &\frac{\partial F_M}{\partial w_N}
\end{bmatrix}_{\big|{(\bm{w}^{(n)},\lambda)}}
\label{jacelm}
\end{equation}
If the system is not square (i.e. when $M\neq N$), then at each iteration, one would perform e.g. QR-factorization of the Jacobian matrix
\begin{equation}
    \nabla_{\bm{w}} \bm{F} (\bm{w}^{(n)},\lambda)=R^T Q^T=\begin{bmatrix}
    R_{1}^T & 0
    \end{bmatrix}
    \begin{bmatrix} Q_1^T \\ Q_2^T \end{bmatrix},
    \label{eq:QR_decomposition}
\end{equation}
where $Q \in \mathbb{R}^{N\times N}$ is an orthogonal matrix and $R \in \mathbb{R}^{N \times M}$ is an upper triangular matrix.
Then, the solution of Eq.(\ref{ELMiter}) is given by:
\begin{equation*}
    d\bm{w}^{(n)}=-Q_1 R_1^{-1} \cdot \bm{F} (\bm{w}^{(n)},\lambda).
\end{equation*}
Branches of solutions in the parameter space past critical points on which the Jacobian matrix $\nabla F$ with elements $\dfrac{\partial F_k}{\partial w_{j}}$  becomes singular can be traced with the aid of numerical bifurcation analysis theory (see e.g. \cite{dhooge2008new,doedel2012numerical,doedel2007auto,govaerts2000numerical,krauskopf2007numerical,kuznetsov2013elements,schilder2017continuation}).
For example, solution branches past saddle-node bifurcations (limit/turning points) can be traced by applying the so called ``pseudo" arc-length continuation method \cite{chan1982arc}. This involves the parametrization of both $\tilde{u}(\bm{w})$ and $\lambda$ by the arc-length $s$ on the solution branch. The solution is sought in terms of both $\tilde{u}(\bm{w};s)$ and $\lambda(s)$ in an iterative manner, by solving until convergence the following augmented system:
\begin{equation}
\begin{bmatrix}
\nabla_{\bm{w}} \bm{F} & \nabla_{\lambda }\bm{F}\\
\nabla_{\bm{w}} {N} & \nabla_{\lambda} {N}
\end{bmatrix} \cdot\begin{bmatrix}d\bm{w}^{(n)}(s)\\d\lambda^{(n)}(s) \end{bmatrix} =-\begin{bmatrix}\bm{F}(\bm{w}^{(n)}(s),\lambda(s))\\N(\tilde{u}(\bm{w}^{(n)};s),\lambda^{(n)}(s))\end{bmatrix},
\label{augmentedarclength}
\end{equation}
where 
\begin{equation*}
\nabla_{\lambda }\bm{F}=\begin{bmatrix}
\frac{\partial F_1}{\partial \lambda} & \frac{\partial F_2}{\partial \lambda} & \dots & \frac{F_M}{\partial \lambda}
\end{bmatrix}^T,
\end{equation*}
and
\begin{equation*}
    \begin{split}
N(\tilde{u}(\bm{w}^{(n)};s),\lambda^{(n)}(s))= & \\ (\tilde{u}(\bm{w}^{(n)};s)-&\tilde{u}(\bm{w};s)_{-2})^T\cdot \frac{(\tilde{u}(\bm{w})_{-2}-\tilde{u}(\bm{w})_{-1})}{ds}+\\ (\lambda^{(n)}(s)-\lambda_{-1})&\cdot \frac{(\lambda_{-2}-\lambda_{-1})}{ds}-ds,
    \end{split}
\end{equation*}
is one of the choices for the so-called ``pseudo arc-length condition" (for more details see e.g. \cite{chan1982arc,doedel2012numerical,glowinski1985continuation,govaerts2000numerical,kuznetsov2013elements}); $\tilde{u}(\bm{w})_{-2}$ and  $\tilde{u}(\bm{w})_{-1}$ are two already found consequent solutions for $\lambda_{-2}$ and $\lambda_{-1}$, respectively and $ds$ is the arc-length step for which a new solution around the previous solution $(\tilde{u}(\bm{w})_{-2},\lambda_{-2})$ along the arc-length of the solution branch is being sought. 

\subsection{Finite Differences and Finite Elements cases: the application of Newton's method}
In FD methods, one aims to find the values of the solution per se (i.e. $u_{j}= w_{j}$) at a finite number of nodes within the domain. The operator in the differential problem \eqref{eq3} and the boundary conditions \eqref{eq4} are approximated by means of some finite difference operator: $L^h\approx L\,;\ B^h_l\approx B_l$: the finite operator revels in some linear combination of the function evaluations for the differential part, while keeping non-linear requirement to be satisfied due to the presence of nonlinearity. Then, approximated equations are collocated in internal and boundary points $\bm{x}_k$ giving equations that can be written as residual equations \eqref{Res_Eq}. \par
With FE and SE methods, the aim is to find the coefficients of a properly chosen basis function expansion of the solution within the domain such that the boundary conditions are satisfied precisely. 
In the Galerkin-FEM with Lagrangian basis (see e.g. \cite{olson1991efficient,quarteroni2008numerical}), the discrete counterpart seeks for a solution of Eq. \eqref{eq3}-\eqref{eq4} in $N$ points $x_j$ of the domain $\Omega$ according to:
\begin{equation}
u=\sum_{j=1}^{N} w_{j} \phi_{j},
\label{eq5}
\end{equation}
where the basis functions $\phi_{j}$ are defined so that they satisfy the completeness requirement and are such that $\phi_{j}(x_k)=\delta_{jk}$. This, again with the choice of nodal variables to be the function approximation at the points, gives that $u(x_j)=w_j$ are exactly the degrees of freedom for the method.  
The scheme can be written as the satisfaction of the zero for the weighted residuals $R_k, k=1,2,\dots N$ defined as:
\begin{equation}
R_k=\int_{\Omega} (L u- f(u,\lambda)) \phi_k\, d\Omega +\sum_{l=1}^{m} \int_{\partial \Omega_k} (B_k u -g_l) \phi_l\, d\sigma
\label{eq6}
\end{equation}
where the weighting functions $\phi_i$ are the same basis functions used in Eq. \eqref{eq5} for the approximation of $u$.
The above constitutes a nonlinear system of $N$ algebraic equations that for a given set of values for $\lambda$ are solved by Newton-Raphson, thus solving until convergence the following linearized system seen in equation \eqref{ELMiter}, where $R_k$ plays the role of $F_k$.

Notice that the border rows and columns of the Jacobian matrix (\ref{jacelm}) are appropriately changed so that Eq. \eqref{ELMiter} satisfy the boundary conditions. Due to the construction of the basis functions, the Jacobian matrix is sparse, thus allowing the significant reduction of the computation cost for the solution of \eqref{ELMiter} at each Newton's iteration.

\subsection{Extreme Learning Machine Collocation: the application of Newton's method}
In an analogous manner to FE methods, Extreme Learning Machines aim at solving the problem \eqref{eq3}-\eqref{eq4}, using an approximation $\tilde{u}_N$ of $u$ with $N$ neurons as an ansatz. The difference is that, similarly to FD methods, the equations are constructed by collocating the solution on $M_{\Omega}$ points $x_i \in \Omega$ and $M_l$ points $x_k \in \partial \Omega_l$, where $\Omega_l$ are the parts of the boundary where boundary conditions are posed, see e.g. \cite{auricchio2012isogeometric,quarteroni2008numerical}:
\begin{equation*}
\begin{split}
    L\tilde{u}_N(\bm{x}_i;\bm{w})&=f(\tilde{u}_N(\bm{x}_i;\bm{w}),\lambda), \quad i=1,\dots,M_{\Omega}\\
    B_l\tilde{u}_N(\bm{x}_k;\bm{w})&=g_l(\bm{x}_k), \quad k=1,\dots,M_{l}, \quad l=1,\dots,m.
\end{split}
\end{equation*}
Then, if we denote $M=M_{\Omega}+\sum_{l=1}^m M_l$, we have a system of $M$ nonlinear equations with $N$ unknowns that can be rewritten in a compact way as:
\begin{equation*}
    F_k(\bm{w},\lambda)=0, \quad k=1,\dots,M
\end{equation*}
where for $k=1,\dots,M_{\Omega}$, we have:
\begin{equation*}
    F_k(\bm{w},\lambda)=L\biggl( \sum_{i=1}^N w_j \psi(\bm{\alpha}_j \cdot \bm{x}_i+\beta_j)\biggr)-f\biggl(\sum_{i=1}^N w_j \psi(\bm{\alpha}_j \cdot \bm{x}_i+\beta_j) \biggr)=0,
\end{equation*}
while for the $l$-th boundary condition, for $k=1,\dots,M_l$ we have:
\begin{equation*}
    F_k(\bm{w},\lambda)=B_l\biggl( \sum_{i=1}^N w_j \psi(\bm{\alpha}_j \cdot \bm{x}_i+\beta_j)\biggr)-g\biggl(\sum_{i=1}^N w_j \psi(\bm{\alpha}_j \cdot \bm{x}_i+\beta_j) \biggr)=0.
    \label{ELMalgebraic}
\end{equation*}
At this system of non-linear algebraic equations, here we apply Newton's method \eqref{ELMiter}. Notice that the application of the method requires the explicit knowledge of the derivatives of the functions $\psi$; in the ELM case as described, we have explicit formulae for these (see Eq. \eqref{eq:der:SF}, \eqref{eq:der:RBF}).
\begin{remark}
In our case, Newton's method is applied to non-squared systems. When the rank of the Jacobian is small, here we have chosen to solve the problem with the use of Moore–Penrose pseudo inverse of $\nabla_{\bm{w}} F$ computed by the SVD decomposition; as discussed above, another choice would be $QR$-decomposition (\ref{eq:QR_decomposition}).  This means that we cut off all the eigenvectors correlated to small eigenvalues\footnote{The usual algorithm implemented in Matlab is that any singular value less than a tolerance is treated as zero: by default, this tolerance is set to max(size($A$)) * eps(norm($A$))}, so:
\begin{equation*}
    \nabla_{\bm{w}}\bm{F}=U\Sigma V^T, \qquad (\nabla_{\bm{w}}\bm{F})^{+}=V\Sigma^{+}U^T,
\end{equation*}
where $U \in \mathbb{R}^{M\times M}$ and $V \in \mathbb{R}^{N\times N}$ are the unitary matrices of left and right eigenvectors respectively, and $\Sigma \in \mathbb{R}^{M \times N}$ is the diagonal matrix of singular values. Finally, we can select $q\le rank(\nabla F)$ to get
\begin{equation}
    \nabla_{\bm{w}}\bm{F}=U_q\Sigma_q V_q^T, \qquad (\nabla_{\bm{w}}\bm{F})^{+}=V_q\Sigma_q^{+}U_q^T,
    \label{eq:Moore-Penrose_PseudoInv}
\end{equation}
where $U_q \in \mathbb{R}^{M\times q}$ and $V \in \mathbb{R}^{N\times q}$ and $\Sigma_q \in \mathbb{R}^{q \times q}$.
Thus, the solution of Eq.(\ref{ELMiter}) is given by:
\begin{equation*}
    d\bm{w}^{(n)}=-V_q\Sigma_q^+ U_q^T \cdot \bm{F} (\bm{w}^{(n)},\lambda).
\end{equation*}
Branches of solutions past turning points can be traced by solving the augmented, with the pseudo-arc-length condition, problem given by Eq.(\ref{augmentedarclength}). In particular in (\ref{augmentedarclength}), for the ELM framework (\ref{eq:ELMframework}), the term $\nabla_{\bm{w}}N$ becomes:
\begin{equation*}
\nabla_{\bm{w} }{N}=\bm{S}^T\frac{(\tilde{u}(\bm{w})_{-2}-\tilde{u}(\bm{w})_{-1})}{ds},
\end{equation*}
where $\bm{S}$ is the collocation matrix defined in equation \eqref{ESSE}.


\end{remark}

\begin{remark}
The three numerical methods (FD, FEM and ELM) are compared with respect to the dimension of the Jacobian matrix $J$, that in the case of FD and FEM is square and related to the number $N$ of nodes, i.e. $J \in \mathbb{R}^{N \times N}$, and in the case ELM is rectangular and related to both the number $M$ of collocation nodes and the number $N$ of neurons, i.e. $J \in \mathbb{R}^{M\times N}$. Actually, $N$ is the parameter related to the computational cost, i.e. the inversion of the $J$ is $O(N^3)$ and the same is in the ELM case for the inversion of the matrix $J^T J \in \mathbb{R}^{N\times N}$. Finally we make explicit that in all the rest of this work, for the ELM case, we use a number $M$ of collocation points that is half the number $N$ of neurons. Such a choice is justified by our previous work (\cite{calabro2020extreme}) that works better for linear PDEs with steep gradients. In general, we pinpoint that by increasing the number $M$ to be $\frac{2N}{3}, \frac{3N}{4}, etc..$\footnote{The case $M=N$ can be solved only by the use of a (Moore-Penrose) pseudo-inverse (\ref{eq:Moore-Penrose_PseudoInv}), because the invertibility of the Jacobian of the nonlinear PDE operator cannot be guaranteed in advance.} one gets even better results (see e.g. our previous work \cite{calabro2020extreme} on the solution of linear PDEs).
\end{remark}

\section{Numerical Analysis Results: the Case Studies}
The efficiency of the proposed numerical scheme is demonstrated through two benchmark nonlinear PDEs, namely (a) the one dimensional nonlinear Burgers equation with Dirichlet boundary conditions and also mixed boundary conditions, and, (b) the one- and two-dimensional Liouville–Bratu–Gelfand problem. These problems have been widely studied as have been used to model and analyse the behaviour of many physical and chemical systems (see e.g. \cite{allen2013numerical,boyd1986analytical,chan1982arc,glowinski1985continuation,hajipour2018accurate,iqbal2020numerical,raja2013neural}).\\ 
In this section, we present some known properties of the proposed problems and provide details on their numerical solution with FD, FEM and ELM with both logistic and Gaussian RBF transfer functions. 

\subsection{The Nonlinear Viscous Burgers Equation}
Here, we consider the one-dimensional steady state viscous Burgers problem:
\begin{equation}
    \nu\frac{\partial^2u}{\partial x^2}-u \frac{\partial u}{\partial x}=0
    \label{eqnburgers}
\end{equation}
in the unit interval $[0,1]$, where $\nu>0 $ denotes the viscosity. For our analysis, we considered two different sets of boundary conditions:
\begin{itemize}
    \item Dirichlet boundary conditions
\begin{equation}
    u(0)=\gamma\,,\ u(1)=0\,,\ \gamma>0 \ ;
    \label{eqnburgers_Dirichlet}
\end{equation}
\item Mixed boundary conditions: Neumann condition on the left boundary and zero Dirichlet on the right boundary:
\begin{equation}
   \frac{\partial u}{\partial x}(0)=-\vartheta\,,\ u(1)=0\,,\ \vartheta>0 \ .
   \label{eqnboundaryburgers}
\end{equation}
\end{itemize}
The two sets of boundary conditions result to  different behaviours (see \cite{allen2013numerical,benton1972table}).
We summarize in the next two lemmas some of the main results.
\begin{lemma}[Dirichlet case]\label{Lemma1} Consider Eq. (\ref{eqnburgers}) with boundary conditions given by \eqref{eqnburgers_Dirichlet}. Moreover, take (notice that $\gamma \xrightarrow[\nu\rightarrow 0]{}1 $):
\begin{equation*}
    \gamma=\frac{2}{1+\text{exp}(\frac{-1}{\nu})}-1 .
\end{equation*}
\\
Then, the problem \eqref{eqnburgers}-\eqref{eqnburgers_Dirichlet} has a unique solution 
given by:
\begin{equation}\label{BurDir:ExactSol}
    u(x)=\frac{2}{1+\text{exp}(\frac{x-1}{\nu})}-1 \ .
\end{equation}
\end{lemma}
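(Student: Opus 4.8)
The plan is to prove existence by direct substitution and uniqueness by reducing the second-order nonlinear equation to an explicitly solvable first-order one.

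First I would verify that \eqref{BurDir:ExactSol} solves the boundary value problem. Since $\nu u'' - uu' = \frac{d}{dx}\bigl(\nu u' - \tfrac12 u^2\bigr)$, it suffices to check that $\nu u' - \tfrac12 u^2$ is constant along the proposed solution. Setting $w(x) = \exp\bigl(\tfrac{x-1}{\nu}\bigr)$, so that $u = \tfrac{1-w}{1+w}$ and $w' = \tfrac{1}{\nu}w$, a one-line computation gives $\nu u' - \tfrac12 u^2 = \bigl(-4w-(1-w)^2\bigr)/\bigl(2(1+w)^2\bigr) = -\tfrac12$; hence \eqref{eqnburgers} holds. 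The boundary conditions follow from $u(1)=0$ and $u(0) = \tfrac{1-e^{-1/\nu}}{1+e^{-1/\nu}} = \tfrac{2}{1+e^{-1/\nu}}-1 = \gamma$, and the asserted limit $\gamma\to1$ as $\nu\to0^+$ is then clear.

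For uniqueness, let $u\in C^2([0,1])$ be any solution. Integrating the exact equation once gives a first integral $\nu u' - \tfrac12 u^2 = C$ with $C$ constant, so $u$ solves the scalar first-order problem $\nu u' = \tfrac12 u^2 + C$, $u(1)=0$. I would first rule out $C\ge0$: if $C=0$ then $u' = u^2/(2\nu)\ge0$, forcing $\gamma=u(0)\le u(1)=0$, impossible; if $C>0$ then $u'\ge C/\nu>0$ on $[0,1]$, so again $u(0)<u(1)=0$, impossible. Hence $C<0$; writing $C=-a^2/2$ with $a>0$, the equation becomes separable, and a partial-fraction integration with $u(1)=0$ yields
\begin{equation*}
    u(x)=a\,\frac{1-e^{a(x-1)/\nu}}{1+e^{a(x-1)/\nu}},
\end{equation*}
which is smooth on all of $[0,1]$ with values in $[0,a)$ (so no finite-interval blow-up of this Riccati equation occurs). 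Imposing the remaining condition $u(0)=\gamma$ gives $a\tanh\bigl(\tfrac{a}{2\nu}\bigr)=\gamma$; since $a\mapsto a\tanh(a/(2\nu))$ vanishes at $0$, has positive derivative $\tanh(a/(2\nu))+\tfrac{a}{2\nu}\sech^2(a/(2\nu))>0$ for $a>0$, and tends to $\infty$, it is a strictly increasing bijection of $(0,\infty)$ onto itself. As the prescribed $\gamma$ is precisely the value of this map at $a=1$, we get $a=1$, recovering \eqref{BurDir:ExactSol}. Alternatively, the Cole–Hopf substitution $u=-2\nu\,\phi'/\phi$ with $\phi=\exp\bigl(-\tfrac{1}{2\nu}\int_0^x u\bigr)$ linearizes the problem to $\phi''=\tfrac{a^2}{4\nu^2}\phi$, $\phi(0)=1$, $\phi'(1)=0$, from which the same conclusion drops out.

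The main obstacle is the uniqueness half: because the equation is nonlinear, neither a linear uniqueness theorem nor a maximum principle applies directly, so the key step is to extract the first integral and collapse everything to a scalar first-order ODE. After that one still has to (a) eliminate the cases $C\ge0$ using the signs of the boundary data and (b) confirm that the Riccati solution obtained for $C<0$ actually extends over the whole interval $[0,1]$ before matching the second boundary condition — both routine but needing a little care.
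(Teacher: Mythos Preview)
Your argument is correct: the existence check via the first integral $\nu u'-\tfrac12 u^2$ is clean, and the uniqueness argument (reduce to the first-order Riccati equation, eliminate $C\ge 0$ from the sign of the boundary data, solve explicitly for $C<0$, and then use strict monotonicity of $a\mapsto a\tanh(a/2\nu)$ to pin down $a=1$) is complete. The only implicit step is the appeal to Picard--Lindel\"of for the first-order IVP with $u(1)=0$, which justifies that the explicit formula you write is \emph{the} solution for each fixed $a$; you might make that sentence explicit.

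As for comparison with the paper: the paper states this lemma without proof, treating the closed-form solution as a known benchmark (it references \cite{allen2013numerical,benton1972table} for the behaviour of the Burgers equation under these boundary conditions). So your write-up actually supplies an argument where the paper gives none; there is nothing to compare against, and your approach via the conserved quantity and the monotone shooting map is the standard and natural one.
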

We will use this test problem because the solution has a boundary layer and for this simple case, we can also implement and discuss the efficiency of a fixed point iteration by linearization, while in the mixed-boundaries case, we implement only the Newton's iterative procedure. 
\begin{lemma}[Mixed case]\label{Lemma2_new} Consider Eq.(\ref{eqnburgers}) with boundary conditions given by \eqref{eqnboundaryburgers}. The solution of the problem can be written as \cite{allen2013numerical} :
\begin{equation}
    u(x)=\sqrt{2c}\tanh{\left(\dfrac{\sqrt{2c}}{2\nu}(1-x)\right)},
    \label{eqnsolburgers}
\end{equation}
where $c$ is constant value which can be determined by the imposed Neumann condition.\\ 
Then, for $\vartheta$ sufficiently small the viscous Burgers problem with mixed boundary conditions admits two solutions:
\begin{itemize}
    \item[(a)] a stable lower solution such that $\forall x \in (0, 1)$: \begin{equation*}
        u(x) \to_{\vartheta\to 0} 0\,,\quad \frac{\partial u(x)}{\partial x} \to_{\vartheta\to 0} 0\,;
    \end{equation*}  
\item[(b)] an unstable upper solution $u(x)>0\, \forall x \in (0, 1)$ such that: \begin{equation*}
\frac{\partial u(0)}{\partial x} \to_{\vartheta\to 0} 0\,,\quad \frac{\partial u(1)}{\partial x}\to_{\vartheta\to 0} -\infty \,,
    \end{equation*}
    and
    \begin{equation*}
        \forall x \in (0, 1)\,,\quad  u(x) \to_{\vartheta\to 0} \infty \ .
    \end{equation*} 
\end{itemize}
\end{lemma}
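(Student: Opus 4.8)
The idea is to collapse \eqref{eqnburgers}--\eqref{eqnboundaryburgers} to a single scalar equation for the free integration constant and then count its roots by elementary calculus. First I would lower the order: since $u u'=\tfrac12 (u^2)'$, integrating \eqref{eqnburgers} from $x$ to $1$ and using $u(1)=0$ gives the first integral $\nu u'(x)=\tfrac12 u(x)^2-c$ with $c:=-\nu u'(1)$, and evaluating at $x=0$ gives $c=\tfrac12 u(0)^2+\nu\vartheta>0$. So $u$ solves the first-order equation $\nu u'=\tfrac12 u^2-c$ with $u(1)=0$ and $c>0$; its right-hand side being locally Lipschitz, this has by Picard--Lindel\"{o}f a unique solution, which separation of variables identifies with the closed form \eqref{eqnsolburgers}, namely $u_c(x):=\sqrt{2c}\,\tanh\!\big(\tfrac{\sqrt{2c}}{2\nu}(1-x)\big)$ (and one may also verify this directly using $\sech^2=1-\tanh^2$, $(\tanh)'=\sech^2$). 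Thus every solution of \eqref{eqnburgers}--\eqref{eqnboundaryburgers} equals $u_c$ for a unique $c>0$, with $u_c'(1)=-c/\nu$.

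Next I would impose the Neumann condition $u_c'(0)=-\vartheta$, which reads $\vartheta=\tfrac{c}{\nu}\sech^2\!\big(\tfrac{\sqrt{2c}}{2\nu}\big)$. Setting $k:=\sqrt{2c}/(2\nu)\in(0,\infty)$, equivalently $c=2\nu^2k^2$, turns this into $\vartheta=g(k):=2\nu k^2\sech^2 k$, so the number of solutions equals the number of roots of $g(k)=\vartheta$, and everything reduces to the shape of $g$. One has $g(0^+)=0$, $g(k)\to 0$ as $k\to\infty$ (since $\sech^2 k\sim 4e^{-2k}$), $g>0$ on $(0,\infty)$, and $g'(k)=4\nu k\,\sech^2 k\,(1-k\tanh k)$. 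As $k\mapsto k\tanh k$ is continuous, strictly increasing and maps $(0,\infty)$ onto $(0,\infty)$, there is a unique $k^{\star}>0$ with $k^{\star}\tanh k^{\star}=1$; hence $g$ increases strictly on $(0,k^{\star})$, decreases strictly on $(k^{\star},\infty)$, and attains its maximum $\vartheta^{\star}:=g(k^{\star})$. Therefore for $0<\vartheta<\vartheta^{\star}$ there are exactly two solutions $k_1(\vartheta)\in(0,k^{\star})$ and $k_2(\vartheta)\in(k^{\star},\infty)$, each a continuous monotone function of $\vartheta$, with the turning point at $k=k^{\star}$ (where $g'=0$).

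Finally I would read off the $\vartheta\to 0^+$ asymptotics. On $[0,k^{\star}]$ the map $g$ is a homeomorphism onto $[0,\vartheta^{\star}]$, so $k_1\to 0$; then $c_1=2\nu^2k_1^2\to 0$ and the uniform bounds $\|u_{c_1}\|_{\infty}\le\sqrt{2c_1}$, $\|u_{c_1}'\|_{\infty}\le c_1/\nu$ read off from the explicit $u_c,u_c'$ give case (a). On $[k^{\star},\infty)$ the map $g$ is a decreasing homeomorphism onto $(0,\vartheta^{\star}]$, so $\vartheta\to 0^+$ forces $k_2\to\infty$; then $c_2=2\nu^2k_2^2\to\infty$, $u_{c_2}>0$ on $(0,1)$, and for any fixed $x\in(0,1)$ one has $\tanh(k_2(1-x))\to 1$, whence $u_{c_2}(x)=\sqrt{2c_2}\,\tanh(k_2(1-x))\to\infty$, while $u_{c_2}'(0)=-\vartheta\to 0$ and $u_{c_2}'(1)=-c_2/\nu=-2\nu k_2^2\to-\infty$: this is case (b). The stability labels refer to the parabolic problem $u_t=\nu u_{xx}-u u_x$ with the same boundary conditions: linearizing about $u_c$ gives, after the usual Liouville transformation, a regular Sturm--Liouville eigenvalue problem; the lower branch emanates at $\vartheta=0$ from the steady state $u\equiv 0$, whose linearization $\nu\partial_{xx}$ with $v'(0)=0,\,v(1)=0$ has spectrum $\{-\nu(\tfrac{\pi}{2}+n\pi)^2\}_{n\ge0}\subset(-\infty,0)$, so the lower branch is stable for small $\vartheta$; the two branches meet at a simple quadratic fold at $k=k^{\star}$, across which a single simple eigenvalue changes sign, and the standard exchange of stability there makes the upper branch unstable.

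\noindent\textbf{Main obstacle.} The only step that is not mechanical is the shape analysis of $g$ — in particular, controlling the sign of $1-k\tanh k$ to conclude that $g$ is unimodal, hence has \emph{exactly} two roots (not more) for every $\vartheta\in(0,\vartheta^{\star})$, and then tracking the two branches as $\vartheta\to 0^+$. The closed form is a verification, the reduction to $u_c$ is a uniqueness argument, and the limiting behaviours are soft continuity/monotonicity statements; the stability assertions are the least self-contained part, relying on the link to the time-dependent Burgers equation and standard bifurcation theory rather than on anything specific to \eqref{eqnburgers}.
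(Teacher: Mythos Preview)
Your argument is correct and considerably more complete than the paper's own proof, which proceeds quite differently. The paper simply takes the closed form \eqref{eqnsolburgers} as given from the cited reference, writes down the derivative $u'(x)=-\tfrac{c}{\nu}\sech^2\!\big(\tfrac{\sqrt{2c}}{2\nu}(1-x)\big)$, and then verifies the two limiting regimes directly: for (a) it lets $c\to 0$ and reads off $u\to 0$, $u'\to 0$; for (b) it lets $c\to\infty$, checks that $u'(0)\to 0$ via the exponential decay $\tfrac{c}{\nu}\sech^2(\tfrac{\sqrt{2c}}{2\nu})\sim \tfrac{4c}{\nu}e^{-\sqrt{2c}/\nu}\to 0$, that $u'(1)=-c/\nu\to-\infty$, and that $u(x)\to\infty$ on $(0,1)$. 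It does not derive the solution formula, does not prove that the Neumann condition has \emph{exactly} two roots in $c$ for small $\vartheta$, and says nothing about stability beyond the labels in the statement.

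Your route --- derive the first integral, reduce to the scalar equation $\vartheta=g(k)=2\nu k^2\sech^2 k$, and establish unimodality of $g$ via the sign of $1-k\tanh k$ --- actually \emph{proves} the multiplicity claim and, as a bonus, identifies the turning-point condition $k^\star\tanh k^\star=1$ explicitly. Your asymptotics for the two branches then follow from the homeomorphism properties of $g$ on each monotone piece, which is cleaner than the paper's direct limit computations. Your stability discussion goes well beyond what the paper offers. The trade-off is length and machinery: the paper's verification is a few lines of plugging into formulas, whereas yours is a self-contained bifurcation analysis; for the purposes of this lemma either suffices, but yours is the more informative argument.
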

\begin{proof}
The spatial derivative of (\ref{eqnsolburgers}) is given by: 
\begin{equation}
 \frac{\partial u(x)}{\partial x}=-\frac{c}{\nu}\sech^2{\left(\dfrac{\sqrt{2c}}{2\nu}(1-x)\right)}.
 \label{eqnburgersderivative}
\end{equation}
(a) When $c\to0$ then from Eq.(\ref{eqnsolburgers}), we get asymptotically the zero solution, i.e. $u(x)\to 0$, $\forall x \in (0, 1)$ and from Eq.(\ref{eqnburgersderivative}), we get $\frac{\partial u(x)}{\partial x}\to 0$, $\forall x \in (0, 1)$. At $x=1$, the Dirichlet boundary condition $u(1)=0$ is satisfied exactly (see Eq.(\ref{eqnsolburgers})), while at the left boundary $x=0$  the Neumann boundary condition is also satisfied as due to Eq.(\ref{eqnburgersderivative}) and our assumption ($\vartheta \to 0$): $\frac{\partial u(0)}{\partial x}=-\vartheta \to 0$, when $c\to0$.\par
(b) When $\frac{\partial u(1)}{\partial x}\to -\infty$, then (\ref{eqnburgersderivative}) is satisfied $\forall x \in (0, 1)$ when $c \to \infty$. In that case, at $x=0$, the Neumann boundary condition is satisfied as due to Eq.(\ref{eqnburgersderivative}) is easy to prove that $\frac{\partial u(0)}{\partial x}\to 0$. \par
Indeed, from Eq.(\ref{eqnburgersderivative}):
\begin{equation}
 \lim_{c\to\infty} \frac{\partial u(x)}{\partial x}=-\lim_{c\to\infty} \frac{\nu}{\exp{\frac{\sqrt{2c}}{\nu}}}=0.
 \label{eqnburgerslimit}
\end{equation}
Finally Eq.(\ref{eqnsolburgers}) gives $u(x) \to \infty$, $\forall x \in (0, 1)$.
\end{proof}
To better understand the behaviour of the unstable solution with respect to the left boundary condition, we can prove the following.
\begin{corollary}\label{Lemma2} Consider Eq.(\ref{eqnburgers}) with boundary conditions given by \eqref{eqnboundaryburgers}.
For the non-zero solution, when $\vartheta=\epsilon\to 0$ the solution at $x=0$ goes to infinity with values:
 \begin{equation}
    u(0)=\nu \log\left(\frac{\nu}{\epsilon}\right)\tanh\left(\frac{1}{2}\log \left(\frac{\nu}{\epsilon}\right)\right).
    \label{eqnsolburgersinfty}
\end{equation}
\end{corollary}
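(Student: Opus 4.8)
The plan is to push the explicit solution of Lemma \ref{Lemma2_new} through the left-endpoint Neumann condition and then invert the resulting transcendental relation asymptotically as $\epsilon\to0$. First I would start from \eqref{eqnsolburgers}--\eqref{eqnburgersderivative}: the non-zero branch is $u(x)=\sqrt{2c}\tanh\!\big(\tfrac{\sqrt{2c}}{2\nu}(1-x)\big)$ with $u'(x)=-\tfrac{c}{\nu}\sech^2\!\big(\tfrac{\sqrt{2c}}{2\nu}(1-x)\big)$, and I would impose the boundary datum $u'(0)=-\vartheta=-\epsilon$ from \eqref{eqnboundaryburgers}, which pins down the constant $c$ on this branch through
\begin{equation*}
\frac{c}{\nu}\,\sech^2\!\Big(\frac{\sqrt{2c}}{2\nu}\Big)=\epsilon .
\end{equation*}
By Lemma \ref{Lemma2_new}(b) this branch satisfies $c\to\infty$ as $\epsilon\to0$, which is exactly the regime in which $\sech$ can be linearised in the exponential.

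Next I would use $\sech^2 t=\dfrac{4}{(e^{t}+e^{-t})^{2}}=4e^{-2t}\big(1+O(e^{-2t})\big)$ as $t\to\infty$, with $t=\tfrac{\sqrt{2c}}{2\nu}$, so that the defining equation becomes $\dfrac{4c}{\nu}\,e^{-\sqrt{2c}/\nu}\big(1+o(1)\big)=\epsilon$. Taking logarithms gives
\begin{equation*}
\frac{\sqrt{2c}}{\nu}=\log\!\Big(\frac{\nu}{\epsilon}\Big)+\log\!\Big(\frac{4c}{\nu^{2}}\Big)+o(1),
\end{equation*}
and since the right-hand correction term is only $O\!\big(\log\log(1/\epsilon)\big)$ whereas $\log(\nu/\epsilon)\to\infty$, the leading-order solution is $\sqrt{2c}\sim \nu\log(\nu/\epsilon)$, equivalently $\tfrac{\sqrt{2c}}{2\nu}\sim\tfrac12\log(\nu/\epsilon)$. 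Substituting $x=0$ into \eqref{eqnsolburgers} then yields
\begin{equation*}
u(0)=\sqrt{2c}\,\tanh\!\Big(\frac{\sqrt{2c}}{2\nu}\Big)\sim \nu\log\!\Big(\frac{\nu}{\epsilon}\Big)\tanh\!\Big(\tfrac12\log\!\big(\tfrac{\nu}{\epsilon}\big)\Big),
\end{equation*}
which is \eqref{eqnsolburgersinfty}, and it diverges as $\epsilon\to0$ since $\log(\nu/\epsilon)\to\infty$ (the $\tanh$ factor tends to $1$).

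I expect the only delicate point to be the asymptotic inversion of this transcendental relation: one has to be explicit about which sub-dominant contributions — the $\log(4c/\nu^{2})$ piece coming from the algebraic prefactor and the $O(e^{-\sqrt{2c}/\nu})$ correction to $\sech^2$ — are discarded, and hence to read \eqref{eqnsolburgersinfty} as the leading-order asymptotic of $u(0)$ as $\epsilon\to0$ rather than an exact identity. A secondary, cosmetic remark worth making is that since $\tanh\!\big(\tfrac12\log(\nu/\epsilon)\big)\to1$, to leading order $u(0)\sim\nu\log(\nu/\epsilon)$; the $\tanh$ factor is retained in \eqref{eqnsolburgersinfty} only to mirror the exact structure of the solution \eqref{eqnsolburgers}.
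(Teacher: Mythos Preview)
Your proposal is correct and follows essentially the same route as the paper: impose the Neumann datum $u'(0)=-\epsilon$ on the explicit derivative \eqref{eqnburgersderivative}, invert asymptotically for $c$ as $\epsilon\to0$, and substitute back into \eqref{eqnsolburgers} at $x=0$. The paper's own proof is terser: it simply reads off from the intermediate expression in \eqref{eqnburgerslimit} that the slope at $x=0$ behaves like $-\nu e^{-\sqrt{2c}/\nu}$, sets this equal to $-\epsilon$ to get $c=\tfrac12\nu^2\log^2(\nu/\epsilon)$ directly, and plugs in. Your version is more careful in that you retain the correct prefactor $4c/\nu$ from $\sech^2 t\sim4e^{-2t}$ and then explicitly argue that the resulting $\log(4c/\nu^2)$ correction is sub-dominant, which justifies why the paper's cruder prefactor still lands on the right leading-order $c$; your remark that \eqref{eqnsolburgersinfty} should be read as a leading-order asymptotic rather than an exact identity is well taken.
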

\begin{proof}
By setting the value of $\vartheta$ in the Neumann boundary condition to be a very small number, i.e. $\vartheta=\epsilon\ll 1$, then from Eq.(\ref{eqnburgerslimit}), we get that the slope of the analytical solution given by Eq.(\ref{eqnburgersderivative}) is equal to $\epsilon$, when 
\begin{equation}
 c=\frac{1}{2}\nu^2 \log^2 \left(\frac{\nu}{\epsilon}\right).
 \label{eqnburgerscoef}
 \end{equation}
Plugging the above into the analytical solution given by Eq.(\ref{eqnsolburgers}), we get Eq.(\ref{eqnsolburgersinfty}).
\end{proof}
The above findings imply also the existence of a limit point bifurcation with respect to $\vartheta$ that depends also on the viscosity. For example, as shown in \cite{allen2013numerical}, for $\vartheta>0$ and $\nu=1/10$, there are two equilibria arising due to a turning point at $\vartheta^*=0.087845767978$.


\subsubsection{Numerical Solution of the Burgers equation with Finite Differences and Finite Elements}

The discretization of the one-dimensional viscous Burgers problem in $N$ points with second-order central finite differences in the unit interval $0\leq x \leq 1$ leads to the following system of $N-2$ algebraic equations $\forall x_j=(j-1)h, j=2,\dots N-1 $, $h=\frac{1}{N-1}$:
 \begin{equation*}
     F_j( \bm{u})=\frac{\nu}{h^2}(u_{j+1}-2u_{j}+u_{j-1})-u_{j}\frac{u_{j+1}-u_{j-1}}{2h}=0\ .
 \end{equation*}
At the boundaries $x_1=0, x_N=1$, we have $u_1=\gamma$, $u_N=0$, respectively for the Dirichlet boundary conditions (\ref{eqnburgers_Dirichlet}) and 
$u_1=(2h\vartheta+4u_2-u_3)/3$, $u_N=0$, respectively for the mixed boundary conditions (\ref{eqnboundaryburgers}).\par
The  above $N-2$ nonlinear algebraic equations are the residual equations \eqref{Res_Eq} that are solved iteratively using Newton's method \eqref{ELMiter}.
The Jacobian \eqref{jacelm} is now triagonal: at each $i$-th iteration, the non-null elements are given by: \begin{equation*}
\frac{\partial F_j}{\partial u_{j-1}}=\frac{\nu}{h^2}+\frac{u_j}{2h} \, ;\ 
    \frac{\partial F_j}{\partial u_j}=-\nu\frac{2}{h^2}-\frac{u_{j+1}-u_{j-1}}{2h} \, ;\ \frac{\partial F_j}{\partial u_{j+1}}=\frac{\nu}{h^2}-\frac{u_j}{2} \ .
\end{equation*}
The Galerkin residuals \eqref{eq6} in the case of the one-dimensional Burgers equation are:
\begin{equation}
R_k=\int_{0}^{1} \biggl(\nu\frac{\partial^2u(x)}{\partial x^2}-u \frac{\partial u(x)}{\partial x}\biggr) \phi_{k}(x) dx.
\label{burgersres}
\end{equation}
By inserting the numerical solution (\ref{eq5}) into Eq.(\ref{burgersres}) and by applying the Green’s formula for integration, we get:
\begin{equation}
\begin{split}
R_k=&\nu\phi_{k}(x)\frac{du}{dx}\Big|_{0}^{1}-\nu\sum_{j=1}^{N} u_j \int_{0}^{1} \frac{d \phi_{j}(x)}{dx}\frac{d\phi_{k}(x)}{dx} dx\\
&-\int_{0}^{1} \sum_{j=1}^{N} u_j \phi_{j}(x) \sum_{j=1}^{N} u_j \frac{d\phi_{j}(x)}{dx}\phi_{k}(x) dx.
\end{split}
\label{eqburgersres2}
\end{equation}
At the above residuals, we have to impose the boundary conditions. If Dirichlet boundary conditions (\ref{eqnburgers_Dirichlet}) are imposed,  Eq. (\ref{eqburgersres2}) becomes:
\begin{equation}
\begin{split}
R_k=& -\nu\sum_{j=1}^{N} u_j \int_{0}^{1} \frac{d \phi_{j}(x)}{dx}\frac{d\phi_{k}(x)}{dx} dx\\&-\int_{0}^{1} \sum_{j=1}^{N} u_j \phi_{j}(x) \sum_{j=1}^{N} u_j \frac{d\phi_{j}(x)}{dx}\phi_{k}(x) dx.
\end{split}
\label{eqburgersresdir}
\end{equation}
In the case of the mixed boundary conditions (\ref{eqnboundaryburgers}), Eq.(\ref{eqburgersres2}) becomes:
\begin{equation}
\begin{split}
R_k=&\nu\vartheta\phi_{k}(0) -\nu\sum_{j=1}^{N} u_j \int_{0}^{1} \frac{d \phi_{j}(x)}{dx}\frac{d\phi_{k}(x)}{dx} dx\\
&-\int_{0}^{1} \sum_{j=1}^{N} u_j \phi_{j}(x) \sum_{j=1}^{N} u_j \frac{d\phi_{j}(x)}{dx}\phi_{k}(x) dx.
\end{split}
\label{eqburgersneumann}
\end{equation}
In this paper, we use a $P^2$ Finite Element space, thus quadratic basis functions using an affine element mapping in the interval $[0, 1]^d$. For the computation of the integrals, we used the Gauss quadrature numerical scheme: for the one-dimensional case, we used the three-points gaussian rule: \begin{equation*}
    \left\{ \left(\frac{1}{2}-\sqrt{\frac{3}{20}}, \frac{5}{18}\right) , \left(0.5, \frac{8}{18}\right), \left(\frac{1}{2}+\sqrt{\frac{3}{20}},\frac{5}{18}\right) \right\} \ .
\end{equation*}
When writing Newton's method \eqref{ELMiter}, the elements of the Jacobian matrix for both \eqref{eqburgersresdir} and \eqref{eqburgersneumann} are given by:
\begin{equation}
\frac{\partial R_i}{\partial u_j}=- \nu\int_{0}^{1} \frac{d \phi_{j}(x)}{dx}\frac{d\phi_{k}(x)}{dx} dx-2   \int_{0}^{1} \sum_{j=1}^{N} u_j \phi_{j}(x) \frac{d\phi_{j}(x)}{dx}\phi_{k}(x) dx.
\label{eqburgersjac}
\end{equation}
Finally, with all the above, the Newton's method \eqref{ELMiter} involves the iterative solution of a linear system. For the Dirichlet problem this becomes:
\begin{equation}
\begin{bmatrix}
1 & 0 & \dots & 0 & \dots & 0\\
\frac{\partial R_2}{\partial u_1} & \frac{\partial R_2}{\partial u_2} & \dots & \frac{\partial R_2}{\partial u_j} & \dots &\frac{\partial R_2}{\partial u_N}\\
\vdots  &\vdots & \ddots & \vdots & \ddots &\vdots\\
\frac{\partial R_k}{\partial u_1} & \frac{\partial R_k}{\partial u_2} & \dots & \frac{\partial R_k}{\partial u_j} & \dots &\frac{\partial R_k}{\partial u_N}\\
\vdots  &\vdots & \ddots & \vdots & \ddots &\vdots\\
0 & 0 & \dots & 0 & \dots & 1
\end{bmatrix}_{\big|{u^{(n)}}} \cdot
\begin{bmatrix}
du^{(n)}_{1}\\
du^{(n)}_{2}\\
\vdots\\
du^{(n)}_{j}\\
\vdots\\
du^{(n)}_{N}
\end{bmatrix}=-
\begin{bmatrix}
0\\
R_{2}\\
\vdots\\
R_{k}\\
\vdots\\
0
\end{bmatrix}_{\big|{u^{(n)}}} \ ,
\label{eqnnewton}
\end{equation}
while for the problem with the mixed boundary conditions, at each iteration, we need to solve the following system:
\begin{equation}
\begin{bmatrix}
\frac{\partial R_1}{\partial u_1}  & \frac{\partial R_1}{\partial u_2}  & \dots & \frac{\partial R_1}{\partial u{j}}  & \dots & \frac{\partial R_1}{\partial u_N} \\
\frac{\partial R_2}{\partial u_1} & \frac{\partial R_2}{\partial u_2} & \dots & \frac{\partial R_2}{\partial u_j} & \dots &\frac{\partial R_2}{\partial u_N}\\
\vdots  &\vdots & \ddots & \vdots & \ddots &\vdots\\
\frac{\partial R_k}{\partial u_1} & \frac{\partial R_k}{\partial u_2} & \dots & \frac{\partial R_k}{\partial u_j} & \dots &\frac{\partial R_k}{\partial u_N}\\
\vdots  &\vdots & \ddots & \vdots & \ddots &\vdots\\
0 & 0 & \dots & 0 & \dots & 1
\end{bmatrix}_{\big|{u^{(n)}}} \cdot
\begin{bmatrix}
du^{(n)}_{1}\\
du^{(n)}_{2}\\
\vdots\\
du^{(n)}_{j}\\
\vdots\\
du^{(n)}_{N}
\end{bmatrix}=-
\begin{bmatrix}
R_{1}\\
R_{2}\\
\vdots\\
R_{k}\\
\vdots\\
0
\end{bmatrix}_{\big|{u^{(n)}}}\ .
\end{equation}

\subsubsection{Numerical Solution of the Burgers equation with Extreme Learning Machine Collocation}
Collocating the ELM network function for the one-dimensional Burgers equation leads to the following nonlinear algebraic system for $i=2,\dots, M-1$:
\begin{equation}
    F_i(\bm{w},\nu)=\nu\sum_{j=1}^Nw_j\alpha_j^2\psi_j''(x_i)-\biggl(\sum_{j=1}^Nw_j\psi_j(x_i)\biggr)\cdot\biggl( \sum_{j=1}^Nw_j \alpha_j \psi_j'(x_i)\biggr)=0 \ .
    \label{eq:ELMburgersequation}
\end{equation}
Then, the imposition of the boundary conditions \eqref{eqnburgers_Dirichlet} gives:
\begin{equation}
    F_1(\bm{w},\nu)=\sum_{j=1}^N w_j \psi_j(0)-\gamma=0, \qquad F_M(\bm{w},\nu)=\sum_{j=1}^N w_j \psi_j(1)=0 \ ,
    \label{eq:ELM_burg_dirichlet}
\end{equation}
while  boundary conditions \eqref{eqnboundaryburgers} lead to:
\begin{equation}
    F_1(\bm{w},\nu)=\sum_{j=1}^N w_j \alpha_j\psi_j'(0)+\vartheta=0, \qquad F_M(\bm{w},\nu)=\sum_{j=1}^N w_j \psi_j(1)=0 \ .
    \label{eq:ELM_burg_mixed}
\end{equation}
These equations are the residual equations \eqref{Res_Eq} that we solve by Newton's method \eqref{ELMiter}. The elements of the Jacobian matrix $\nabla_{\bm{w}}\bm{F}$ are:
\begin{equation*}
    \frac{\partial F_i}{\partial w_j}=\nu\alpha_{j}^2 \psi_j''(x_i)-\psi_j(x_i)\cdot \biggl( \sum_{j=1}^Nw_j \alpha_j \psi_j'(x_i)\biggr)-\biggl( \sum_{j=1}^Nw_j \psi_j(x_i)\biggr)\cdot \alpha_j\psi_j'(x_i)
\end{equation*}
for $ \quad i=2,\dots,M-1$ and due to the Dirichlet boundary conditions \eqref{eq:ELM_burg_dirichlet}, we have:
\begin{equation*}
    \frac{\partial F_1}{\partial w_j}(\bm{w},\lambda)=\psi_j(0) \qquad \frac{\partial F_M}{\partial w_j}(\bm{w},\lambda)=\psi_j(1).
\end{equation*}
On the other hand, due to the mixed boundary conditions given by \eqref{eq:ELM_burg_mixed}, we get:
\begin{equation*}
    \frac{\partial F_1}{\partial w_j}(\bm{w},\lambda)=\alpha_j\psi_j'(0) \qquad \frac{\partial F_M}{\partial w_j}(\bm{w},\lambda)=\psi_j(1).
\end{equation*}
At this point, the application of Newton's method \eqref{ELMiter} using the exact computation of the derivatives of the basis functions is straightforward  (see \eqref{eq:der:SF} and \eqref{eq:der:RBF}). 

\subsubsection{Numerical Results}
In all the computations with FD, FEM and ELMs, the convergence criterion for Newton's iterations was the $L_2$ \footnote{The relative error is the $L_2$--norm of the difference between two successive solutions $||u(\bm{w})_{-2}-u(\bm{w})_{-1}||_2$. In particular for the ELM framework is given by $||S^T \cdot (\bm{w}_{-2}-\bm{w}_{-1})||_2$, where $S$ is the collocation matrix defined in eq. \eqref{ESSE}.} norm of the relative error between the solutions resulting from successive iterations; the convergence tolerance was set to $10^{-6}$. In fact, for all methods, Newton's method converged quadratically also up to the order of $10^{-10}$, when the bifurcation parameter was not close to zero where the solution of both Burgers with mixed boundary conditions and Bratu problems goes asymptotically to infinity. The exact solutions that are available for the one-dimensional Burgers and Bratu problems are derived using Newton's method with a convergence tolerance of $10^{-12}$.\par
First, we present the numerical results for the Burgers equation \eqref{eqnburgers} with Dirichlet boundary conditions \eqref{eqnburgers_Dirichlet}.  Recall that for this case, the exact solution is available (see equation \eqref{BurDir:ExactSol}). For our illustrations, we have selected two different values for the viscosity, namely $\nu=0.1$ and $\nu=0.007$.
Results were obtained with Newton's iterations starting from an initial guess that is a linear segment that satisfies the boundary conditions.
Figure \ref{fig:burg_dirichlet} shows the corresponding computed solutions for a fized size $N=40$ as well as the relative errors with respect to the exact solution. As it is shon the proposed ELM scheme outperforms both the FD and FEM schemes for medium to large sizes of the grid; from low to medium sizes of the grid, all methods perform equivalently. However, as shown in Figure \ref{fig:burg_dirichlet}(c), for $\nu=0.007$, and the particualr choice of the size ($N=40$), the FD scheme fails to approximate sufficiently the steep-gradient appearing at the right boundary.\par
\begin{figure}[th]
    \centering
    \subfigure[]{
   \includegraphics[width=0.45 \textwidth]{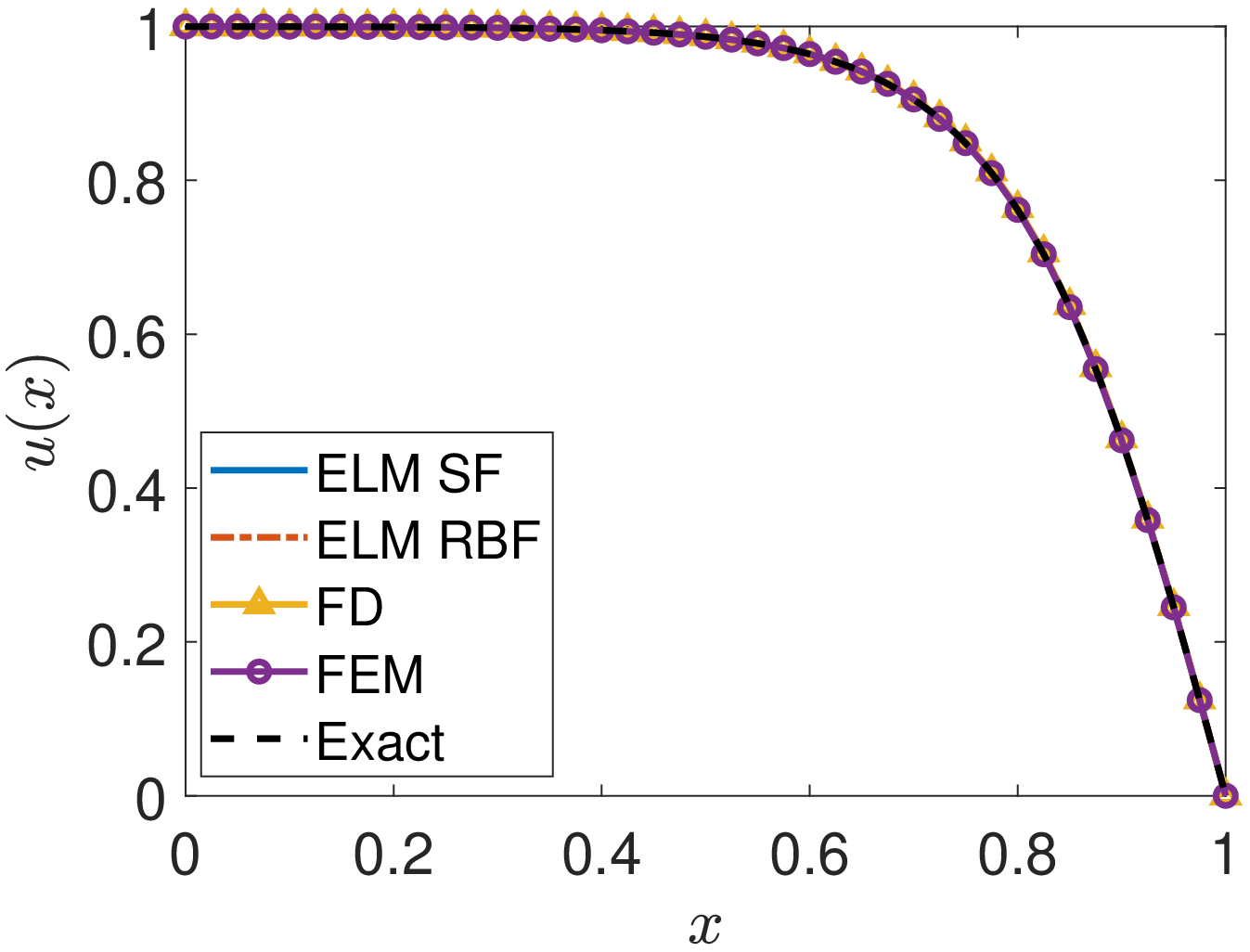}
}
~\subfigure[]{
    \includegraphics[width=0.45 \textwidth]{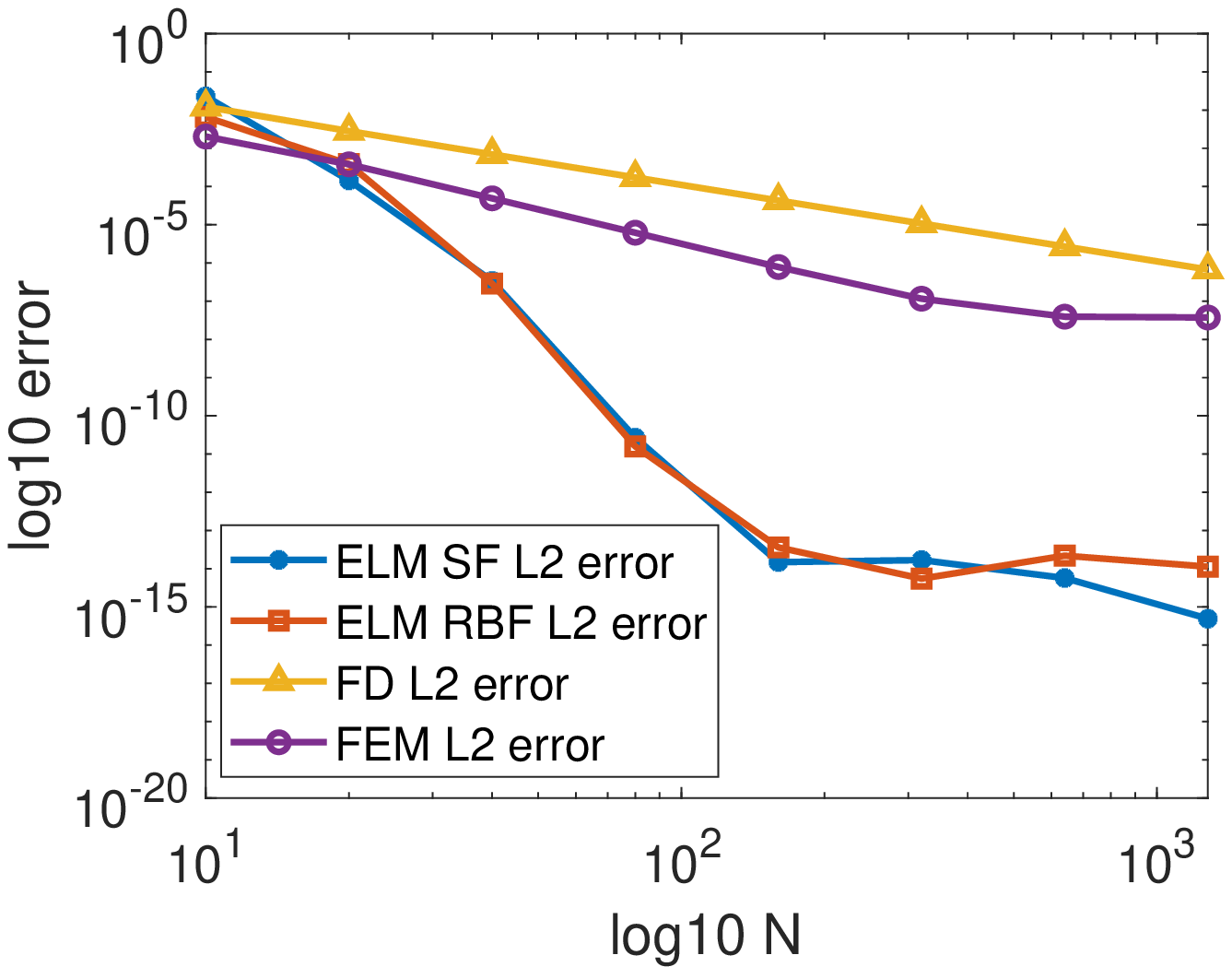}
}
~\subfigure[]{
    \includegraphics[width=0.45 \textwidth]{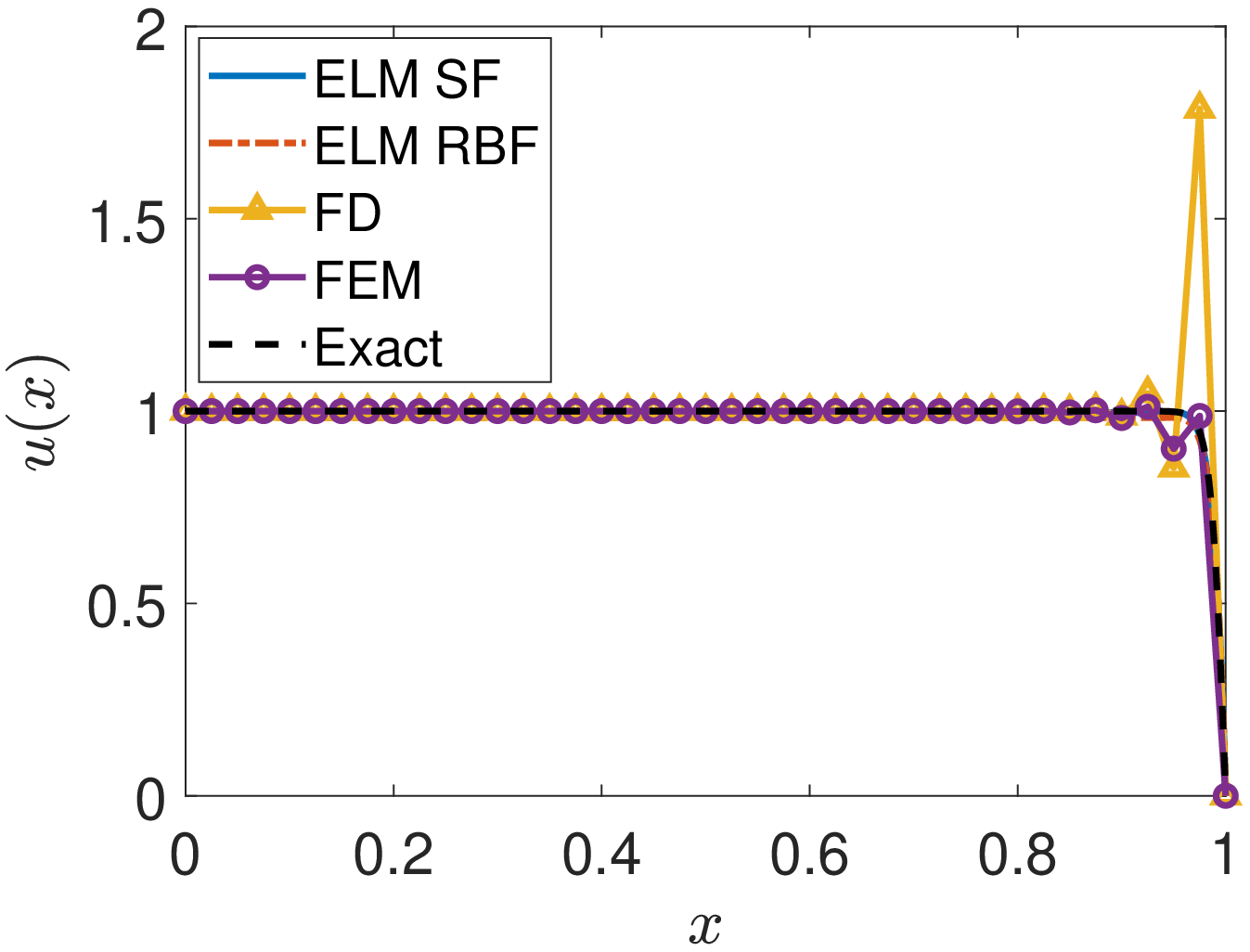}
}
~\subfigure[]{
    \includegraphics[width=0.45 \textwidth]{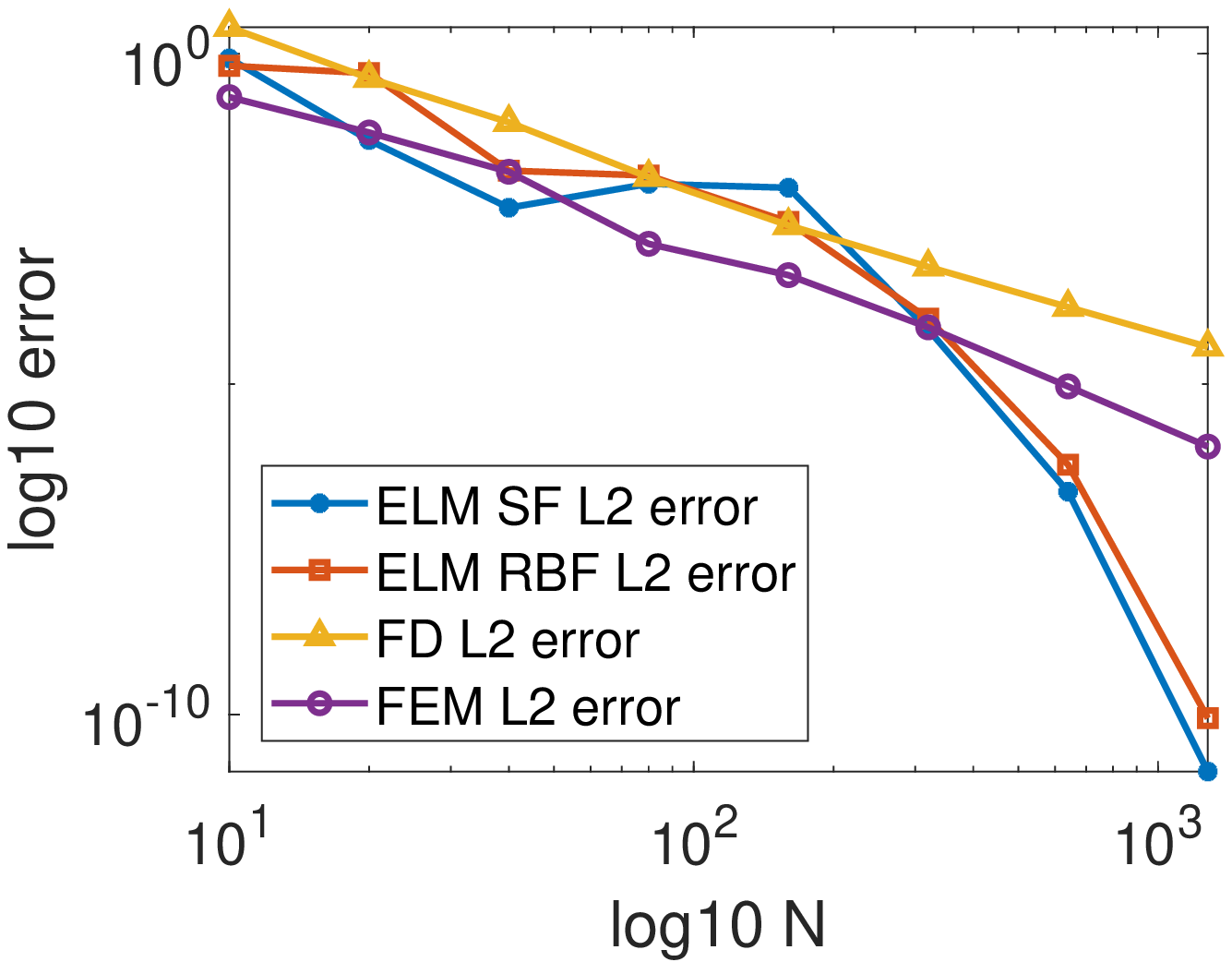}
}
\caption{Numerical solution and accuracy of the FD, FEM and ELM schemes for the one-dimensional viscous Burgers problem with Dirichlet boundary conditions (\ref{eqnburgers}), (\ref{eqnburgers_Dirichlet}), (a,b) with viscosity $\nu=0.1$: (a) Solutions for a fixed problem size $N=40$; (b) $L_2$--norm of differences with respect to the exact solution \eqref{BurDir:ExactSol} for various problem sizes. (c,d) with viscosity $\nu=0.007$: (c) Solutions for a fixed problem size $N=40$; (d) $L_2$--norm errors with respect to the exact solution for various problem sizes.}
\label{fig:burg_dirichlet}
\end{figure}
Then, we considered the case of the non-homogeneous Neumann condition on the left boundary \eqref{eqnburgers}- \eqref{eqnboundaryburgers}; here, we have set $\nu=1/10$. In this case, the solution is not unique and the resulting bifurcation diagram obtained with FD, FEM and ELM is depicted in Fig.(\ref{figbifdiagburgers}). In Table 1, we report the error between the value of the bifurcation point as computed with FD, FEM and ELM for various problem sizes $N$, with respect to the exact value of the bifurcation point (occurring for the particular choice of viscosity at $\vartheta^*=0.087845767978$). The location of the bifurcation point for all numerical methods was estimated by fitting a parabola around the four points (two on the lower and two on the upper branch) of the largest values of $\lambda$ as obtained by the pseudo-arc-length continuation. As shown, the proposed ELM scheme performs equivalently to FEM for low to medium sized of the grid, thus outperforming FEM for medium to large grid sizes; both methods FEM and ELM) outperform FD for all sizes of the grid.
\FloatBarrier
\begin{figure}[th]
    \centering
    \subfigure[]{
    \includegraphics[width=0.45 \textwidth]{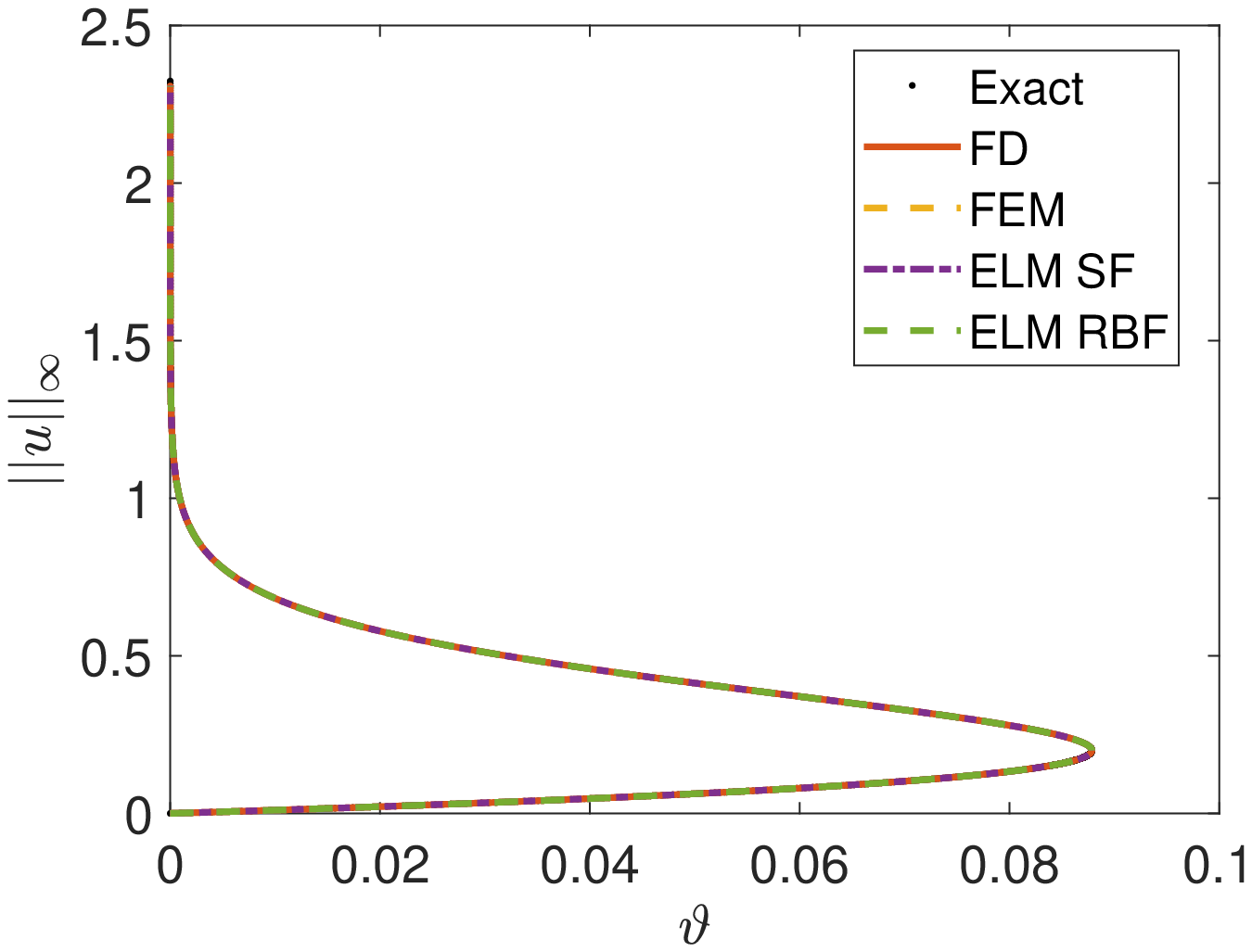}
    }
    \subfigure[]{
    \includegraphics[width=0.45 \textwidth]{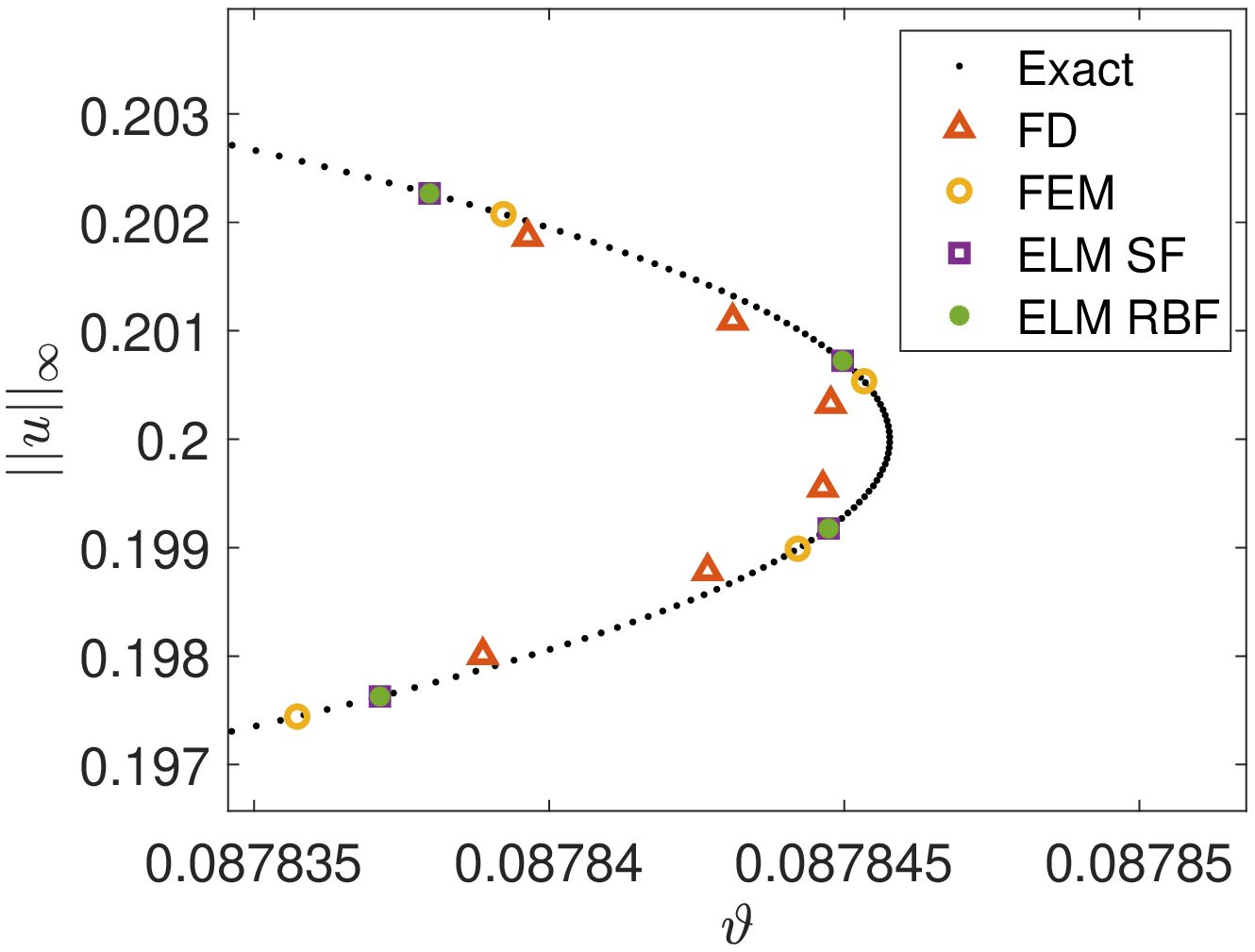}
    }
    \caption{(a) One-dimensional Burgers equation (\ref{eqnburgers}) with mixed boundary conditions (\ref{eqnboundaryburgers}). Bifurcation diagram with respect to the Neumann boundary value $\theta$ as obtained for $\nu=1/10$, with FD, FEM and ELM schemes with a fixed problem size $N=400$; (b) Zoom near the turning point.}
    \label{figbifdiagburgers}
\end{figure}
\begin{table}[th]
\begin{tabular}{lrrrr}
\hline
\hline
N & FD & FEM & ELM SF & ELM RBF\\
\hline
20  & -3.3230e-04 & -4.8557e-09 & 2.7506e-08  & -4.3683e-06 \\
50  & -5.3487e-05 & -7.6969e-09 & -2.0571e-09 & -2.1431e-09 \\
100 & -1.3370e-05 & -2.1575e-09 & -9.8439e-09 & -9.8483e-09 \\
200 & -3.3420e-06 & -5.9262e-09  & -9.6156e-09 & -9.6095e-09 \\
400 & -8.3473e-07 & 4.1474e-09 & 9.3882e-10  & 9.3338e-10 \\
\hline
\hline
\end{tabular}
\label{burgerscombifpoint}
\caption{One-dimensional Burgers equation (\ref{eqnburgers}) with mixed boundary conditions (\ref{eqnboundaryburgers}). Comparative results with respect to the error between the estimated value of the turning point as obtained with FD, FEM and ELMs schemes and the exact value of the turning point at $\vartheta^*=0.087845767978$ for $\nu=1/10$. The value of the turning point was estimated by fitting a parabola around the four points with the largest $\lambda$ values as obtained by the arc-length continuation.}
\end{table}
In this case, steep gradients arise at the right boundary related to the presence of the upper unstable solution, as discussed in Lemma \ref{Lemma2_new} and Corollary \ref{Lemma2}. In Table \ref{Table1e6}, we report the error between the numerically computed and the exact analytically obtained value (see Eq. \eqref{eqnsolburgers}) at $x=0$ when the value of boundary condition $\vartheta$ at the left boundary is $\vartheta=10^{-6}$. Again as shown, near the left boundary, the proposed ELM scheme outperforms both FEM and FD for medium to larger sizes of the grid.
\FloatBarrier
\begin{table}[th]
\begin{tabular}{lrrrr}
\hline
\hline
N & FD & FEM & ELM SF & ELM RBF\\
\hline
20  & -1.8099e-01 & 2.0532e-02 & -6.5492e-01 & -6.1366e-01 \\
50  & -2.6632e-02 & 7.6660e-04 & -5.8353e-01 & -6.0850e-01 \\
100 & -6.5179e-03 & 1.5752e-04 & -1.9976e-01 & -1.0504e-01 \\
200 & -1.6105e-03 & 8.9850e-05 & -2.4956e-06 & -5.0483e-06 \\
400 & -3.9992e-04 & 6.2798e-05 & -3.4737e-06 & -9.5189e-06 \\
\hline
\hline
\end{tabular}
\caption{One-dimensional Burgers equation (\ref{eqnburgers}) with mixed boundary conditions (\ref{eqnboundaryburgers}). Comparative results with respect to the error between the computed solution (at $x=0$) with FD, FEM and ELMs (with both sigmoidal and radial basis functions) and the exact solution $u(0)=1.798516682636303$ (see Eq. \eqref{eqnsolburgers}) for $\vartheta=1e-6$ (the value of the Neumann condition at the left boundary).}
\label{Table1e6}
\end{table}

\begin{remark}[Linearization of the Burgers equation for its numerical solution.]\label{remark:burg_lin}
For the numerical solution of the Burgers equation \eqref{eqnburgers} with boundary conditions given by \eqref{eqnburgers_Dirichlet}, one can also consider the following simple iterative procedure that linearizes the equation:
\begin{equation*}
\left\{\begin{array}{l}
\text{Given }  u^{(0)}, \text{ do until convergence}    \\
\text{find } u^{(k)} \text{ such that } \nu\dfrac{\partial^2u^{(k)}}{\partial x^2}-u^{(k-1)} \dfrac{\partial u^{(k)}}{\partial x}=0 \ .
\end{array}\right.
\end{equation*}
In this way, the nonlinear term becomes a linear advection term with a non-constant coefficient given by the evaluation of $u$ at the previous iteration. This results to a fixed point scheme.
Such linearized equations can be easily solved, being linear elliptic equations, and thus in this case one can perform the analysis for linear systems presented in \cite{calabro2020extreme}. The results of this procedure are depicted in Figure \ref{fig:burg_fixpoint}.
\begin{figure}[th]
    \centering
    \subfigure[]{
   \includegraphics[width=0.45 \textwidth]{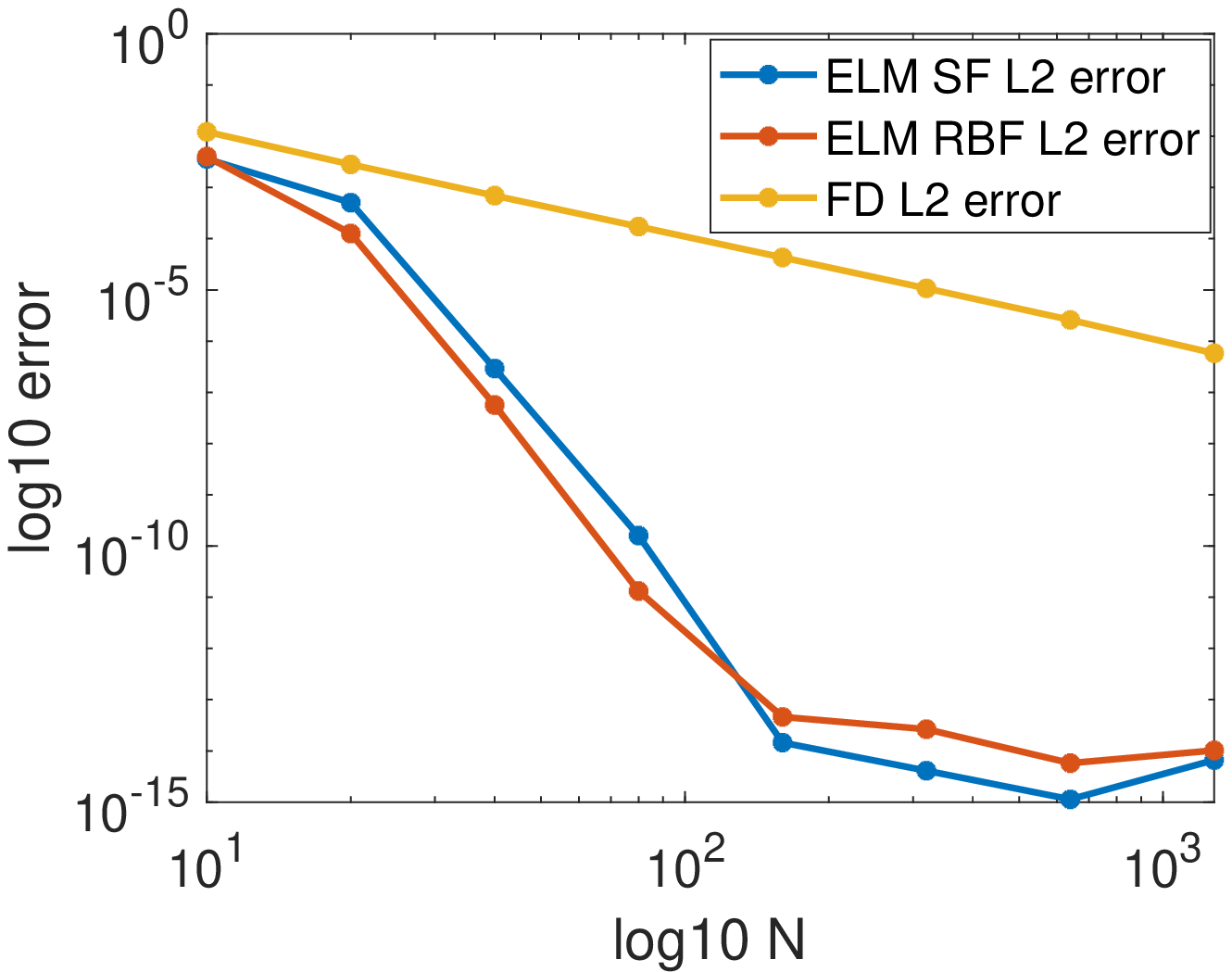}
}
~\subfigure[]{
    \includegraphics[width=0.45 \textwidth]{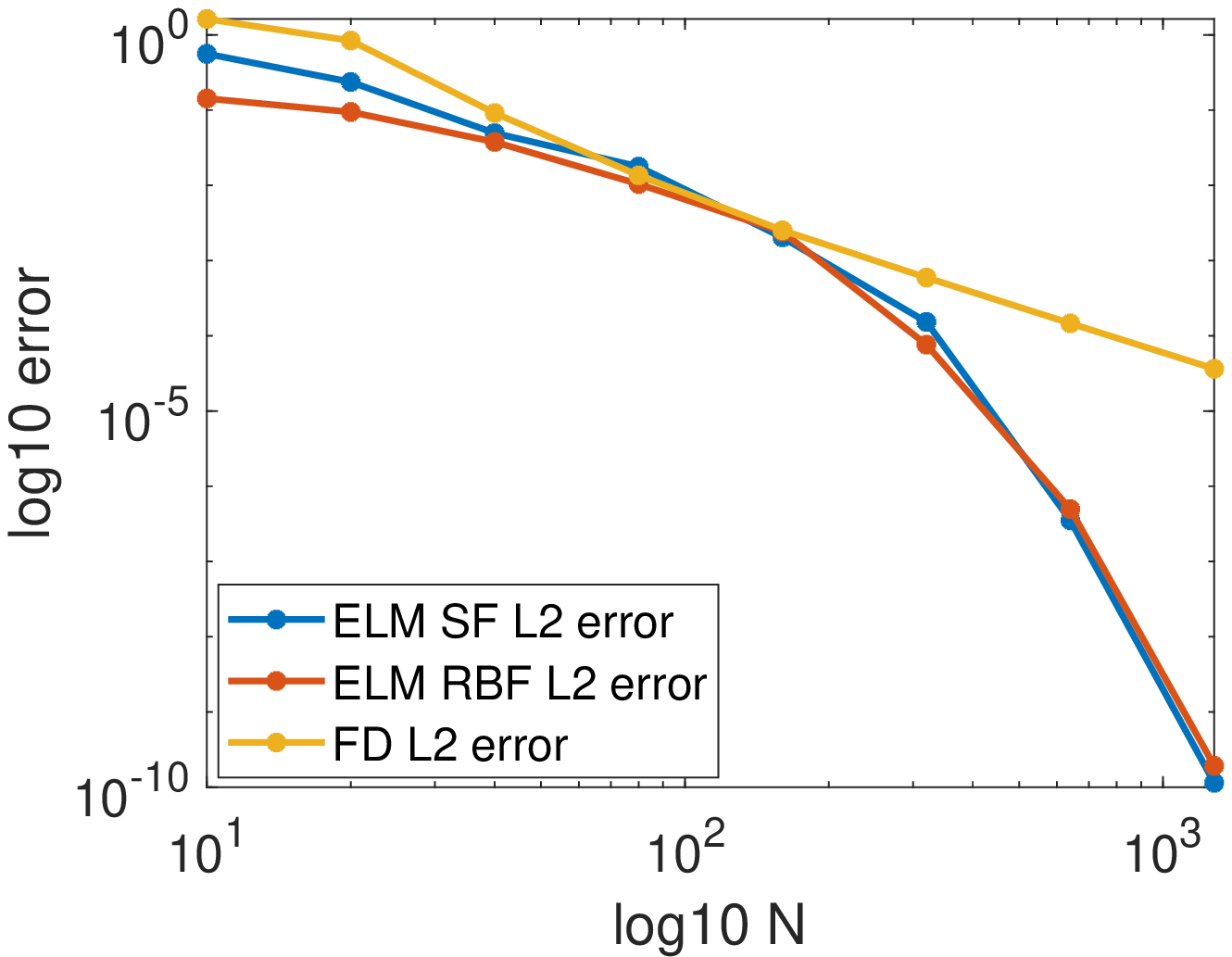}
}
\caption{Numerical accuracy of FD and ELM schemes with respect to the exact solution, for the case of the one-dimensional Burgers equation \eqref{eqnburgers} with Dirichlet boundary conditions given by \eqref{eqnburgers_Dirichlet}, as obtained by the fixed point scheme described in Remark \ref{remark:burg_lin} for (a) $\nu=0.1$ and (b) $\nu=0.007$. We depict the $L_2$--norm of the difference between the solutions obtained with FD and ELMs and the exact solution \eqref{BurDir:ExactSol}. }
\label{fig:burg_fixpoint}
\end{figure}
We point out that such iterations converge generally very slowly and, what is most important from our point of view, is that convergence is obtained only for a very ``good" guess of the solution. 
\end{remark}

\subsection{The one- and two-dimensional Liouville–Bratu–Gelfand Problem}\label{Sect:sez4.2}
The Liouville–Bratu–Gelfand model arises in many physical and chemical systems. It is an elliptic partial differential equation which in its general form is given by \cite{boyd1986analytical}:
\begin{equation}\label{eq:Bratu}
\Delta u(\bm{x}) +\lambda e^{u(\bm{x})}=0\,\ \bm{x} \in \Omega,
\end{equation}
with homogeneous Dirichlet conditions
\begin{equation}\label{eq:Bratu:BC}
u(\bm{x}) =0\,,\ \bm{x} \in \partial \Omega.
\end{equation}
The domain that we consider here is the $\Omega=[0, 1]^d$ in $R^d\,,\ d=1,2$.\par
The one-dimensional problem admits an analytical solution given by \cite{mohsen2014simple}:
\begin{equation}
u(x)=2\ln\frac{\cosh{\theta}}{\cosh{\theta (1-2x)}}, \text{ where $\theta$ is such that } \cosh{\theta} = \frac{4\theta}{\sqrt{2 \lambda}}.
\label{eq:sys1}
\end{equation}
It can be shown that when $0<\lambda <\lambda_c$ 
the problem admits two branches of solutions that meet at $\lambda_c \sim 3.513830719$, a limit point (saddle-node bifurcation) that marks the onset of two branches of solutions with different stability, while beyond that point no solutions exist.\par
For the two-dimensional problem, to the best of our knowledge, no such (as in the one-dimensional case) exact analytical solution exist that is verified by the numerical results that have been reported in the literature (e.g. \cite{chan1982arc,hajipour2018accurate}), in which the authors 
report the value of the turning at $\lambda_c \sim 6.808124$.
\par
\subsubsection{Numerical Solution with Finite Differences and Finite Elements}

The discretization of the one-dimensional problem in $N$ points with central finite differences at the unit interval $0\leq x \leq 1$ leads to the following system of $N-2$ algebraic equations $\forall x_j=(j-1)h, j=2,\dots N-1 $, $h=\frac{1}{N-1}$:
 \begin{equation*}
     F_j(u)=\frac{1}{h^2}(u_{j+1}-2u_{j}+u_{j-1})+\lambda e^{u_j}=0,
 \end{equation*}
where, at the boundaries $x_1=0, x_N=1$, we have $u_1=u_N=0$.\par
The solution of the above $N-2$ nonlinear algebraic equations is obtained iteratively using the Newton-Raphson method. The Jacobian is now triagonal; at each $n$-th iteration, the elements at the main diagonal are given by ${\frac{\partial F_j}{\partial u_j}}^{(n)}=-\frac{2}{h^2}+\lambda e^{u_{j}^{(n)}}$ and the elements of the first diagonal above and the first diagonal below are given by ${\frac{\partial F_{j+1}}{\partial u_j}}^{(n)}={\frac{\partial F_{j}}{\partial u_{j+1}}}^{(n)}=\frac{1}{h^2}$, respectively.\par
The discretization of the two-dimensional Bratu problem in $N\times N$ points with central finite differences on the square grid $0\leq x,y\leq 1$ with zero boundary conditions leads to the following system of $(N-2)\times (N-2)$ algebraic equations $	\forall (x_i=(i-1)h, y_j=(j-1)h), i,j=2,\dots N-1 $, $h=\frac{1}{N-1}$:
 \begin{equation*}
     F_{i,j}(u)=\frac{1}{h^2}(u_{i+1,j}+u_{i,j+1}-4u_{i,j}+u_{i,j-1}+u_{i-1,j})+\lambda e^{u_{i,j}}=0.
 \end{equation*}
The Jacobian is now a $(N-2)^2 \times (N-2)^2$ block diagonal matrix of the form:
\begin{equation*}
\nabla F = \frac{1}{h^2} \begin{bmatrix}
T_2 & I & 0 & 0 & \dots & \dots & 0\\
 I & T_3 & I & 0 & \dots & \dots & 0\\
0 & I & T_4 & I & 0 & \dots & 0\\
\vdots & \vdots & \ddots & \ddots & \ddots & \ddots & \vdots\\
0 & \dots & \dots & \dots & \dots & I & T_{N-1}
\end{bmatrix},
 \end{equation*}
where $I$ is the $(N-2)\times (N-2)$ identity matrix and $T_i$ is the $(N-2)\times (N-2)$ tridiagonal matrix with non null elements on the $j$-th row:
\begin{equation*}
    1\,,\quad -4 +h^{2} \lambda e^{u_{i+j,i+j}} \,,\quad 1
\end{equation*}

Regarding the FEM solution, for the one-dimensional Bratu problem, Eq. \eqref{eq6} gives:
\begin{equation}
R_k=\int_{\Omega} \left(\frac{\partial^2u}{\partial x^2} +\lambda e^{u(x)}\right) \phi_{k}(x) dx.
\label{eqbratres}
\end{equation}
By inserting Eq.(\ref{eq5}) into Eq.(\ref{eqbratres}) and by applying the Green’s formula for integration, we get:
\begin{equation}
R_k=\phi_{k}(x)\frac{du}{dx}\Big|_{0}^{1}-\sum_{j=1}^{N} u_j \int_{0}^{1} \frac{d \phi_{j}(x)}{dx}\frac{d\phi_{k}(x)}{dx} dx+\lambda \int_{0}^{1} e^{ \sum_{j=1}^{N} u_j \phi_{j}(x)}\phi_{k}(x) dx
\label{eq8}
\end{equation}
and because of the zero Dirichlet boundary conditions, Eq.(\ref{eq8}) becomes:
\begin{equation*}
R_k=-\sum_{j=1}^{N} u_j \int_{0}^{1} \frac{d \phi_{j}(x)}{dx}\frac{d\phi_{k}(x)}{dx} dx+\lambda\int_{0}^{1} e^{\sum_{j=1}^{N} u_j \phi_{j}(x)}\phi_{k}(x)dx.
\label{eq9}
\end{equation*}
The elements of the Jacobian matrix are given by:
\begin{equation}
\frac{\partial R_i}{\partial u_j}=- \int_{0}^{1} \frac{d \phi_{j}(x)}{dx}\frac{d\phi_{k}(x)}{dx} dx+\lambda\int_{0}^{1} e^{\sum_{j=1}^{N} u_j \phi_{j}(x)}  \phi_{j}(x) \phi_{k}(x)dx
\label{eq10b}
\end{equation}
Due to the Dirichlet boundary conditions, Eq.(\ref{eq10b}) becomes:
\begin{equation}
\begin{bmatrix}
1 & 0 & \dots & 0 & \dots & 0\\
\frac{\partial R_2}{\partial u_1} & \frac{\partial R_2}{\partial u_2} & \dots & \frac{R_2}{\partial u_j} & \dots &\frac{R_2}{\partial u_N}\\
\vdots  &\vdots & \ddots & \vdots & \ddots &\vdots\\
\frac{\partial R_k}{\partial u_1} & \frac{\partial R_k}{\partial u_2} & \dots & \frac{R_k}{\partial u_j} & \dots &\frac{R_k}{\partial u_N}\\
\vdots  &\vdots & \ddots & \vdots & \ddots &\vdots\\
0 & 0 & \dots & 0 & \dots & 1
\end{bmatrix}_{\big|{u^{(n)}}} \cdot
\begin{bmatrix}
du^{(n)}_{1}\\
du^{(n)}_{2}\\
\vdots\\
du^{(n)}_{j}\\
\vdots\\
du^{(n)}_{N}
\end{bmatrix}=-
\begin{bmatrix}
0\\
R_{2}\\
\vdots\\
R_{k}\\
\vdots\\
0
\end{bmatrix}_{\big|{u^{(n)}}}.
\label{eqnnewtonFE}
\end{equation}
For the two-dimensional Bratu problem
, the residuals are given by:
\begin{equation*}
R_k=\iint_{\Omega} (\frac{\partial^2u(x,y)}{\partial x^2}+\frac{\partial^2u(x,y)}{\partial y^2} +\lambda e^{u(x,y)}) \phi_k(x,y) dx dy.
\label{eq11}
\end{equation*}
By applying the Green’s formula for integration, we get:
\begin{equation*}
\begin{split}
R_k=\oint_{\partial{\Omega}} \nabla u(x,y) d\ell -&\iint_{\Omega} \nabla u(x,y) \nabla \phi_{k}(x,y) dx dy\\ +&\iint_{\Omega} \lambda e^{u(x,y)} \phi_k(x,y) dx dy.
\end{split}
\label{eq12}
\end{equation*}
By inserting Eq.(\ref{eq5}) and the zero Dirichlet boundary conditions, we get:
\begin{equation*}
\begin{split}
R_k= -&\sum_{j=1}^{N} u_j \iint_{\Omega} \nabla \phi_{j}(x,y) \nabla \phi_{k}(x,y) dx dy\\ 
+&\iint_{\Omega} \lambda e^{\sum_{j=1}^{N} u_{j} \phi_{j}(x,y)} \phi_k(x,y) dx dy.
\end{split}
\label{eq13}
\end{equation*}
Thus, the elements of the Jacobian matrix for the two-dimensional Bratu problem are given by:
\begin{equation*}
\begin{split}
\frac{\partial R_k}{\partial u_j} =-&\iint_{\Omega} \nabla \phi_{j}(x,y) \nabla \phi_{k}(x,y) dx dy\\ +&\iint_{\Omega} \lambda e^{\sum_{j=1}^{N} u_{j} \phi_{j}(x,y)}  \phi_{j}(x,y) \phi_k(x,y) dx dy.
\end{split}
\label{eq14}
\end{equation*}
As before, for our computations we have used 
quadratic basis functions using an affine  element mapping in the domain $[0, 1]^2$. 

\subsubsection{Numerical Solution with Extreme Learning Machine Collocation}

Collocating the ELM network function (\ref{eq:ELMframework}) in the 1D Bratu problem (\ref{eq:Bratu}) leads to the following system:
\begin{equation*}
    F_i(\bm{w},\lambda)=\sum_{j=1}^N w_j \alpha_{j}^2 \psi_j''(x_i)+\lambda \text{exp}\biggl(\sum_{j=1}^N w_j \psi_j(x_i)\biggr)=0, \quad i=2,\dots,M-1
\end{equation*}
with boundary conditions:
\begin{equation*}
    F_1(\bm{w},\lambda)=\sum_{j=1}^N w_j \psi_j(0)=0, \qquad F_M(\bm{w},\lambda)=\sum_{j=1}^N w_j \psi_j(1)=0.
\end{equation*}
Thus, the elements of the Jacobian matrix $\nabla_{\bm{w}} \bm{F}$ are given by:
\begin{equation*}
    \frac{\partial F_i}{\partial w_j}=\alpha_{j}^2 \psi_j''(x_i)+\lambda \psi_j(x_i)\text{exp}\biggl(\sum_{j=1}^N w_j \psi_j(x_i)\biggr), \qquad i=2,\dots,M-1
\end{equation*}
and
\begin{equation*}
    \frac{\partial F_1}{\partial w_j}(\bm{w},\lambda)=\psi_j(0) \qquad \frac{\partial F_M}{\partial w_j}(\bm{w},\lambda)=\psi_j(1).
\end{equation*}
The application of Newton's method \eqref{ELMiter} is straightforward using the exact computation of derivatives of the basis functions (see \eqref{eq:der:SF} and \eqref{eq:der:RBF}).\par
For the two-dimensional Bratu problem (\ref{eq:Bratu}), we have:
\begin{equation*}
\begin{split}
    F_i(\bm{w},\lambda)=&\sum_{j=1}^N w_j \alpha_{j,1}^2 \psi_j''(x_i,y_i)+\sum_{j=1}^N w_j \alpha_{j,2}^2 \psi_j''(x_i,y_i)\\
    &+\lambda\, \text{exp}\biggl(\sum_{j=1}^N w_j \psi_j(x_i,y_i)\biggr)=0, \quad i=1,\dots,M_{\Omega}
\end{split}
\end{equation*}
with boundary conditions:
\begin{equation*}
    F_k(\bm{w},\lambda)= \sum_{j=1}^N w_j \psi_j(x_k,y_k)=0, \quad k=1,\dots,M_{1}.
\end{equation*}
Thus, the elements of the Jacobian matrix $\nabla_{\bm{w}} F$ read:
\begin{equation*}
\begin{split}
    \frac{\partial F_i}{\partial w_j}&=\alpha_{j,1}^2 \psi_j''(x_i,y_i)+\alpha_{j,2}^2 \psi_j''(x_i,y_i)\\
    &+\lambda \psi_j(x_i,y_i)\, \text{exp}\biggl(\sum_{j=1}^N w_j \psi_j(x_i,y_i)\biggr), \qquad i=1,\dots,M_{\Omega}
\end{split}
\end{equation*}
and
\begin{equation*}
    \frac{\partial F_k}{\partial w_j}(\bm{w},\lambda)=\psi_j(x_k,y_k)=0, \quad k=1,\dots,M_{1} \ .
\end{equation*}
Also in this case, with the above computations the application of Newton's method \eqref{ELMiter} is straightforward.

\subsubsection{Numerical results for the one-dimensional problem}
\begin{figure}
    \centering
    \subfigure[]{
   \includegraphics[width=0.45 \textwidth]{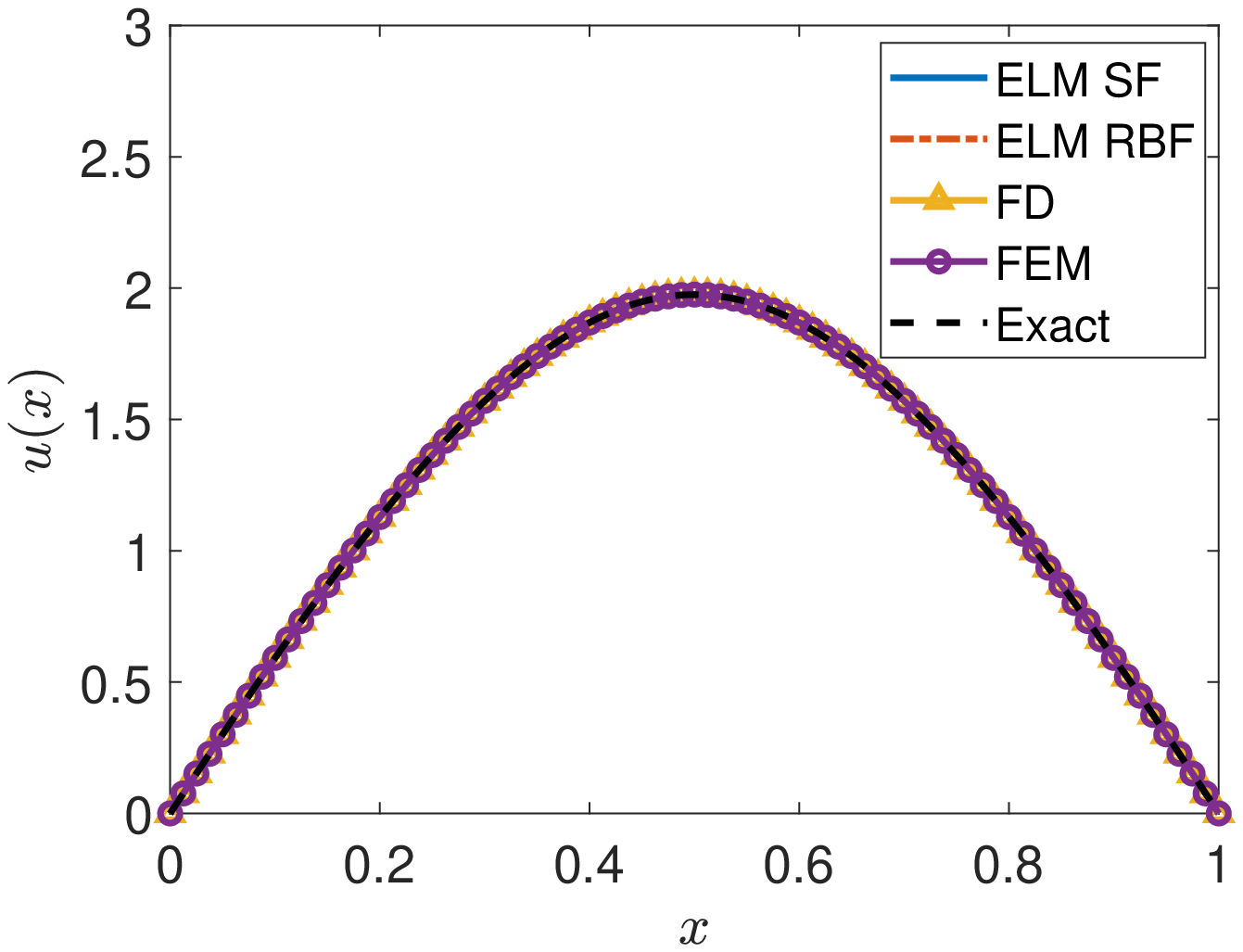}
}
~\subfigure[]{
    \includegraphics[width=0.45 \textwidth]{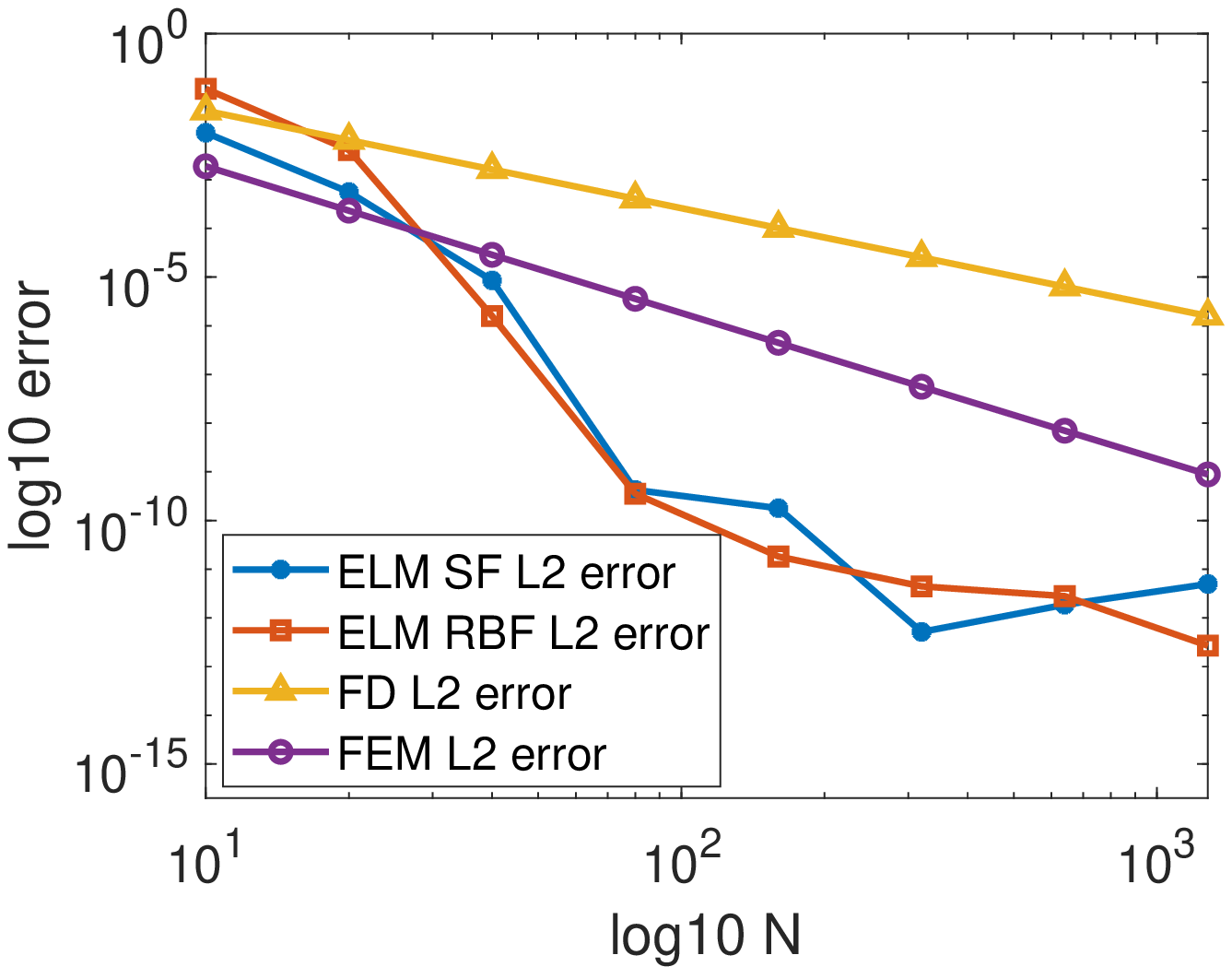}
    }
~\subfigure[]{
    \includegraphics[width=0.45 \textwidth]{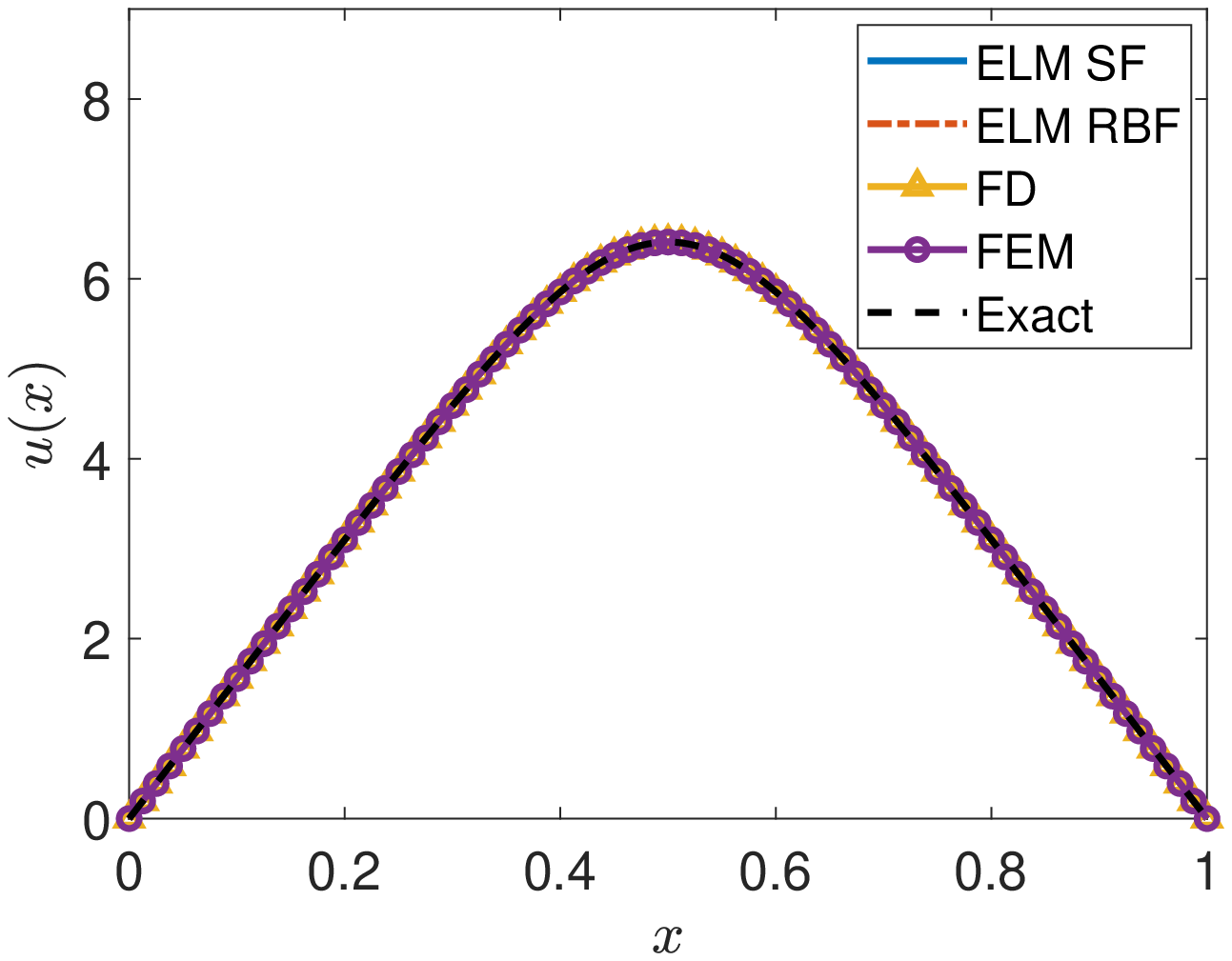}
    }
    ~\subfigure[]{
    \includegraphics[width=0.45 \textwidth]{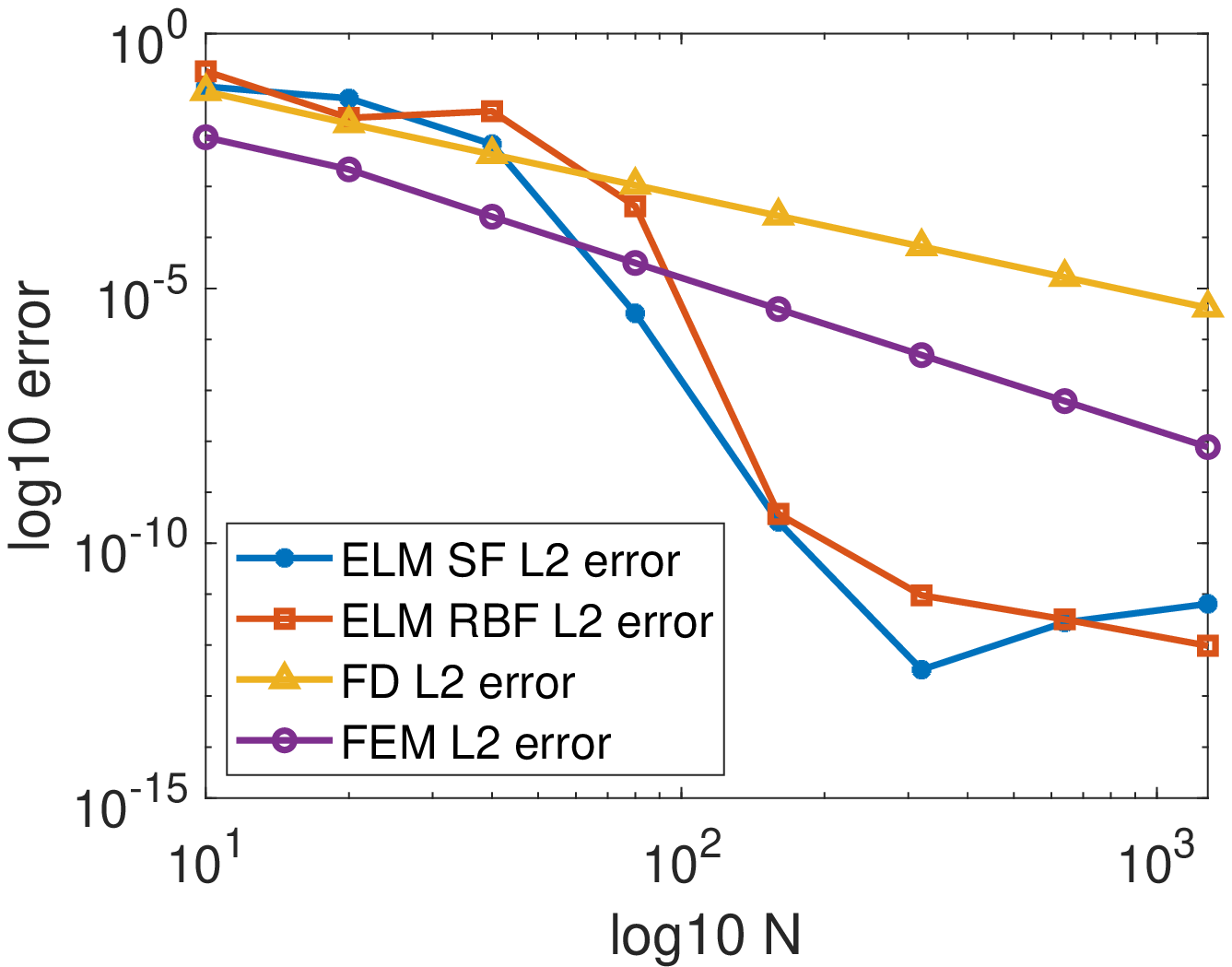}
    }
  \caption{Numerical solutions and accuracy of the FD, FEM and ELMs schemes for the one-dimensional Bratu problem (\ref{eq:Bratu}). (a) Computed solutions at the upper-branch unstable solution at $\lambda=3$ for a fixed problem size $N=40$. (b) $L_2$--norm of differences with respect to the exact unstable solution \eqref{eq:sys1} at $\lambda=3$ for various values of $N$. (c) Computed solutions at the upper-branch unstable solution at $\lambda=0.2$ with a fixed problem size $N=40$. (d) $L_2$--norm of differences with respect to the exact unstable solution \eqref{eq:sys1} at $\lambda=0.2$ for various values of $N$. The initial guess of the solutions was a parabola satisfying the homogeneous boundary conditions with a fixed $L_{\infty}$--norm $||u||_{\infty}=l_0$ close to the one resulting from the exact solution.}
    \label{fig:bratu_sol}
\end{figure}

First, we show the numerical results for the one-dimensional Liouville–Bratu–Gelfand equation (\ref{eq:Bratu}) with homogeneous Dirichlet boundary conditions  \eqref{eq:Bratu:BC}. Recall that an exact solution, although in implicit form, is available in this case (see equation \eqref{eq:sys1}); thus, as discussed,  the exact  solutions are derived using Newton's method with a convergence tolerance of $10^{-12}$.
Figure \ref{fig:bratu_sol} depicts the comparative results between the exact, FD, FEM and ELM solutions on the upper-branch as obtained by applying Newton's iterations, for two values of the parameter $\lambda$ and a fixed $N=40$, namely for $\lambda=3$ close to the turning point (occurring at $\lambda_c \sim 3.513830719$) and for $\lambda=0.2$. For our illustrations, we have set as initial guess $u_0(x)$ a parabola that satisfies the homogeneous boundary conditions, namely: 
\begin{equation*}
    u_0(x)=4l_0(x-x^2),
\end{equation*}
with a fixed $L_{\infty}$--norm $||u||_{\infty}=l_0$ close to the one obtained from the exact solution.\par
In particular, for $\lambda=3$, we used as initial guess a parabola with $l_0=2.2$; in all cases Newton's iterations converge to the correct unstable upper-branch solution. For $\lambda=0.2$, we used as initial guess a parabola with $l_0=6.4$ (the exact solution has $l_0 \sim 6.5)$; again in all cases, Newton's iterations converged to the correct unstable upper-branch solution. To clarify more the behaviour of the convergence, in Figure \ref{fig:conv_zone}, we report the regimes of convergence for a grid of $L_{\infty}$ norms of the initial guesses (parabolas) and $\lambda$s.

\begin{figure}[h!]
    \centering
    \subfigure[]{
   \includegraphics[width=0.45 \textwidth]{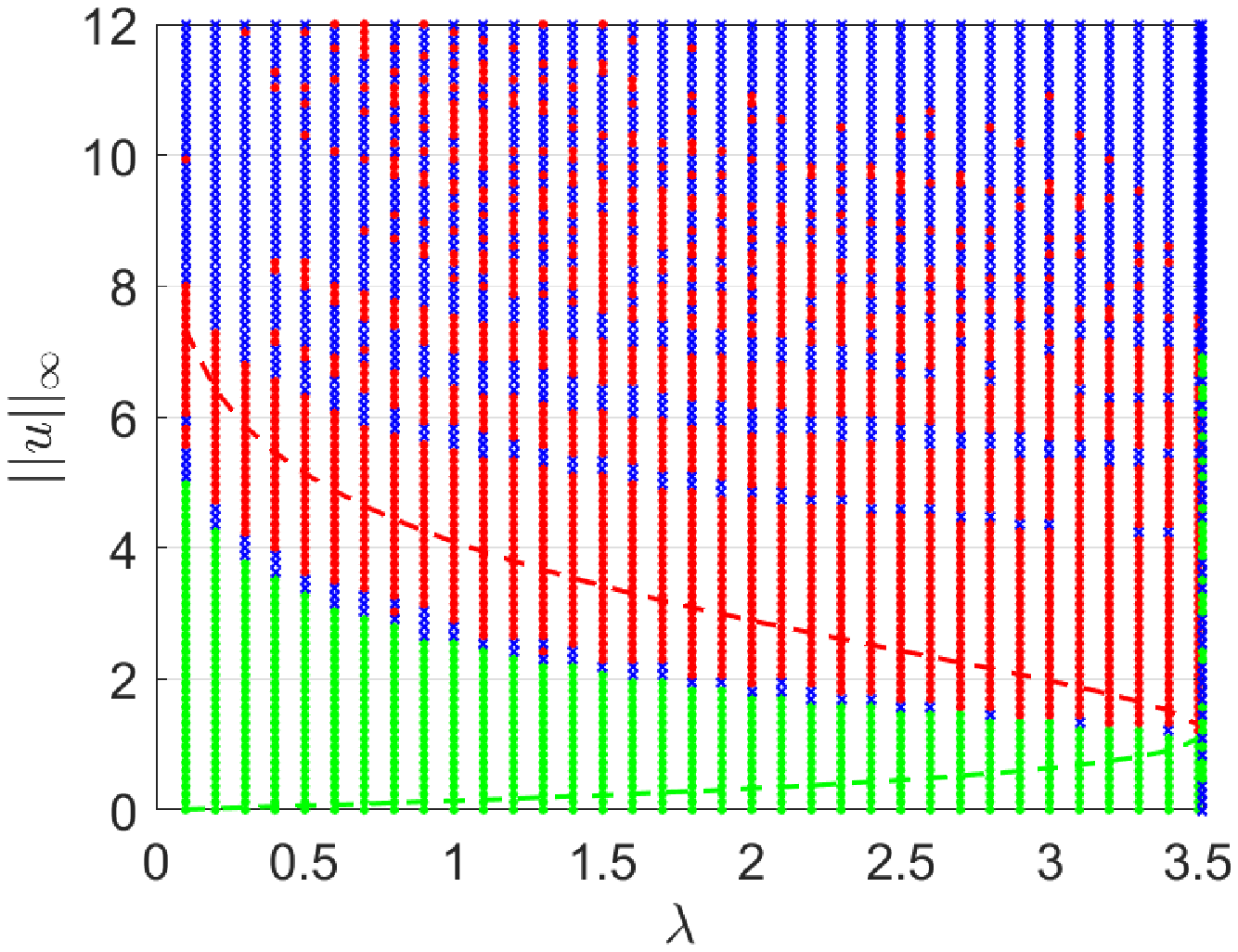}
}
~\subfigure[]{
    \includegraphics[width=0.45 \textwidth]{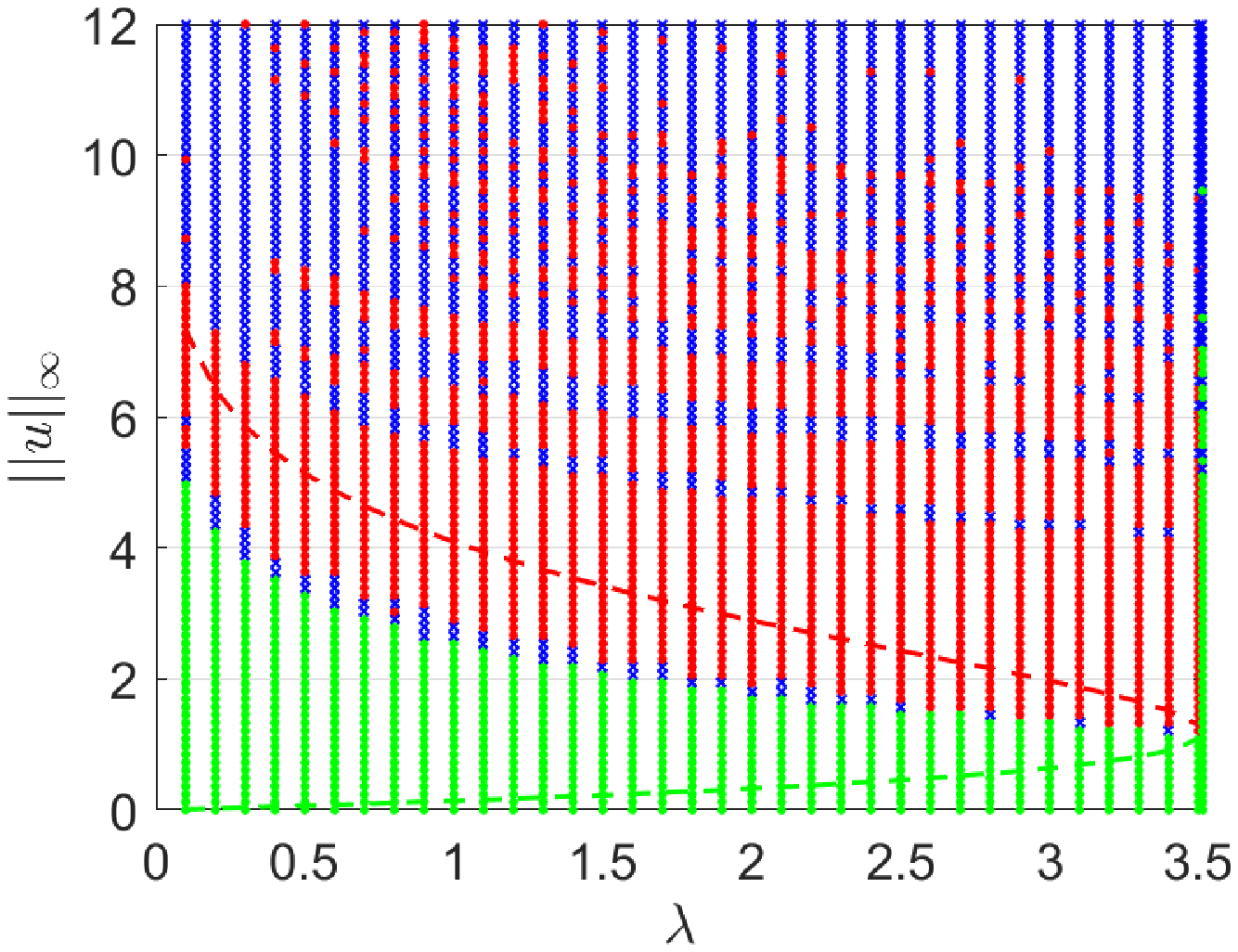}
}
~\subfigure[]{
    \includegraphics[width=0.45 \textwidth]{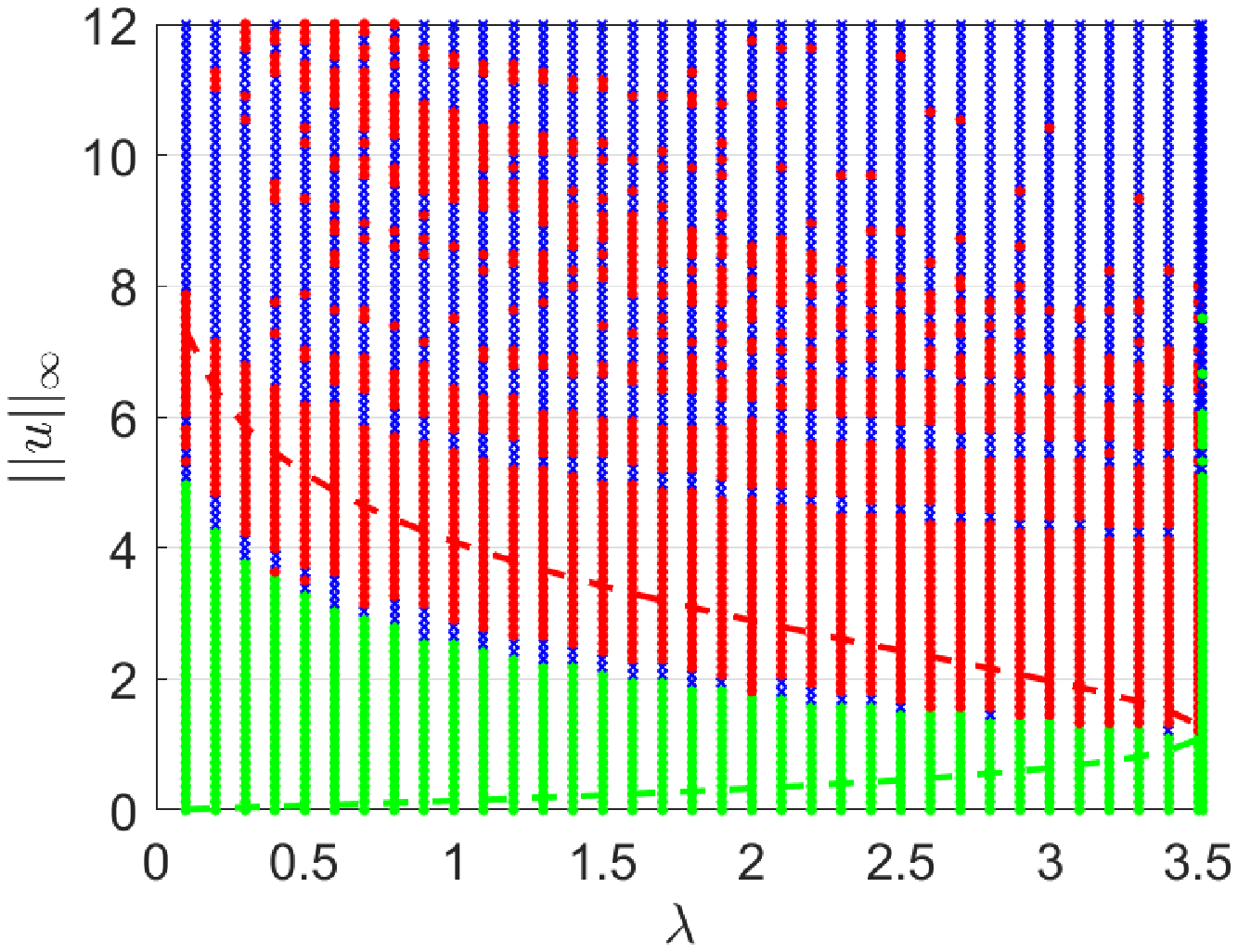}
}
~\subfigure[]{
    \includegraphics[width=0.45 \textwidth]{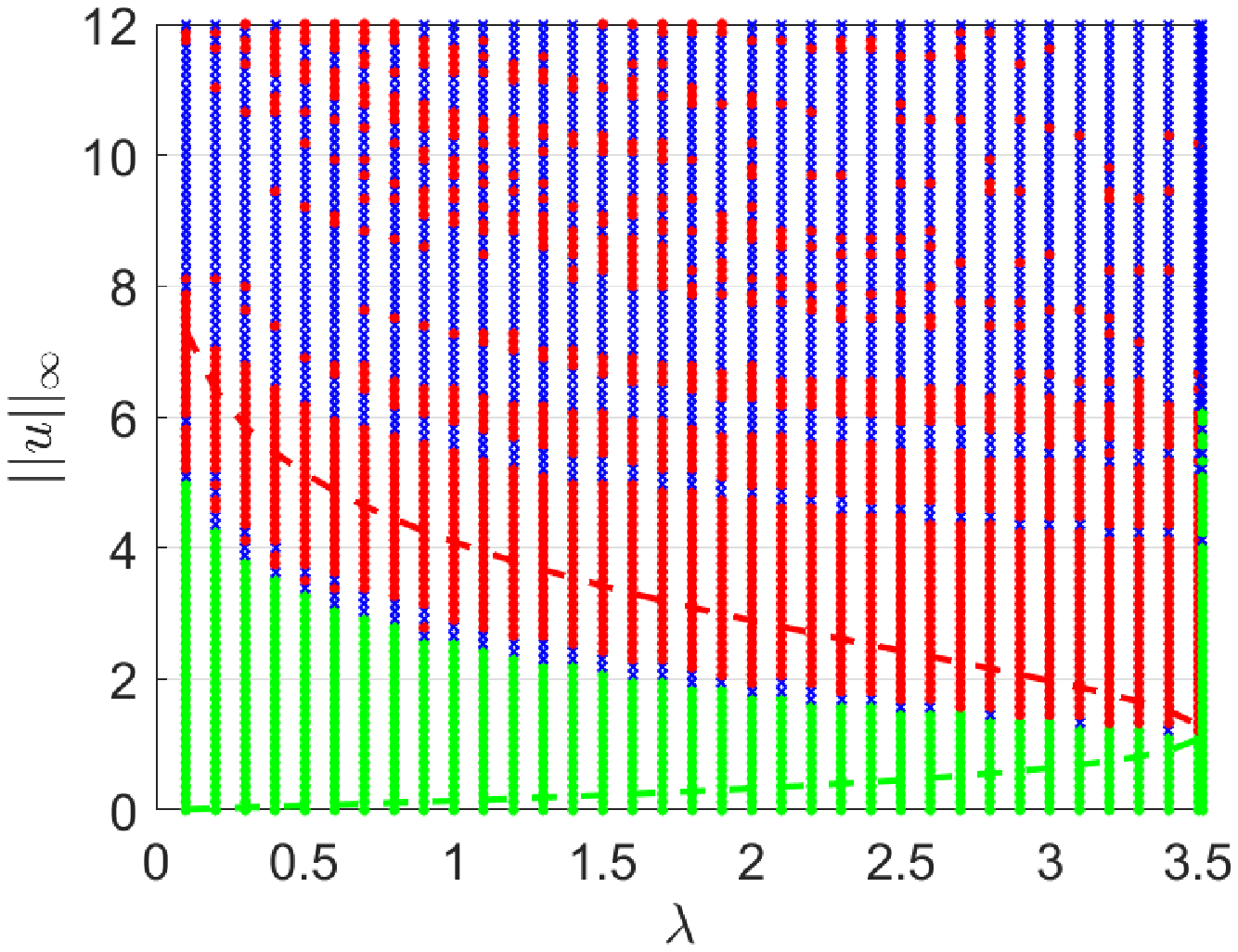}
}
\caption{Convergence regimes (basin of attraction) of Newton's method with the (a) FD, (b) FEM, and (c-d) ELM numerical schemes for the one-dimensional Bratu problem (\ref{eq:Bratu}) for a grid of initial  guesses ($L_{\infty}$--norms of parabolas that satisfy the boundary conditions (\ref{eq:Bratu:BC})) and $\lambda$s. Green points indicate convergence to the lower-branch solutions; Red points indicate convergence to the upper-branch solutions; Blue points indicate divergence. (c) ELM with logistic SF (\ref{eq:logisticSF}) (d) ELM with Gaussian RBF (\ref{eq:gaussianRBF}).}
\label{fig:conv_zone}
\end{figure}

\begin{remark}[Linearization of the equation for the numerical solution of the Liouville–Bratu–Gelfand problem]\label{remark:bratu_lin}

\begin{figure}[th]
    \centering
~\subfigure[]{
    \includegraphics[width=0.45 \textwidth]{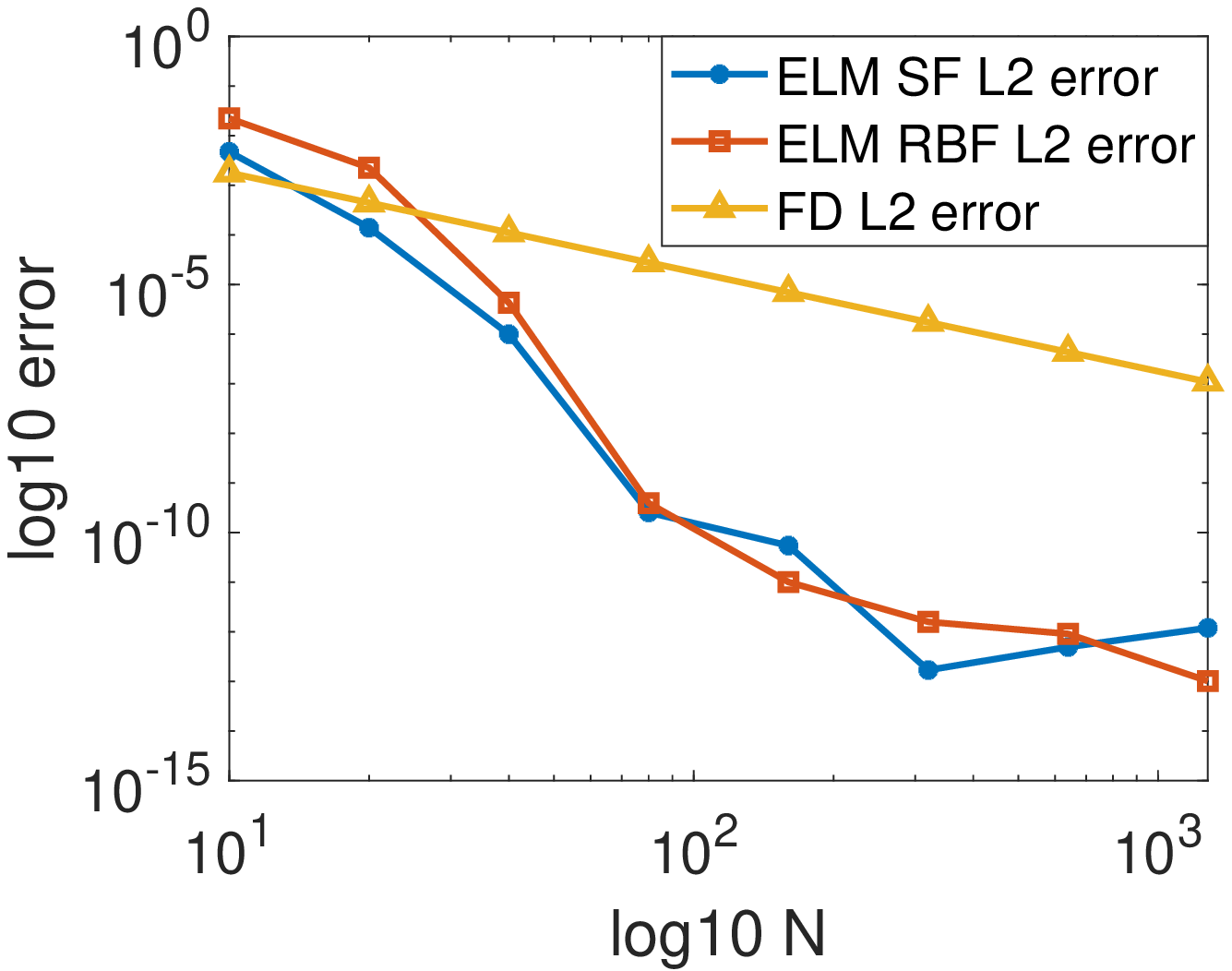}}
    ~\subfigure[]{
    \includegraphics[width=0.45 \textwidth]{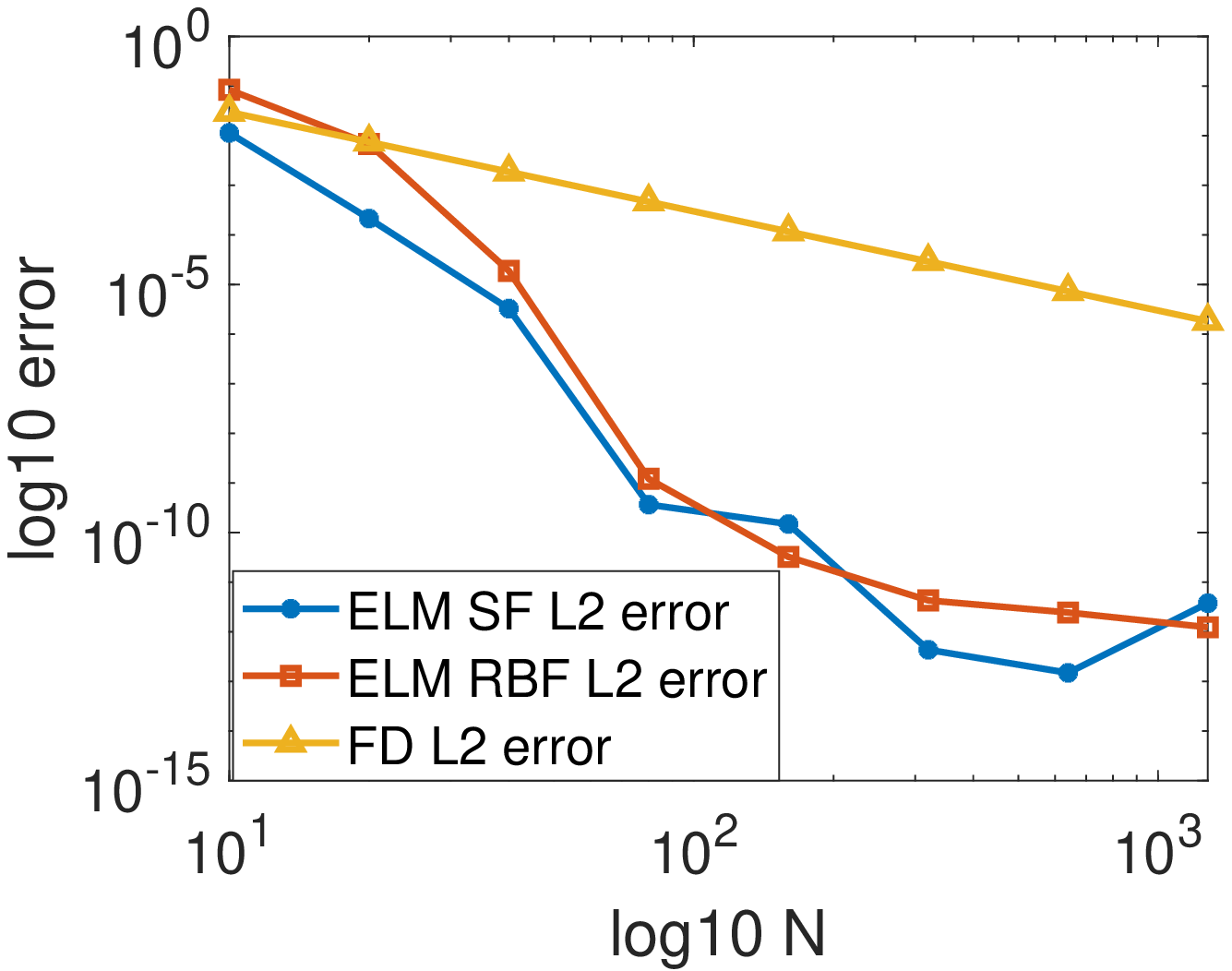}}
    
    \caption{Fixed point iterations: $L_2$--norm of the difference errors for the low and up branch Liouville–Bratu–Gelfand solution \eqref{eq:sys1} for $\lambda=2$:  (a) $L_2$ errors with respect to $N$ of the low branch solution (b) $L_2$ errors with respect to $N$ of the upper branch.}
    \label{fig:bratu_sol_fixedpoint_iteration}
\end{figure}

For the solution of the equation \eqref{eq:Bratu} with boundary conditions given by \eqref{eq:Bratu:BC}, one can consider the following iterative procedure that linearizes the equation:
\begin{equation*}
\left\{\begin{array}{l}
\text{Given }  u^{(0)}, \text{ do until convergence}    \\
\text{find } u^{(k)} \text{ such that } \Delta u^{(k)} +  \lambda e^{u^{(k-1)}}  u^{(k)} = \lambda ( u^{(k-1)}-1) e^{u^{(k-1)}} \ .
\end{array}\right.
\end{equation*}
In this way, the nonlinear term becomes a linear reaction term with a non-constant coefficient given by the evaluation of the nonlinearity at the previous step. Then, we implemented fixed point iterations until convergence. Such a linearization procedure is used, for example, in \cite{iqbal2020numerical}.
In Figure \ref{fig:bratu_sol_fixedpoint_iteration}, we report some results on the application of this method.
We note that this scheme converges more slowly and it is not so robust compared to Newton's method. 
\end{remark}


\subsubsection{Bifurcation diagram and numerical accuracy}
In this section, we report the numerical results obtained by the bifurcation analysis of the one-dimensional Bratu problem \eqref{eq:Bratu}. Figure \ref{fig:bratu_bifurcation} shows the constructed bifurcation diagram with respect to the parameter $\lambda$ and in Table \ref{table_bratu1D} we report  the accuracy of the computed value as obtained with FD, FEM and ELMS, versus the exact value of the turning point. As shown, the ELMs provide a bigger numerical accuracy for the value of the turning point for medium to large sizes of the grid, and equivalent results (the ELM with SF) to FEM, both outperforming the FD scheme.
\begin{table}[]
\begin{tabular}{lrrrr}
\hline
\hline
N & FD & FEM & ELM SF & ELM RBF\\
\hline
20  & -4.5737e-03 & 3.4410e-05 & 8.7618e-05  & 2.9953e-02  \\
50  & -7.3137e-04 & 8.4422e-07 & 2.9818e-07  & 6.6092e-05  \\
100 & -1.8282e-04 & 5.0597e-08 & -3.7086e-08 & 6.1302e-08  \\
200 & -4.5683e-05 & 2.3606e-08 & -4.5484e-09 & -2.6770e-09 \\
400 & -1.1412e-05 & 1.3557e-08 & 2.0169e-09  & 2.0275e-09 \\
\hline
\hline
\end{tabular}
\caption{One-dimensional Bratu problem (\ref{eq:Bratu}). Accuracy of FD, FEM and ELMS in the approximation of the value of the turning point with respect to the exact value $\lambda=3.513830719125162$. Values express the difference with the computed turning point and the exact one. The value of the turning point was estimated by fitting a parabola around the four points with the largest $\lambda$ values as obtained with arc-length continuation.}
\label{table_bratu1D}
\end{table}

\begin{figure}[ht]
    \centering
    \subfigure[]{
    \includegraphics[width=0.45 \textwidth]{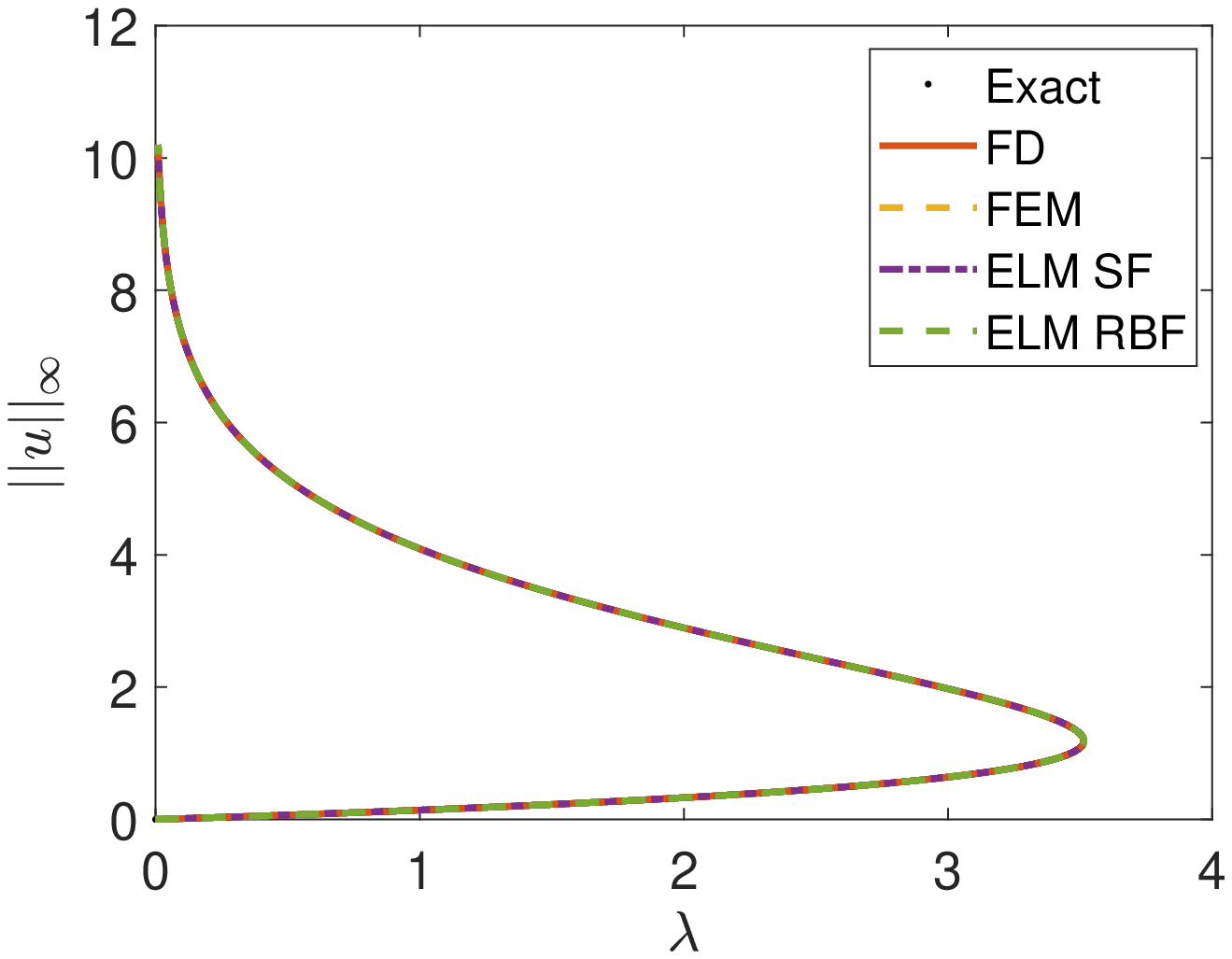}
    }
    \subfigure[]{
    \includegraphics[width=0.45 \textwidth]{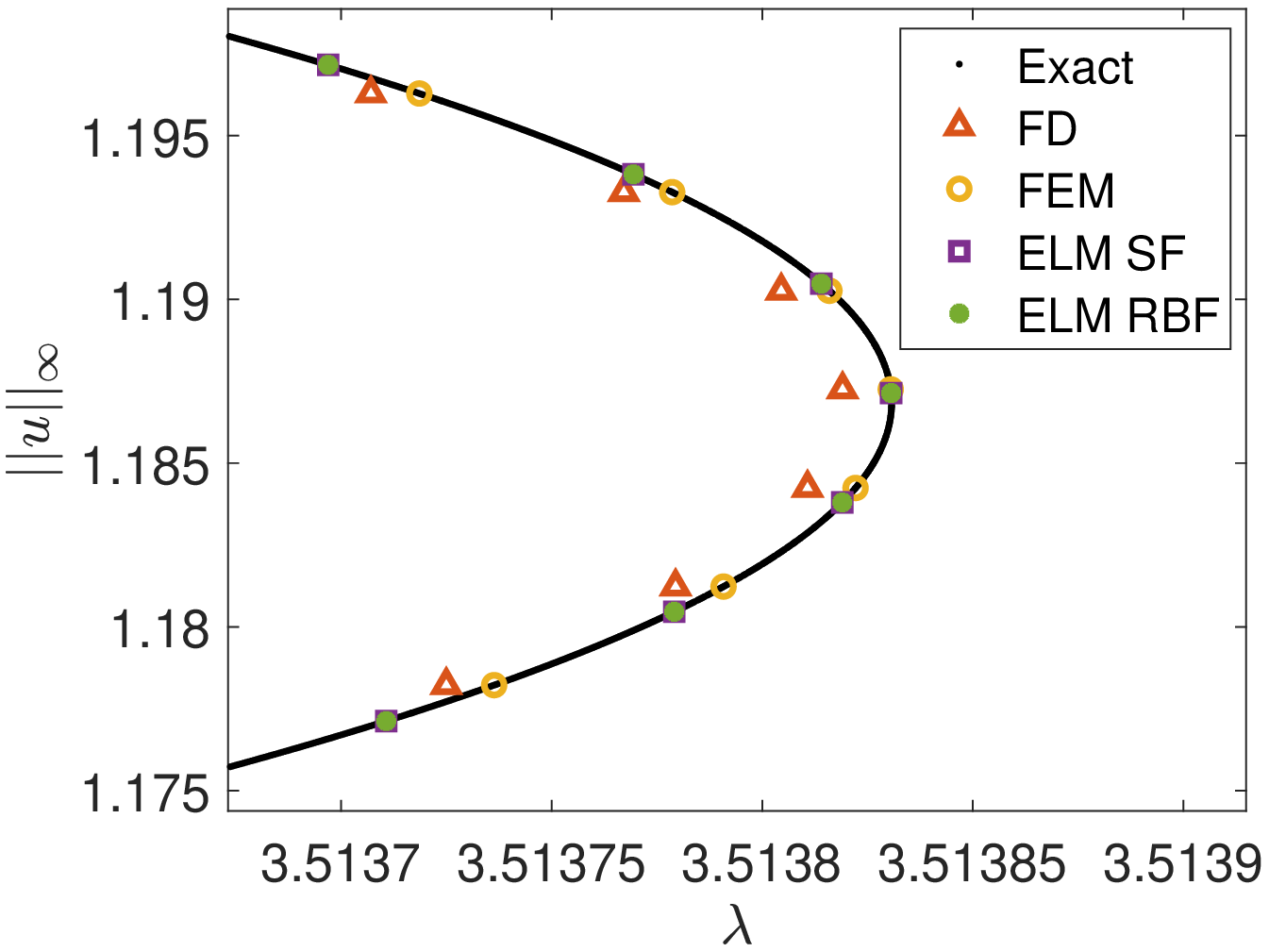}
    }
    \caption{(a) Bifurcation diagram for the one-dimensional Bratu problem (\ref{eq:Bratu}), with a fixed problem size $N=400$. (b) Zoom near the turning point.}
    \label{fig:bratu_bifurcation}
\end{figure}
In Figures \ref{fig:bratu_contourplot_down} and \ref{fig:bratu_contourplot_up}, we report the contour plots of the $L_{\infty}$--norms of the differences between the computed solutions by FD, FEM and ELMs and the exact solutions for the lower- (\ref{fig:bratu_contourplot_down}) and upper-branch (\ref{fig:bratu_contourplot_up}), respectively with respect to $N$ and $\lambda$.\par
\begin{figure}[ht]
    \centering
    \subfigure[]{
   \includegraphics[width=0.45 \textwidth]{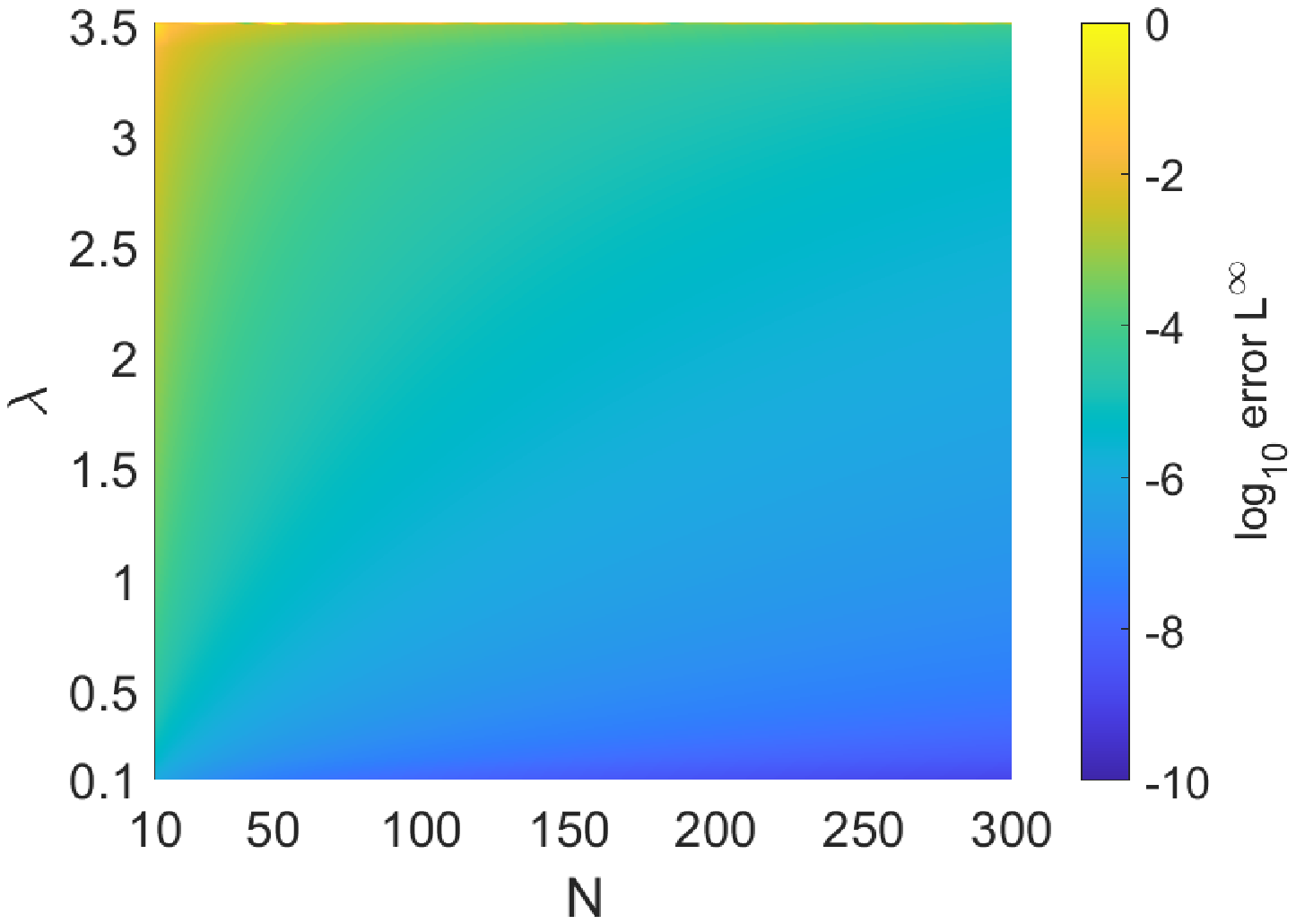}
}
~\subfigure[]{
    \includegraphics[width=0.45 \textwidth]{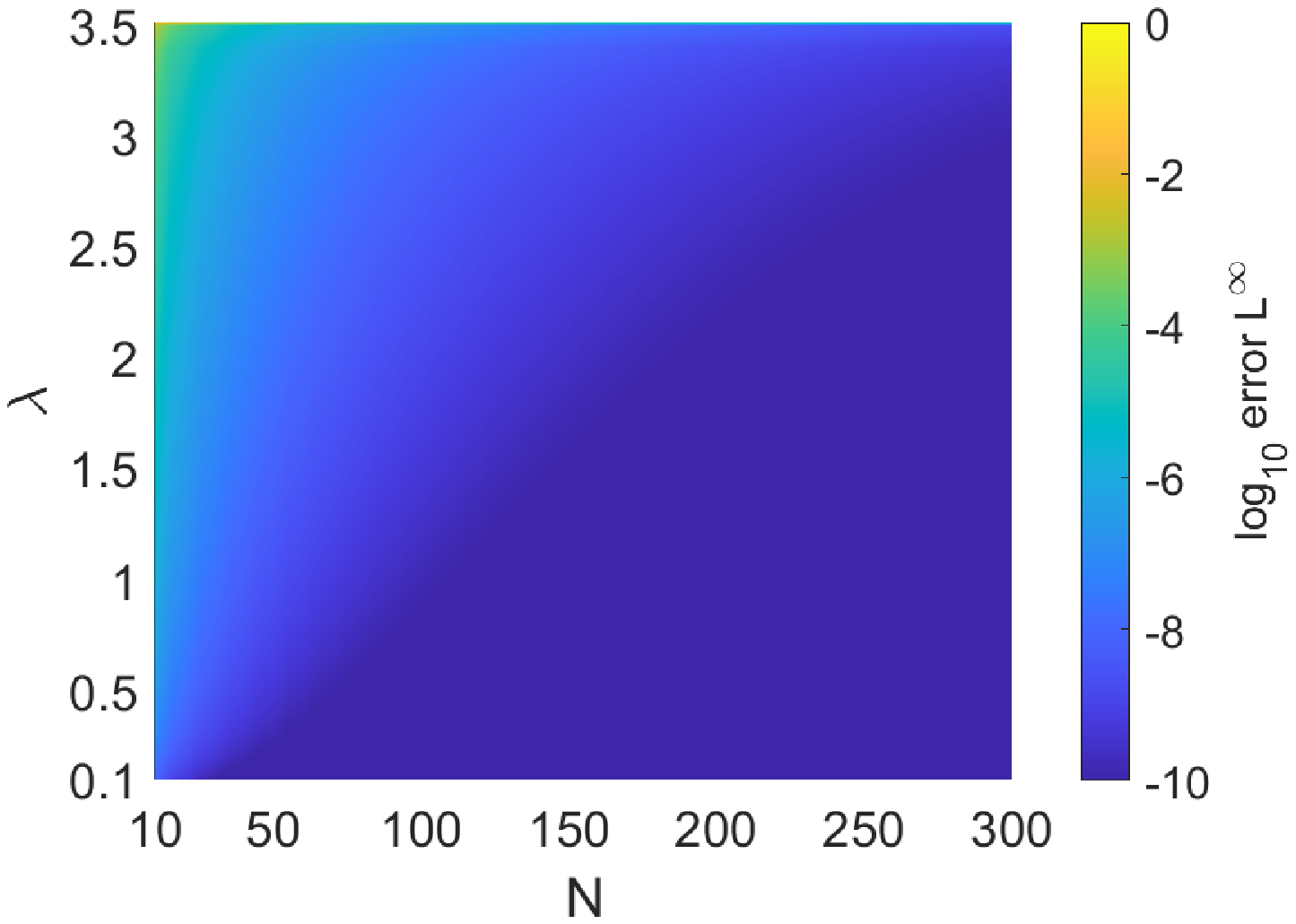}}
    \subfigure[]{
   \includegraphics[width=0.45 \textwidth]{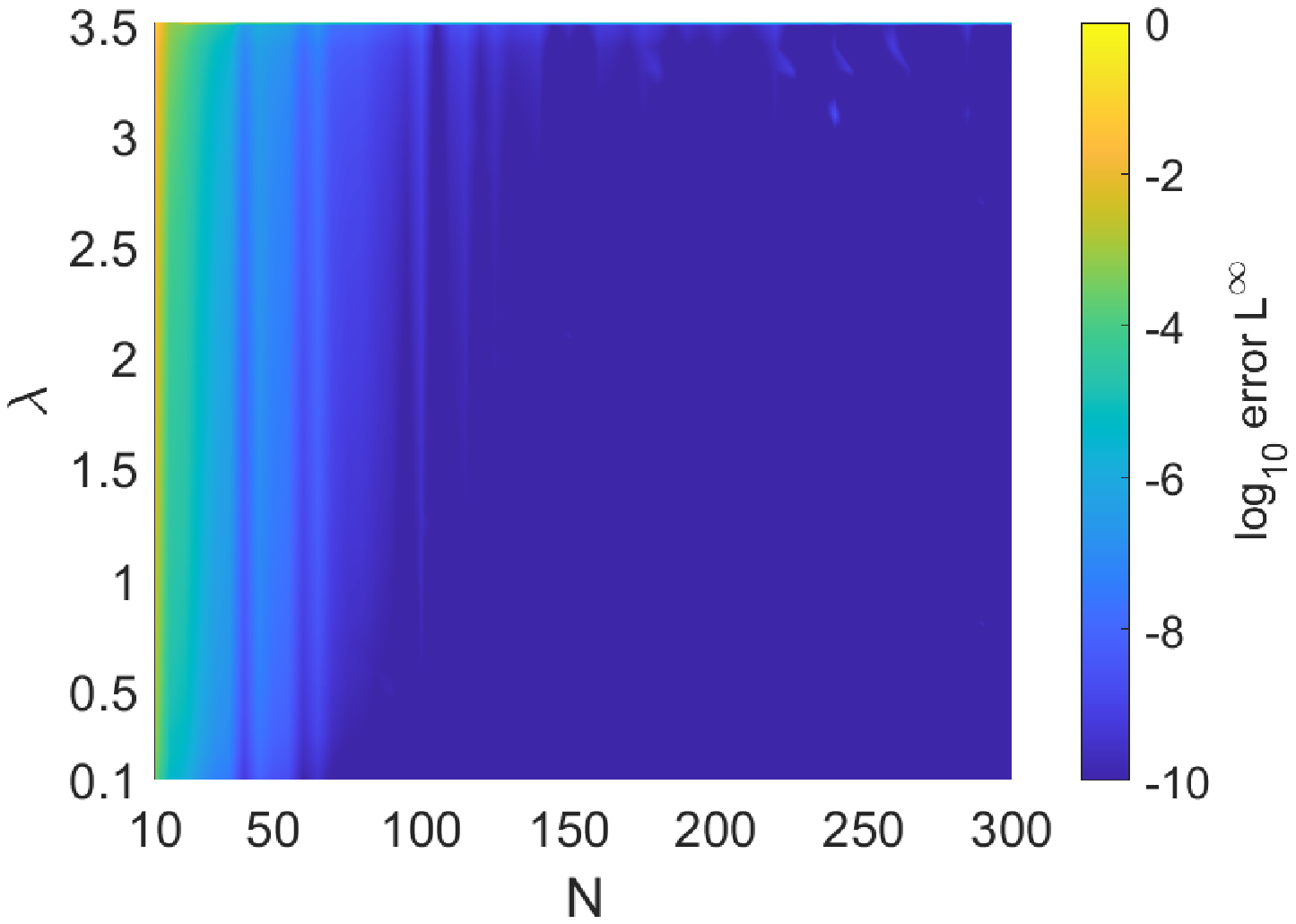}
}
~\subfigure[]{
    \includegraphics[width=0.45 \textwidth]{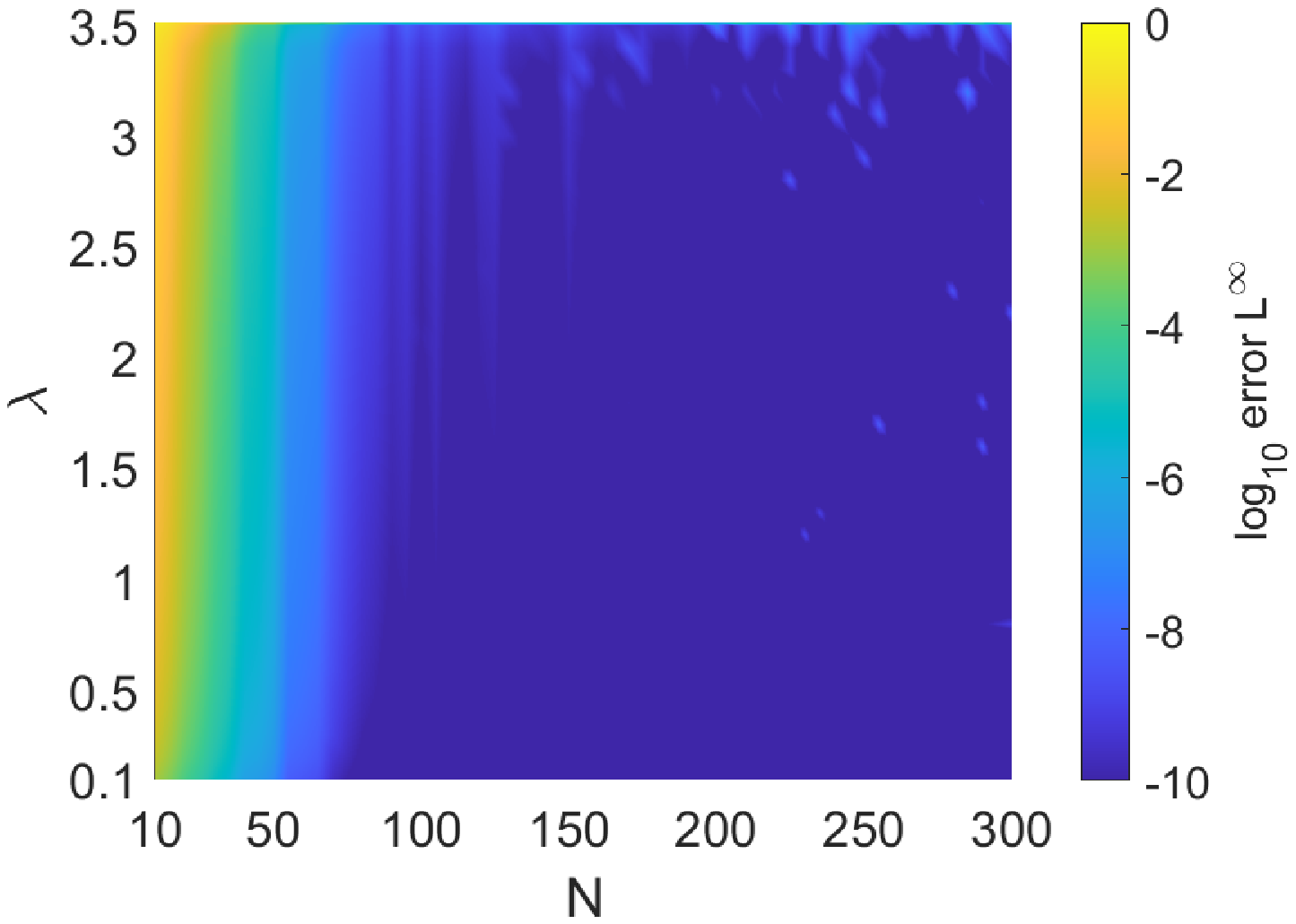}}
    \caption{One-dimensional Bratu problem (\ref{eq:Bratu}). Contour plots of the $L_{\infty}$--norms of the differences between the computed and exact \eqref{eq:sys1} solutions for the lower stable branch: (a) FD, (b) FEM, (c) ELM with logistic SF (\ref{eq:logisticSF}), (d) ELM with Gaussian RBF (\ref{eq:gaussianRBF}).}
    \label{fig:bratu_contourplot_down}
\end{figure}

\begin{figure}[ht]
    \centering
    \subfigure[]{
   \includegraphics[width=0.45 \textwidth]{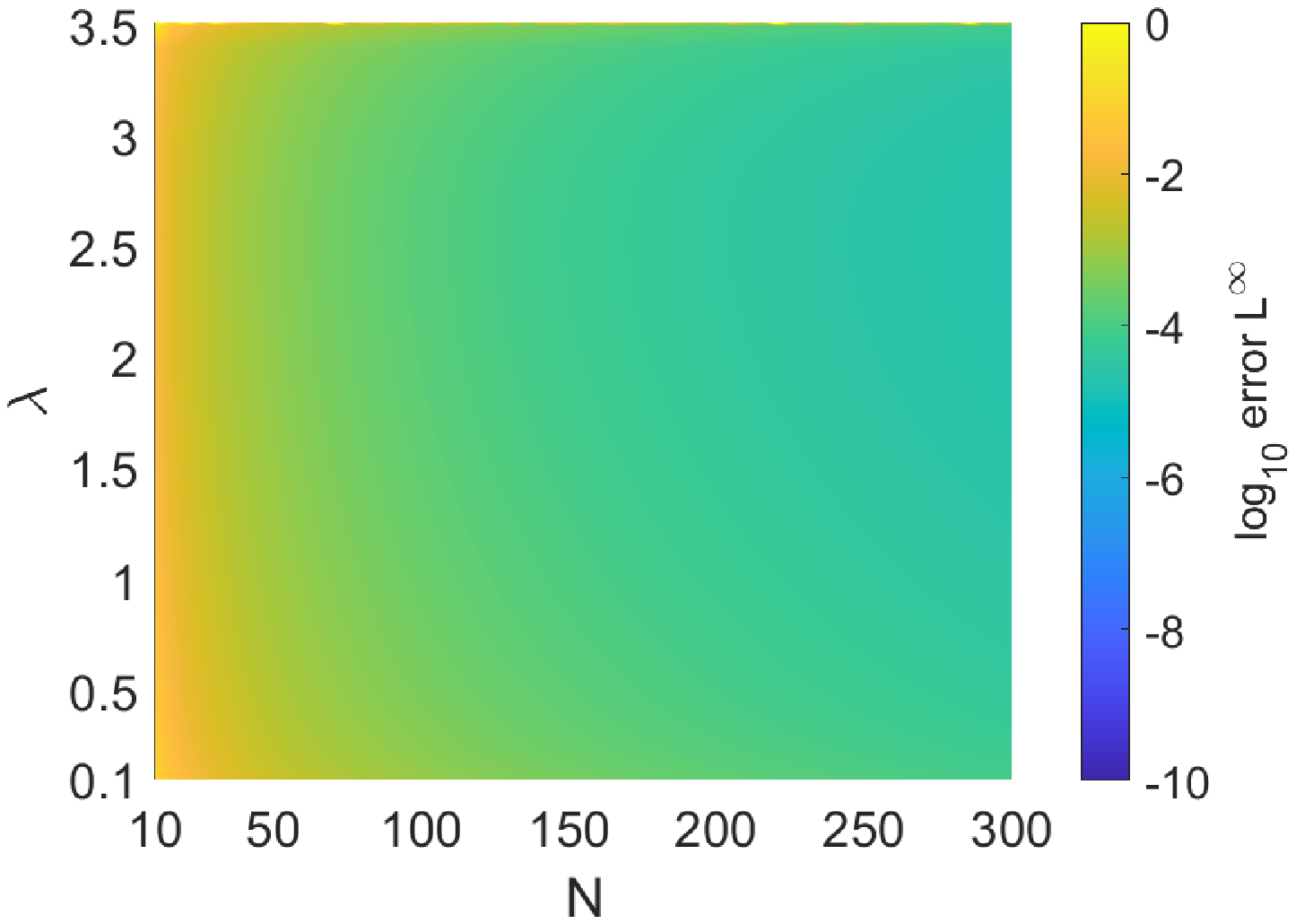}
}
~\subfigure[]{
    \includegraphics[width=0.45 \textwidth]{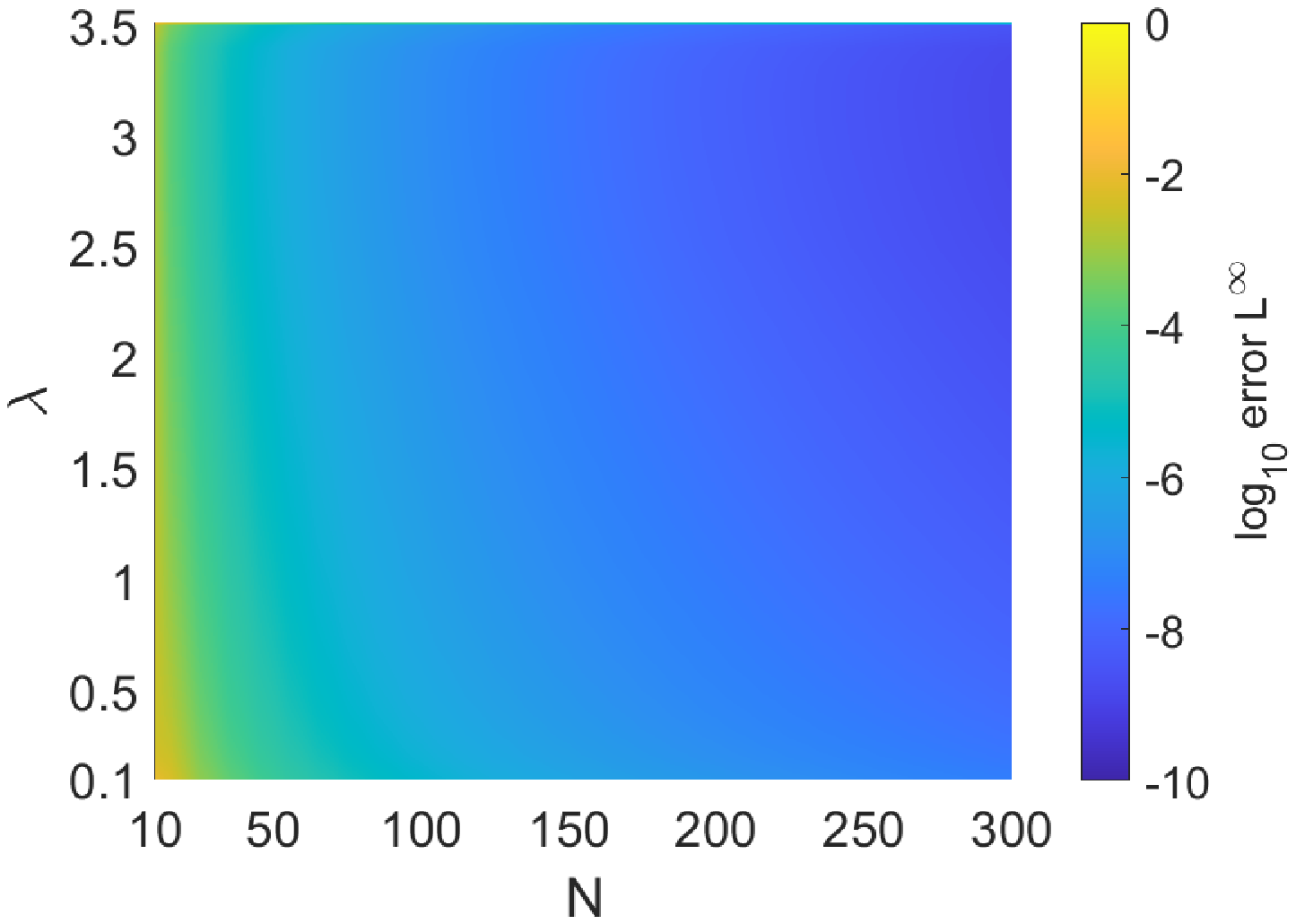}}
    \subfigure[]{
   \includegraphics[width=0.45 \textwidth]{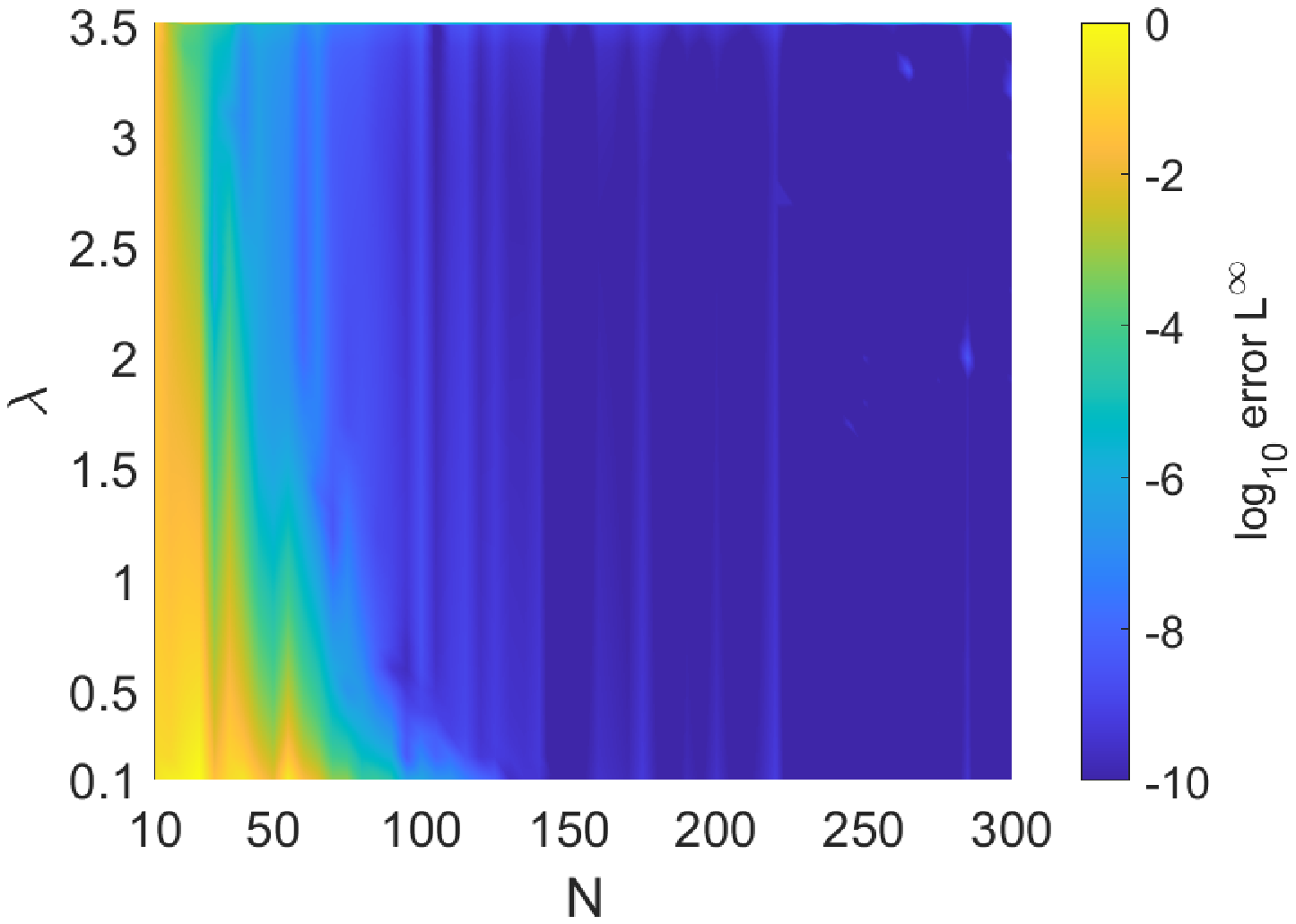}
}
~\subfigure[]{
    \includegraphics[width=0.45 \textwidth]{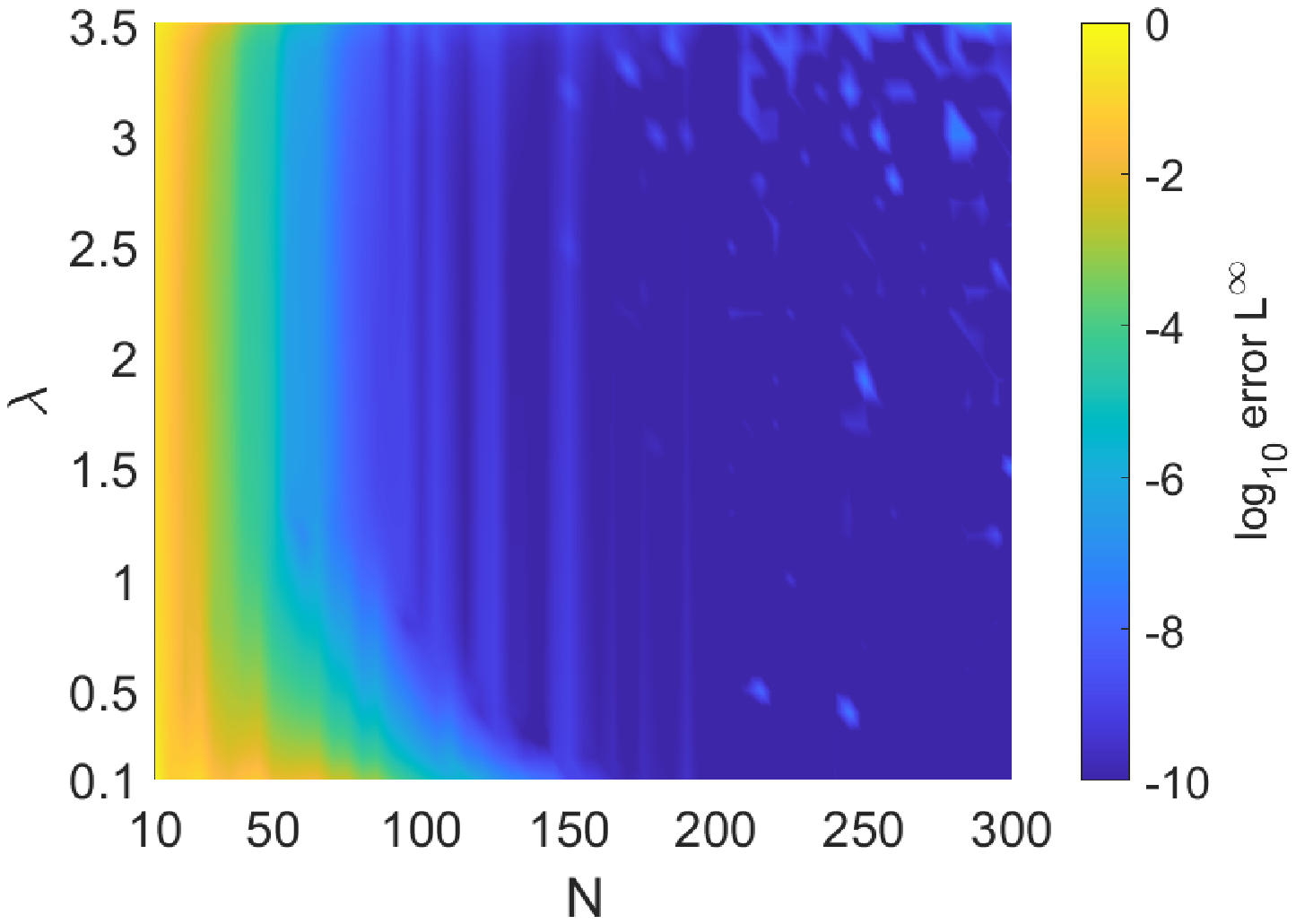}}
    \caption{One-dimensional Bratu problem (\ref{eq:Bratu}). Contour plots of the $L_{\infty}$--norms of the differences between the computed and exact \eqref{eq:sys1} solutions for the upper unstable branch: (a) FD, (b) FEM, (c) ELM with logistic SF (\ref{eq:logisticSF}), (d) ELM with Gaussian RBF (\ref{eq:gaussianRBF}).}
    \label{fig:bratu_contourplot_up}
\end{figure}

As it is shown, the ELM schemes outperform both FD and FEM methods for medium to large problem sizes $N$, and provide equivalent results with FEM for low to medium problem sizes, ths both (FEM and ELMs) outperforming the FD scheme.

\FloatBarrier
\subsubsection{Numerical results for the two-dimensional problem}
For the two-dimensional problem \eqref{eq:Bratu}-\eqref{eq:Bratu:BC}, no exact analytical solution is available. Thus, for comparing the numerical accuracy of the FD, FEM and ELM schemes, we considered the value of the bifurcation point that has been reported in key works as discussed in Section \ref{Sect:sez4.2}. Figure \ref{fig:bratu2D_bifurcation} depicts the computed bifurcation diagram as computed via pseudo-arc-length continuation (see section \ref{Sect:sez2}). Table 5, summarizes the computed values of the turning point as estimated with the FD, FEM and ELM schemes for various sizes $N$ of the grid.

\begin{figure}
    \centering
    \subfigure[]{
    \includegraphics[width=0.45 \textwidth]{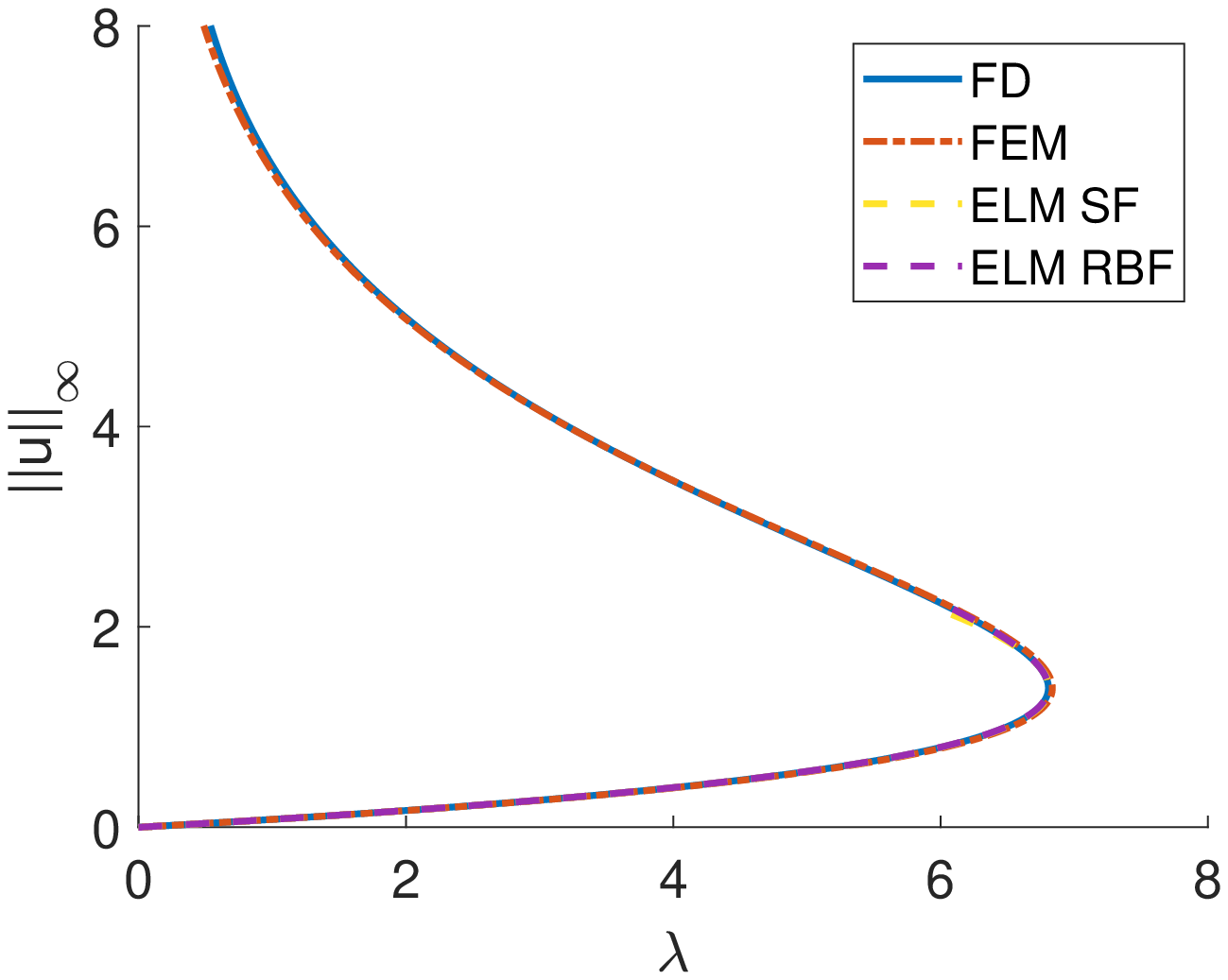}}
    \subfigure[]{\includegraphics[width=0.45 \textwidth]{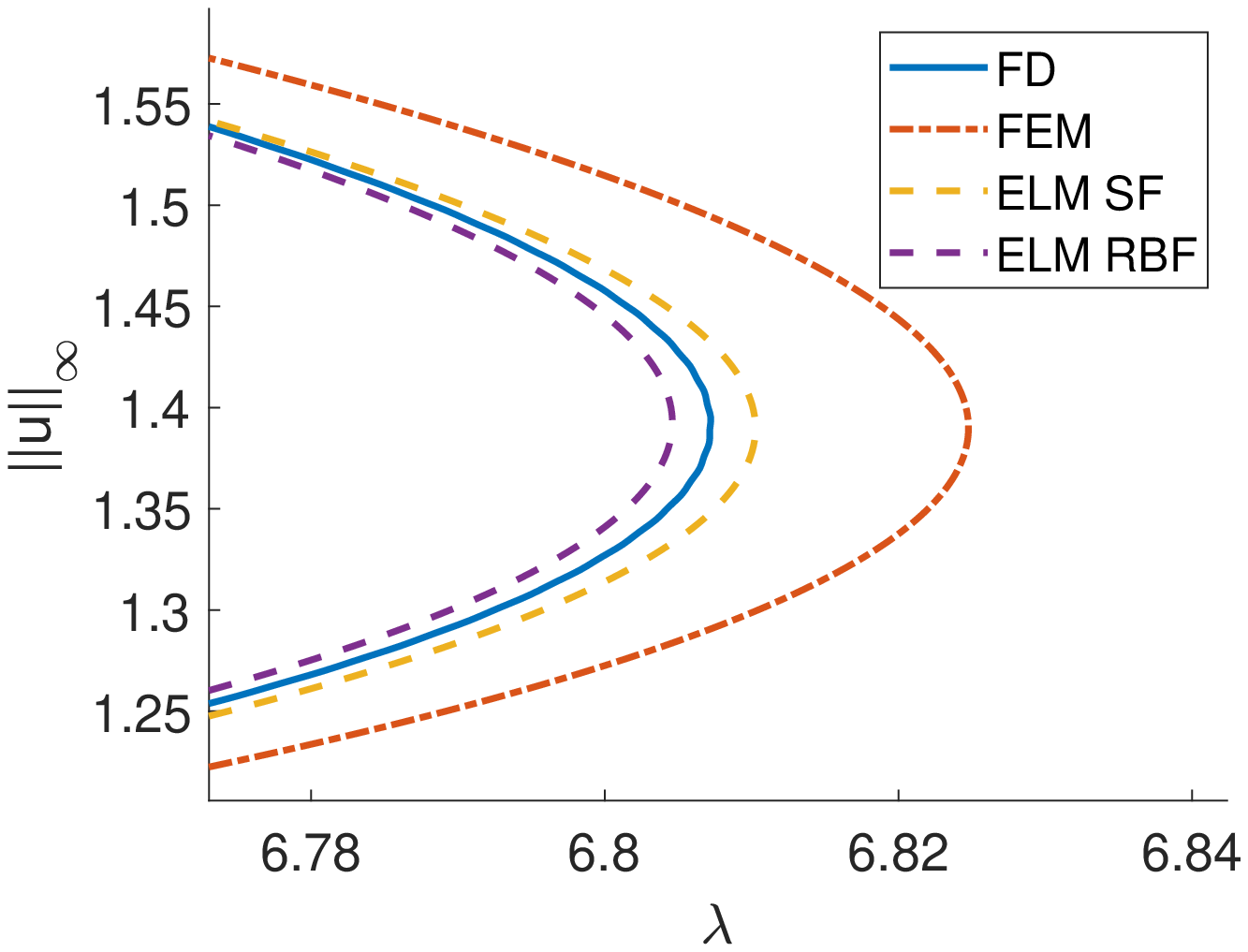}}
    \caption{(a) Computed bifurcation diagram for the two-dimensional Bratu problem (\ref{eq:Bratu}), with a grid of $40\times 40$ points. b) Zoom near the turning point.}
    \label{fig:bratu2D_bifurcation}
    
\end{figure}

\begin{table}[]
\begin{tabular}{ccrrrr}
\hline
\hline
N & Grid & FD & FEM & ELM SF & ELM RBF \\
\hline

64 & 8x8  & 6.783434 & 7.083742 & 6.845015 & 7.207203 \\
100 & 10x10  & 6.792626 & 6.984260 & 6.723902 & 6.930798 \\
196 & 14x14 & 6.800361 & 6.900313  & 6.855055 & 6.882435 \\
400 & 20x20 & 6.804392 & 6.856401 & 6.799440  & 6.829754 \\
784 & 28x28 & 6.806235 & 6.835771 & 6.801689 & 6.806149 \\
1600 & 40x40 & 6.807220 & 6.824770 & 6.806899 & 6.804600 \\
\hline
\hline
\end{tabular}
\caption{Turning point estimation of the two-dimensional Bratu problem. The value that has been reported in the literature in key works (see e.g. \cite{chan1982arc}) is $\lambda^*=6.808124$. The value of the turning point was estimated by fitting a parabola around the four points with the largest $\lambda$ values as obtained by the arc-length continuation.}
\end{table}

\begin{remark} [The Gelfand-Bratu model]\label{Remark:Bratu:Ball}
The Liouville–Bratu–Gelfand equation \eqref{eq:Bratu} in a unitary ball $B \subset \mathbb{R}^d $ with homogeneous Dirichlet boundary conditions is usually refereed as Gelfand-Bratu model. Such equation posses radial solutions $u({r})$ of the one-dimensional non-linear boundary-value problem \cite{syam2007modified}:
\begin{equation}
\left\{
    \begin{array}{rl}
        &u''(r)+ \dfrac{d-1}{r}u'(r)+ \lambda e^{u(r)} =0 \quad 0< r < 1  \\
        &u(1)=u'(0)=0 
    \end{array} \right.
    \label{eq:Bratu_radial}
\end{equation}
In the case $d=2$ this equation gives multiple solutions if $\lambda<\lambda_c=2$.
For example, in \cite{raja2013neural}, the authors have used Mathematica to give analytical solutions at various values of $\lambda$; for our tests we consider:
\begin{equation}
    \begin{array}{ll}
      \lambda=\frac{1}{2}  \,\rightarrow\   & u(r)=\log\left( \dfrac{ 16\left(7+4\sqrt{3}\right) }{\left( 7+4\sqrt{3}+ r^2 \right)^2  }\right) \\
      \lambda=1 \,\rightarrow\  & u(r)=\log\left( \dfrac{ 8\left(3+2\sqrt{2}\right) }{\left(3+2\sqrt{2}+ r^2 \right)^2  }\right) \ . \\
      \end{array}
      \label{eq:mathematica_solutions}
\end{equation}
Figure \ref{Fig:Bratu_2d_BALL} depicts the numerical accuracy of the ELM collocation schemes with respect to the exact solutions for two values of $\lambda$, namely for $\lambda=1/2$ and for $\lambda=1$. Because no meshing procedure is involved, and because the collocation equation seeks no other point, the implementation of the Newton's method is straightforward when changing the geometry of the domain.
\end{remark}
\begin{figure}[ht]
    \centering
    \subfigure[]{
   \includegraphics[width=0.45 \textwidth]{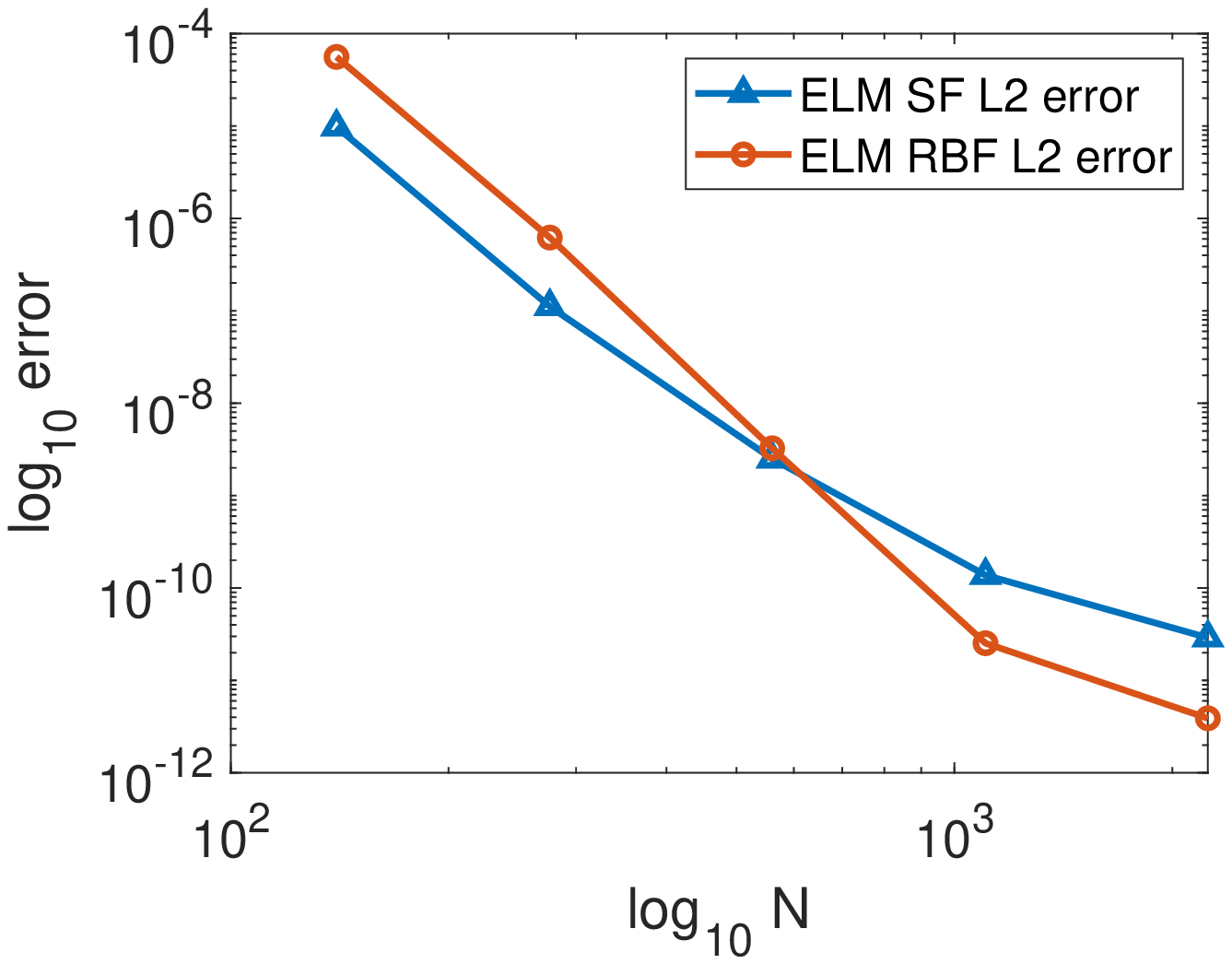}
}
~\subfigure[]{
    \includegraphics[width=0.45 \textwidth]{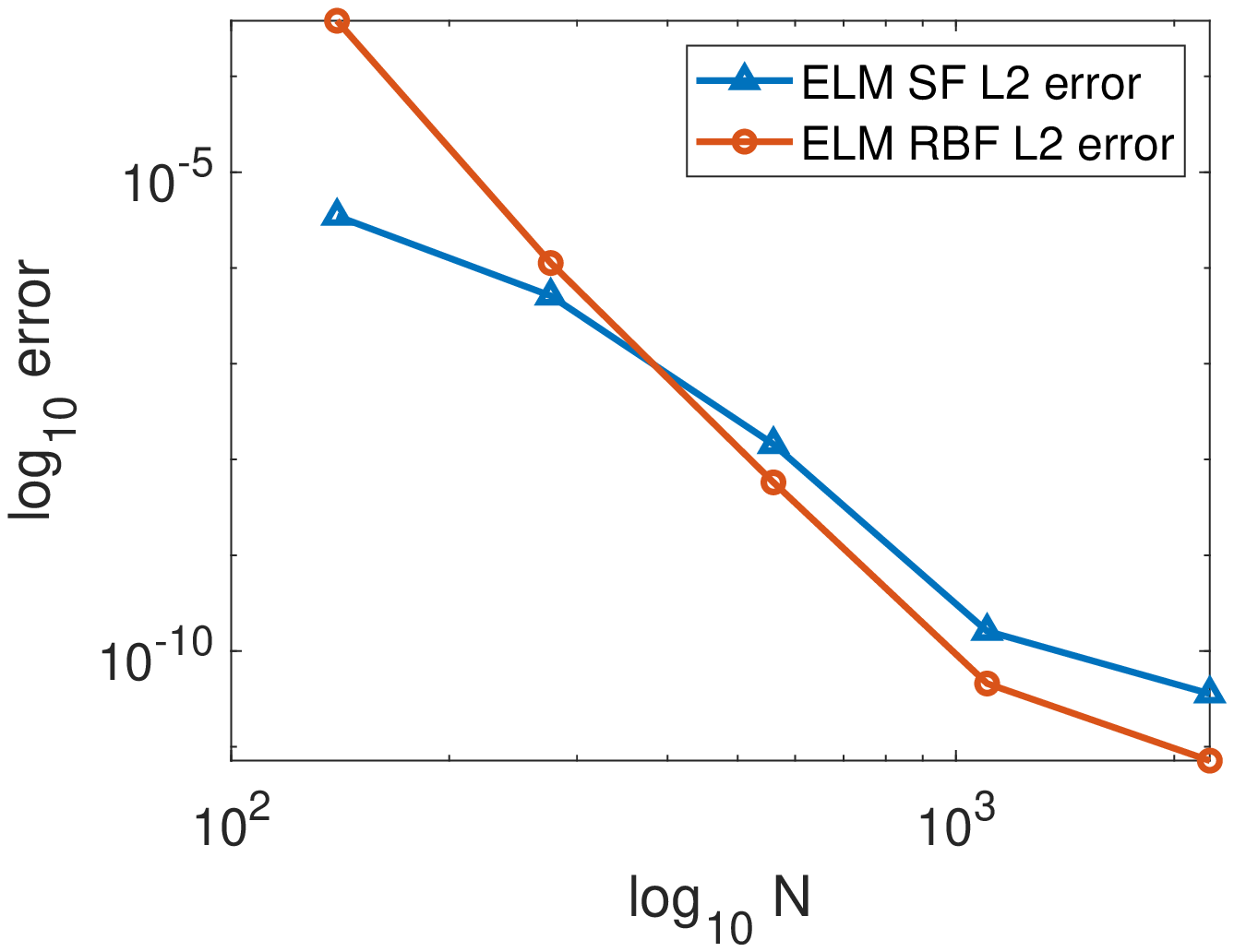}
}
\caption{Numerical accuracy of ELMs for the radial two-dimensional Gelfand-Bratu problem (\ref{eq:Bratu_radial}). $L_2$--norm of differences of the analytical solutions (\ref{eq:mathematica_solutions}) w.r.t. the number of neurons $N$ in ELMs with both logistic SF (\ref{eq:logisticSF}) and Gaussian RBF (\ref{eq:gaussianRBF}): (a) $\lambda=1/2$, (b) $\lambda=1$.}
\label{Fig:Bratu_2d_BALL}
\end{figure}
\FloatBarrier
\section{Conclusions}
We proposed a numerical approach based on Extreme Learning Machines (ELMs) and collocation for the approximation of steady-state solutions of non-linear PDEs. The proposed scheme takes advantage of the property of the ELMs as universal function approximators, bypassing the need of the computational very expensive - and most-of-the times without any guarantee for convergence of-the training phase of other types of machine learning such as single or multilayer ANNs and Deep-learning networks. The base of the approximation subspace on which a solution of the PDE is sought are the (unknown) weights of the hidden to output layer. For linear PDEs, these can be computed by solving a linear regularization problem in one step. In our previous work \cite{calabro2020extreme}, we demonstrated that ELMs can provide robust and accurate approximations of the solution of benchmark linear PDEs with steep gradients, for which analytical solutions were available. Here, building on this work, we make a step change by showing how ELMs can be used to solve non-linear PDEs, and by bridging them with continuation methods, we show how one can exploit the arsenal of numerical bifurcation theory to trace branches of solutions past critical points. For our demonstrations, we considered two celebrated classes of nonlinear PDEs whose solutions bifurcate as parameter values change: the one-dimensional viscous Burgers equation (a fundamental representative of advection-diffusion PDEs) and the one- and two-dimensional Liouville–Bratu–Gelfand equation (a fundamental representative of reaction-diffusion PDEs). By coupling the proposed numerical scheme with Newton-Raphson iterations and the ``pseudo" arc-length continuation method, we constructed the corresponding bifurcation diagrams past turning points. The efficiency of the proposed numerical ELM collocation scheme was compared against two of the most established numerical solution methods, namely central Finite Differences and Galerkin Finite Elements. By doing so, we showed that (for the same problem size) the proposed machine-learning approach outperforms FD and FEM schemes for relatively medium to large sizes of the grid, both with respect to the accuracy of the computed solutions for a wide range of the bifurcation parameter values and the approximation accuracy of the turning points. Hence, the proposed approach arises as an alternative and powerful new numerical technique for the approximation of steady-state solutions of non-linear PDEs. Furthermore, its implementation is far simpler than the implementation of FEM, thus providing equivalent or even better numerical accuracy, and in all cases is shown to outperform the simple FD scheme, which fails to approximate steep gradients as here arise near the boundaries.
Of course there are many open problems linked to the implementation of the proposed scheme that ask for further and deeper investigation, such as the theoretical investigation of the impact of the type of transfer functions and the probability distribution of their parameter values functions to the approximation of the solutions. Further directions could be towards the extension of the scheme for the solution of time-dependent non-linear PDEs as well as the solution of inverse-problems in PDEs.

\section*{Acknowledgments}
Francesco Calabr\`o and Constantinos Siettos were partially supported by INdAM, through GNCS research projects. This support is gratefully acknowledged.

\bibliographystyle{spmpsci}
\bibliography{references}  


%



\end{document}